\documentclass[11pt,a4paper]{article}
\usepackage[latin1]{inputenc}
\usepackage{amsmath,amscd}
\usepackage{amssymb,latexsym,amsthm}
\usepackage{amsfonts}
\usepackage{color}
\usepackage[spanish,english]{babel}
\usepackage{makeidx}
\usepackage{graphicx}

\usepackage{amsmath,amsthm,amscd}
\usepackage[latin1]{inputenc}
\usepackage{lscape}
\usepackage{fancyhdr}
\usepackage{amsfonts}
\usepackage{pb-diagram}

\usepackage[all]{xy}

\usepackage{multicol}
\usepackage{eufrak}
\usepackage{amscd}
\usepackage{mathrsfs}

\usepackage[pdftex,colorlinks,a4paper,bookmarks=true,bookmarksnumbered=true,linkcolor=blue,citecolor=red]{hyperref}

\numberwithin{equation}{section}
\newtheorem{theorem}{Theorem}[section]

\newtheorem{definition}[theorem]{Definition}
\newtheorem{proposition}[theorem]{Proposition}

\newtheorem{conjecture}[theorem]{Conjecture}
\newtheorem{corollary}[theorem]{Corollary}

\theoremstyle{definition}

\newtheorem{example}[theorem]{Example}

\newtheorem{remark}[theorem]{Remark}

\def\N{\mathbb{N}}

\def\Z{\mathbb{Z}}
\def\Zn{\mathbb{Z}^n}

\def\Re{\mathbb{R}}
\def\Rn{\mathbb{R}^{n}}

\def\O{\mathcal{O}}
\def\q{\textbf{\textit{q}}}

\def\Oq{\O_{\q}}

\def\M{\mathscr{M}}

\def\F{\mathscr{F}}

\def\m{\mathcal{\textbf{m}}}

\def\Rq{R_{\q,\sigma}[x_1^{\pm 1},\ldots,x_r^{\pm 1},x_{r+1},\ldots, x_n]}
\def\Qrn{Q^{r,n}_{\q,\sigma}}
\def\Qnn{Q^{n,n}_{\q,\sigma}}
\def\Qnnq{Q^{n,n}_{\widetilde{\q},\widetilde{\sigma}}}

\title{\textbf{VALUATIONS OF SKEW QUANTUM POLYNOMIALS}}
\author{Cristian Arturo Chaparro Acosta\\
\texttt{crachaparroac@unal.edu.co}\\
\footnotesize Seminario de Álgebra Constructiva - SAC$^2$.\\
\footnotesize Departamento de Matemáticas.\\
\footnotesize Universidad Nacional de Colombia.\\
 \footnotesize Sede Bogotá.}
\date{}
\begin{document}
\maketitle
\begin{abstract}
In this paper we extend some results obtained by Artamonov and Sabitov for quantum polynomials to skew quantum polynomials and quasi$-$commutative bijective skew PBW extensions. Moreover, we find a counterexample to the conjecture proposed in \cite{A}.\\


{\footnotesize \textbf{Keywords:} Skew $PBW$ extensions, skew quantum polynomials, Ore domains, valuations, completions.}
\end{abstract}

\tableofcontents

\newpage
\section{Introduction}\label{intro}
This section is divided into three subsections, we recall the definition of $\Gamma-$valuation, valuation and quantum polynomials. We review some fundamental properties of valuations and valuations of quantum polynomials (see \cite{A4} and \cite{A}).

\subsection{Valuations}\label{val}
Let $D$ be a division ring, $D^*$ the multiplicative group and $\Gamma$ is a totally ordered group (with additive notation and not necessarily commutative).

\begin{definition}
A function $\nu:D^*\rightarrow \Gamma$ is a $\Gamma-$valuation of $D^*$ if:
\begin{enumerate}
	\item[i)] $\nu$ is surjective,

	\item[ii)] $\nu\left(ab\right)=\nu\left(a\right)+\nu\left(b\right)$,

	\item[iii)] $\nu\left(a+b\right)\geq \min \left\{ \nu\left(a\right),\nu\left(b\right)\right\} $.
\end{enumerate}
\end{definition}


\begin{proposition}\cite{S,F2}\label{propGV}
If $\nu$ is a $\Gamma-$valuation of $D^*$, then:
\begin{enumerate}
\item[1)] If $\nu\left(a\right)\neq\nu\left(b\right)$, then $\nu \left( a+b \right)=\min$$\left\{\nu\left(a\right),\nu\left(b\right)\right\} $.

\item[2)] $\Lambda_{\nu}:=\left\{ a\in D;\, a=0\,\text{ or }\,\nu\left(a\right)\geq0\right\} $ is a subring of $D$.

\item[3)] The group of units $\mathcal{U}_{\nu}:=\left\{ a\in D^{*};\nu\left(a\right)=0\right\} $ is a subgroup of $D^*$.

\item[4)] $\mathcal{W}_{\nu}:=\left\{ a\in D,a=0\,\text{ or }\,\nu\left(a\right)>0\right\} $ is a completely prime ideal of $\Lambda_{\nu}$ and $\mathcal{W}_{\nu}=\Lambda_{\nu}-\mathcal{U}_{\nu}$.

\item[5)] $\Lambda_{\nu}$ is a local ring with unique maximal ideal $\mathcal{W}_{\nu}$.
\end{enumerate}
\end{proposition}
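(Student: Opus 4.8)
The plan is to prove the five statements in the order in which they are listed, since each is used in the next, after recording a handful of preliminary identities that reduce every step to routine bookkeeping. Applying ii) to $1=1\cdot 1$ gives $\nu(1)=0$; from $\nu(-1)+\nu(-1)=\nu(1)=0$ together with the fact that a totally ordered group is torsion-free one gets $\nu(-1)=0$, hence $\nu(-a)=\nu(a)$ for every $a\in D^{*}$; and from $\nu(a)+\nu(a^{-1})=\nu(1)=0$ one gets $\nu(a^{-1})=-\nu(a)$.

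For 1) I would assume without loss of generality $\nu(a)<\nu(b)$. Axiom iii) already gives $\nu(a+b)\ge\nu(a)$, so it suffices to exclude a strict inequality: if $\nu(a+b)>\nu(a)$, then writing $a=(a+b)+(-b)$ and applying iii) together with $\nu(-b)=\nu(b)$ yields $\nu(a)\ge\min\{\nu(a+b),\nu(b)\}>\nu(a)$, a contradiction; hence $\nu(a+b)=\nu(a)=\min\{\nu(a),\nu(b)\}$.

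For 2), 3) and 4): one has $1\in\Lambda_{\nu}$ because $\nu(1)=0$; if $a,b\in\Lambda_{\nu}$ are nonzero with $a-b\ne 0$ then iii) and $\nu(-b)=\nu(b)$ give $\nu(a-b)\ge\min\{\nu(a),\nu(b)\}\ge0$, while ii) gives $\nu(ab)=\nu(a)+\nu(b)\ge0$, so $\Lambda_{\nu}$ is a subring. The same computations with $\ge0$ replaced by $>0$ show that $\mathcal{W}_{\nu}$ is an additive subgroup, and since $\nu(ra)=\nu(r)+\nu(a)>0$ and $\nu(ar)=\nu(a)+\nu(r)>0$ whenever $\nu(r)\ge0$ and $\nu(a)>0$, it is a two-sided ideal of $\Lambda_{\nu}$; it is completely prime because in a division ring a product of nonzero elements is nonzero and $\nu(a)+\nu(b)=\nu(ab)>0$ with $\nu(a),\nu(b)\ge0$ forces $\nu(a)>0$ or $\nu(b)>0$. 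The equality $\mathcal{W}_{\nu}=\Lambda_{\nu}-\mathcal{U}_{\nu}$ is simply the remark that an element of $\Lambda_{\nu}$ has valuation $\ne0$ precisely when it is $0$ or has valuation $>0$. Finally $\mathcal{U}_{\nu}$ is a subgroup of $D^{*}$ by $\nu(1)=0$, $\nu(ab)=\nu(a)+\nu(b)$, and $\nu(a^{-1})=-\nu(a)$.

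For 5) I would identify $\mathcal{U}_{\nu}$ with the group of units of the ring $\Lambda_{\nu}$: an element $a\in\Lambda_{\nu}$ is invertible inside $\Lambda_{\nu}$ iff $a^{-1}\in\Lambda_{\nu}$, i.e. iff $\nu(a)\ge0$ and $-\nu(a)=\nu(a^{-1})\ge0$, i.e. iff $\nu(a)=0$; by 4) the set of non-units of $\Lambda_{\nu}$ is then the two-sided ideal $\mathcal{W}_{\nu}$, which is exactly the assertion that $\Lambda_{\nu}$ is a local ring with $\mathcal{W}_{\nu}$ as its unique maximal ideal. The only step that is not pure formal manipulation is 1): its proof rests on $\nu(-1)=0$, which in the present ordered-group setting relies on $\Gamma$ having no torsion, and this is the point I would be most careful about; everything else follows from ii) and iii) alone and is untouched by noncommutativity, since at no stage are two elements of $D$ commuted.
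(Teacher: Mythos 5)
Your proof is correct, and it is the standard argument: the paper itself gives no proof of Proposition \ref{propGV}, citing \cite{S,F2} instead, and your chain of preliminary identities ($\nu(1)=0$, $\nu(-1)=0$ via torsion-freeness of a totally ordered group, $\nu(a^{-1})=-\nu(a)$) followed by the decomposition $a=(a+b)+(-b)$ for part 1) and the identification of the units of $\Lambda_{\nu}$ with $\mathcal{U}_{\nu}$ for part 5) is exactly how those references proceed. The only cosmetic point worth adding is that in 1) the hypothesis $\nu(a)\neq\nu(b)$ already guarantees $a+b\neq 0$ (otherwise $b=-a$ and $\nu(b)=\nu(a)$), so $\nu(a+b)$ is defined.
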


\subsection{Valuations with values on $\Gamma\cup\{\infty \}$}

\begin{proposition}
Let $\Gamma$ be a totally ordered group with additive notation ordere. Then $\Gamma\cup \{\infty \}$ is an ordered additive monoid such that
$$x+\infty:=\infty=:\infty+x,\, \text{ for all } \Gamma\cup\{\infty\},$$ 
and $\infty>x$ for all $x\in \Gamma$.
\end{proposition}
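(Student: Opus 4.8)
The plan is to verify directly the monoid axioms and the order axioms for the extended structure, treating $\infty$ as an absorbing element. First I would make the operation completely explicit: for $x,y\in\Gamma$ the sum $x+y$ is the one already available in $\Gamma$, while in every remaining case (i.e.\ whenever $x=\infty$ or $y=\infty$, including $x=y=\infty$) we set $x+y:=\infty$. This is visibly well defined and restricts to the operation of $\Gamma$. Since $\Gamma$ is not assumed commutative, throughout the argument one keeps the left and right versions of each statement separate; the absorbing rule $x+\infty=\infty=\infty+x$ is symmetric, so this costs nothing.

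Next I would check that $(\Gamma\cup\{\infty\},+)$ is a monoid. Associativity is a finite case analysis on how many of the three arguments equal $\infty$: if none does, it is the associativity already holding in $\Gamma$; if at least one does, then both $(a+b)+c$ and $a+(b+c)$ collapse to $\infty$, because any sum in which $\infty$ occurs equals $\infty$. The neutral element is still $0\in\Gamma$: for $x\in\Gamma$ it is the identity inherited from $\Gamma$, and $0+\infty=\infty=\infty+0$ by definition.

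Then I would extend the order by keeping $\le$ on $\Gamma$ and declaring $x<\infty$ for every $x\in\Gamma$ (so that $\infty\le\infty$). Reflexivity, antisymmetry, transitivity and totality each reduce to the corresponding property in $\Gamma$ together with the trivial observations that nothing is strictly greater than $\infty$ and that $\infty$ is comparable with everything; this exhibits $\Gamma\cup\{\infty\}$ as a totally ordered set with $\infty$ as greatest element, which gives the asserted $\infty>x$ for all $x\in\Gamma$.

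Finally I would verify order-compatibility of the operation, i.e.\ that $a\le b$ implies $a+c\le b+c$ and $c+a\le c+b$. Once more by cases: if $c=\infty$ both sides equal $\infty$; if $b=\infty$ then $b+c=\infty$ and $c+b=\infty$, which dominate $a+c$ and $c+a$ respectively; and if $b\in\Gamma$ and $c\in\Gamma$, then from $a\le b<\infty$ we get $a\in\Gamma$, so all three lie in $\Gamma$ and compatibility is inherited from the ordered group $\Gamma$. This establishes that $\Gamma\cup\{\infty\}$ is an ordered additive monoid with the stated properties. There is essentially no obstacle: the whole argument is bookkeeping over finitely many cases. The one point worth flagging is that strict compatibility is \emph{lost} — for instance $a<b$ but $a+\infty=b+\infty$ — so ``ordered monoid'' here must be read with the non-strict compatibility $a\le b\Rightarrow a+c\le b+c$, which is exactly what one needs when using $\Gamma\cup\{\infty\}$ as the value set of a valuation with $\nu(0)=\infty$.
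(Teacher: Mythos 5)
Your verification is correct and complete: the paper states this proposition without proof, treating it as a routine fact, and your case-by-case check of the monoid axioms, the extension of the total order with $\infty$ as greatest element, and the (non-strict) compatibility $a\le b\Rightarrow a+c\le b+c$, $c+a\le c+b$ is exactly the standard argument that fills it in. Your closing observation that strict compatibility fails (e.g.\ $a<b$ yet $a+\infty=b+\infty$), so ``ordered monoid'' must be read with non-strict monotonicity, is a worthwhile precision that the paper leaves implicit and that is indeed what is used when taking $\Gamma\cup\{\infty\}$ as the value set of a valuation with $\nu(0)=\infty$.
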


\begin{definition}[\cite{Ch2}]
Let $R$ be a ring. By a valuation on $R$ with values in a totally ordered group $\Gamma$, the value group, we shall understand a function $\nu$ on $R$ with values in $\Gamma\cup\{\infty \}$ subject to the conditions:
\begin{enumerate}
	\item[i)] $\nu(a)\in\Gamma\cup\{\infty \}$ and $\nu$ assumes at least two values,

	\item[ii)] $\nu\left(ab\right)=\nu\left(a\right)+\nu\left(b\right)$,

	\item[iii)] $\nu\left(a+b\right)\geq \min \left\{ \nu\left(a\right),\nu\left(b\right)\right\} $.
\end{enumerate}
\end{definition}



\begin{proposition}\cite{Ch2, F2}
If $\nu$ is a valuation of $R$, then:
\begin{enumerate}
\item[1)] $ker\, \nu:=\{a\in R; \nu(a)=\infty\}$ is an ideal of $R$.
\item[2)] If $\nu(a+b)>\min\{\nu(a),\nu(b)\}$, then $\nu(a)=\nu(b)$.

\item[3)] $\Lambda_{\nu}:=\left\{ a\in R;\, \nu\left(a\right)\geq0\right\} $ is a subring of $R$.

\item[4)] The group of units $\mathcal{U}_{\nu}:=\left\{ a\in R^{*};\nu\left(a\right)=0\right\} $ is a subgroup of $R^*$.

\item[5)] $\mathcal{W}_{\nu}:=\left\{ a\in R,\nu\left(a\right)>0\right\} $ is an ideal of $\Lambda_{\nu}$.

\item[6)] $ker\, \nu$ is a completely prime ideal of $R$ and $R/ker\,\nu$ is an integral domain.
\end{enumerate}
\end{proposition}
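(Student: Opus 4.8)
The plan is to verify all six items directly from the defining conditions (i)--(iii), after first extracting the basic normalisations $\nu(0)=\infty$, $\nu(1)=0$ and $\nu(-1)=0$. For $\nu(0)$: from (ii), $\nu(0)=\nu(0\cdot a)=\nu(0)+\nu(a)$ for every $a\in R$; were $\nu(0)$ an element of $\Gamma$ we could cancel it (recall $\Gamma$ is a group) and get $\nu(a)=0$ for all $a$, contradicting (i); hence $\nu(0)=\infty$. Likewise $\nu(1)=\nu(1\cdot 1)=2\nu(1)$ forces $\nu(1)=0$ (the value $\infty$ is excluded since $\nu(1)+\nu(a)=\nu(a)$ would again trivialise $\nu$), and then $2\nu(-1)=\nu(1)=0$ gives $\nu(-1)=0$, so $\nu(-a)=\nu(-1)+\nu(a)=\nu(a)$ for all $a$. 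Two elementary facts about the ordered monoid $\Gamma\cup\{\infty\}$ will be used repeatedly: a sum of two elements equals $\infty$ if and only if one of them is $\infty$ (because $\Gamma$ is closed under addition), and the sum of a non-negative element with a positive (resp. non-negative) element is positive (resp. non-negative).

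For item (2) I would argue by contraposition, reproducing the standard ultrametric computation: assuming $\nu(a)\neq\nu(b)$, say $\nu(a)<\nu(b)$, write $a=(a+b)+(-b)$ and apply (iii) together with $\nu(-b)=\nu(b)$ to obtain $\nu(a)\geq\min\{\nu(a+b),\nu(b)\}$; since $\nu(a)<\nu(b)$ this forces $\nu(a+b)\leq\nu(a)$, while (iii) gives $\nu(a+b)\geq\min\{\nu(a),\nu(b)\}=\nu(a)$, hence $\nu(a+b)=\nu(a)=\min\{\nu(a),\nu(b)\}$, contradicting $\nu(a+b)>\min\{\nu(a),\nu(b)\}$.

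Items (1), (3), (5) are then immediate closure checks. For $\ker\nu$: it contains $0$, is closed under addition by (iii) (the minimum of $\infty$ and $\infty$ is $\infty$), closed under negation since $\nu(-a)=\nu(a)$, and absorbs multiplication because $\nu(ra)=\nu(r)+\nu(a)=\infty=\nu(a)+\nu(r)=\nu(ar)$ whenever $\nu(a)=\infty$; it is proper since $1\notin\ker\nu$. For $\Lambda_{\nu}$: it contains $0$ and $1$, is closed under addition and negation by (iii) and $\nu(-a)=\nu(a)$, and closed under multiplication because $\nu(a)\geq 0$, $\nu(b)\geq 0$ imply $\nu(ab)=\nu(a)+\nu(b)\geq 0$. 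For $\mathcal{W}_{\nu}$: closure under addition and negation is as before, and for $a\in\mathcal{W}_{\nu}$, $r\in\Lambda_{\nu}$ the relations $\nu(ra)=\nu(r)+\nu(a)>0$ and $\nu(ar)=\nu(a)+\nu(r)>0$ (non-negative plus positive) show $\mathcal{W}_{\nu}$ is a two-sided ideal of $\Lambda_{\nu}$. For item (4), if $a\in R^*$ has $\nu(a)=0$ then $\nu(a^{-1})=0$ because $\nu(a)+\nu(a^{-1})=\nu(1)=0$ (a unit cannot have value $\infty$, else $\nu(1)=\infty$), so $\mathcal{U}_{\nu}$ is closed under inversion; it is closed under products since $\nu(ab)=0$, and contains $1$, hence is a subgroup of $R^*$.

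Finally, item (6): if $ab\in\ker\nu$ then $\nu(a)+\nu(b)=\nu(ab)=\infty$, and by the observation above about $\Gamma\cup\{\infty\}$ one of $\nu(a),\nu(b)$ equals $\infty$, i.e. $a\in\ker\nu$ or $b\in\ker\nu$; thus $\ker\nu$ is completely prime, and consequently $R/\ker\nu$ is a ring with $1\neq 0$ (as $\ker\nu$ is proper) having no zero divisors, i.e. an integral domain. The only place requiring a little care — rather than a genuine obstacle — is being explicit that it is the group structure of $\Gamma$ that underpins both the normalisations in the first paragraph and the "no zero divisors" dichotomy in item (6); everything else is formal manipulation of (i)--(iii).
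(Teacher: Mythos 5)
Your proof is correct: the normalisations $\nu(0)=\infty$, $\nu(1)=0$, $\nu(-1)=0$ are properly derived from condition (i) (at least two values) via cancellation in $\Gamma$, and all six items then follow by the standard ultrametric and closure arguments, including the key observation that a sum in $\Gamma\cup\{\infty\}$ is $\infty$ only if a summand is, which settles complete primeness of $\ker\nu$. Note that the paper itself gives no proof of this proposition, citing \cite{Ch2,F2} instead; your argument is precisely the standard one found in those references, so there is nothing to reconcile.
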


\begin{proposition}[\cite{Ch2}]
If $\nu$ is a $\Gamma-$valuation of $D$. Then $\Gamma$ is abelian, if and only if $\nu(a)=0$ for all $a \in [D^*,D^*]$.
\end{proposition}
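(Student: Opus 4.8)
The plan is to use only axiom (ii) from the definition of a $\Gamma$-valuation: the map $\nu\colon D^*\to\Gamma$ is a group homomorphism from $(D^*,\cdot)$ onto $(\Gamma,+)$, surjectivity being axiom (i). The ultrametric inequality (iii) plays no role whatsoever here. First I would record the two elementary normalizations $\nu(1)=0$ and $\nu(a^{-1})=-\nu(a)$, which follow immediately from (ii) applied to $1\cdot 1$ and to $a\cdot a^{-1}$.

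For the implication $\Rightarrow$, assume $\Gamma$ is abelian. For any $a,b\in D^*$, axiom (ii) together with $\nu(a^{-1})=-\nu(a)$ gives
\[
\nu(aba^{-1}b^{-1})=\nu(a)+\nu(b)-\nu(a)-\nu(b),
\]
and the right-hand side is $0$ since $\Gamma$ is commutative. Because $[D^*,D^*]$ is generated by the commutators $aba^{-1}b^{-1}$ (and the inverse of a commutator is again a commutator), every element of $[D^*,D^*]$ is a finite product of commutators, so $\nu$ sends it to a finite sum of zeros; hence $\nu\equiv 0$ on $[D^*,D^*]$.

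For the implication $\Leftarrow$, assume $\nu(a)=0$ for all $a\in[D^*,D^*]$. Given $\alpha,\beta\in\Gamma$, use surjectivity to pick $a,b\in D^*$ with $\nu(a)=\alpha$ and $\nu(b)=\beta$. Then $aba^{-1}b^{-1}\in[D^*,D^*]$, so
\[
0=\nu(aba^{-1}b^{-1})=\alpha+\beta-\alpha-\beta,
\]
and adding $\beta$ and then $\alpha$ on the right yields $\alpha+\beta=\beta+\alpha$; thus $\Gamma$ is abelian. Equivalently, the inclusion $[D^*,D^*]\subseteq\ker\nu$ lets $\nu$ factor through the abelianization $D^*/[D^*,D^*]$, and since the induced map is still surjective onto $\Gamma$, the group $\Gamma$ is abelian.

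I do not expect any genuine obstacle; the only points deserving a word of care are the normalizing identities $\nu(1)=0$ and $\nu(a^{-1})=-\nu(a)$, and the standard group-theoretic fact that the commutator subgroup is generated by—rather than equal to—the set of commutators, which is exactly what lets the multiplicativity of $\nu$ transport commutativity in both directions.
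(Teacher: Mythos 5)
Your proof is correct and complete: both directions use only multiplicativity and surjectivity of $\nu$, the normalizations $\nu(1)=0$ and $\nu(a^{-1})=-\nu(a)$ are justified, and the reduction from the commutator subgroup to single commutators (generation plus the homomorphism property) is handled properly. The paper itself gives no proof of this proposition --- it simply cites Cohn's \emph{Skew Fields} --- and your argument is exactly the standard one that reference supplies, with the factorization through the abelianization $D^*/[D^*,D^*]$ being a clean way to package the converse direction.
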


\subsection{Quantum polynomials}\label{ValQp}
Let $D$ be a division ring with a fixed set $\alpha_1$, $\alpha_2$ , $\ldots$, $\alpha_n$, $n \geq 2$, of automorphimsms. Also, we have $q_{ij}$ in $D^*$ for $i,j=1,2,\ldots ,n$ fix elements, satisfying the relations :
	\begin{gather*}
			q_{ii}=	q_{ij}q_{ji}=	\q_{ijr}	\q_{jri}	\q_{rij}=1\\
				\alpha_i(\alpha_j(d))=q_{ij}\alpha_j(\alpha_i(d))q_{ji},
	\end{gather*}
	
	where $\q_{ijr}=q_{ij}\alpha_j(q_{ir})$ and $d\in D$. We set $\q=(q_{ij})\in \M(n,D)$ and $\alpha=(\alpha_1,\alpha_2,$ $\ldots,\alpha_n)$.

\begin{definition}
The elements $q_{ij}$ of the matrix $\q$ are called \textbf{system of multiparameters}. 
\end{definition}

\begin{definition}[Quantum polynomial ring]
Denote by
\begin{equation}\label{pol}
\Oq:=D_{\q,\alpha}\left[x^{\pm 1}_1,x^{\pm1}_2,\ldots,x^{\pm 1}_r,x_{r+1},\ldots,x_n\right],
\end{equation}
 the associative ring generated by $D$ and by elements $x^{\pm 1}_1$, $x^{\pm 1}_2$, $\ldots$, $x^{\pm 1}_r$,  $x_{r+1}$, $\ldots$, $x_n$  subject to the defining relations
	\begin{gather}
		x_ix_i^{-1}=x_i^{-1}x_i=1\text{, } 1\leq i \leq r,\label{pol1}\\
		x_id=\alpha_i(d)x_i\text{, } d\in D\text{, } i=1,2,\ldots,n, \label{pol2}\\
		x_ix_j=q_{ij}x_jx_i\text{, } i,j=1,2,\ldots,n. \label{pol3}
	\end{gather}
\end{definition}

\begin{definition}
Let $N$ be the subgroup in the multiplicative group $D^*$ of the ring $D$ generated by the derived subgroup $[D^*,D^*]$ and by the set of all elements of the form $z^{-1}\sigma_i(z)$ where $z\in R^*$ and $i=1, \ldots, n$. $\Lambda:=D_{\q,\alpha}\left[x_1,x_2,\ldots,\ldots,x_n\right]$ is a general (generic) quantum polynomials ring if the images of all multiparameters $q_{ij}$, $1\leq i<j\leq n$, are independent in the multiplicative Abelian group $D^*/N$.
\end{definition}

The ring $\Oq$ is a left and right Noetherian domain, it satisfies Ore Condition and it has a division ring of fractions $F:=D_{\q}(x_1,\ldots,x_n)$. We consider $\nu:F^*\rightarrow \Gamma$ a $\Gamma-$valuation with $\nu(D^*)=0$.

\begin{theorem}[\cite{A}]\label{vs}
A valuation of a quantum division ring $D$, is Abelian in the sense that the group  $\Gamma$ is Abelian.
\end{theorem}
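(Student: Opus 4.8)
The plan is to reduce the statement to the criterion already recorded in the excerpt: a $\Gamma$-valuation $\nu$ of a division ring $D$ has $\Gamma$ abelian if and only if $\nu$ vanishes on the derived subgroup $[D^*,D^*]$. So it suffices to show that $\nu(aba^{-1}b^{-1})=0$ for all $a,b\in F^*$, where $F=D_{\q}(x_1,\ldots,x_n)$ is the quantum division ring and $\nu:F^*\to\Gamma$ is the given $\Gamma$-valuation with $\nu(D^*)=0$. First I would fix notation for the value group generated by the generators: since every element of $F^*$ is a quotient of elements of $\Oq$, and every element of $\Oq$ is a $D$-combination of monomials $x_1^{m_1}\cdots x_n^{m_n}$, property (iii) together with $\nu(D^*)=0$ shows that $\Gamma$ is generated as a group by the finitely many values $\gamma_i:=\nu(x_i)$, $i=1,\ldots,n$, modulo the subgroup generated by the $\nu$ of the multiparameters — but the defining relation $x_ix_j=q_{ij}x_jx_i$ gives $\gamma_i+\gamma_j=\nu(q_{ij})+\gamma_j+\gamma_i$, and since $\nu(q_{ij})=0$ (as $q_{ij}\in D^*$), we get $\gamma_i+\gamma_j=\gamma_j+\gamma_i$.

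The key step is then to upgrade this commutation of the generators $\gamma_i$ to commutation of all of $\Gamma$. Since $\Gamma$ is generated by $\gamma_1,\ldots,\gamma_n$ as a group, and these generators pairwise commute, the subgroup they generate is abelian; hence $\Gamma$ itself is abelian. To make the reduction to the generators rigorous I would argue: for $a\in\Oq\setminus\{0\}$ written as $a=\sum d_\mu X^\mu$ (finite sum, $d_\mu\in D^*$, $X^\mu$ a Laurent monomial in the $x_i$), property (iii) gives $\nu(a)\ge\min_\mu\nu(d_\mu X^\mu)=\min_\mu \bigl(\sum_i \mu_i\gamma_i\bigr)$ using $\nu(D^*)=0$ and part (ii); and conversely for a quotient $ab^{-1}\in F^*$ we have $\nu(ab^{-1})=\nu(a)-\nu(b)$, so every value lies in the subgroup $\langle\gamma_1,\ldots,\gamma_n\rangle$, and surjectivity of $\nu$ forces $\Gamma=\langle\gamma_1,\ldots,\gamma_n\rangle$. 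Combined with the pairwise commutation derived from relation \eqref{pol3}, this yields the claim.

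The main obstacle I anticipate is the bookkeeping needed to justify that $\nu$ is actually \emph{determined} on monomials in the naive way — i.e. that there is no "cancellation" forcing $\nu(x_i)$ to interact nontrivially — and more precisely, making sure the reduction genuinely only uses the relation $x_ix_j=q_{ij}x_jx_i$ and not the skew relation $x_id=\alpha_i(d)x_i$. The latter relation a priori could contribute: from $x_id=\alpha_i(d)x_i$ we get $\nu(x_i)+\nu(d)=\nu(\alpha_i(d))+\nu(x_i)$, so $\nu(d)=\nu(\alpha_i(d))$ for all $d\in D^*$, which is automatic here since $\nu(D^*)=0$; I would record this as the reason the automorphisms $\alpha_i$ cause no trouble. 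Once these points are nailed down the proof is essentially the two-line computation $\gamma_i+\gamma_j=\gamma_j+\gamma_i$ followed by the appeal to the earlier "abelian iff trivial on commutators" proposition, so I would present the argument in that order: reduce $\Gamma$ to $\langle\gamma_i\rangle$, show the $\gamma_i$ commute, conclude.
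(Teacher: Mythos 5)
The paper itself gives no proof of Theorem \ref{vs}: it is quoted from \cite{A}, so your attempt has to stand on its own. It does not, because its central step is unjustified and in fact false in general: the claim that $\Gamma=\langle\gamma_1,\dots,\gamma_n\rangle$ with $\gamma_i=\nu(x_i)$. For $a=\sum_\mu d_\mu X^\mu\in\Oq$, axiom (iii) only gives $\nu(a)\geq\min_\mu\nu(d_\mu X^\mu)$, and equality is guaranteed only when the minimum is attained by a single term; when two or more monomials share the minimal value, $\nu(a)$ can be strictly larger, and nothing forces it into the subgroup generated by the $\gamma_i$. So the sentence ``so every value lies in the subgroup $\langle\gamma_1,\dots,\gamma_n\rangle$'' is a non sequitur, and surjectivity of $\nu$ then gives you nothing. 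Concretely, your argument nowhere uses genericity, so if it were valid it would apply to the commutative specialization ($q_{ij}=1$, trivial $\alpha_i$), where there are well-known valuations of $K(x_1,x_2)$ trivial on $K$ whose value group is a non-finitely-generated subgroup of $\Q$, hence not generated by $\nu(x_1),\nu(x_2)$. Internally to the paper the claim is also inconsistent with Theorem \ref{maximal}: your generation statement would force $\Gamma$ to be a finitely generated torsion-free abelian group, i.e. $\Gamma\cong\Z^m$, for \emph{every} valuation with $\nu(D^*)=0$, whereas Theorem \ref{maximal} says that $\Gamma\cong\Zn$ is precisely the maximal-rank case, so general valuations have other (in particular non-finitely-generated) value groups.

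What survives of your plan: the computations $\nu(q_{ij})=0$, $\nu(\alpha_i(d))=\nu(d)$, hence $\gamma_i+\gamma_j=\gamma_j+\gamma_i$, are correct but only show that the \emph{monomial} values commute, which is far weaker than abelianness of $\Gamma$. The reduction you announce at the start --- via the proposition from \cite{Ch2} that $\Gamma$ is abelian iff $\nu$ vanishes on $[F^*,F^*]$ --- is the right road, but you never actually travel it: the whole content of the theorem is to show $\nu(fgf^{-1}g^{-1})=0$ for arbitrary $f,g\in F^*$, i.e. to control $\nu$ on sums where cancellation can occur, and this is exactly what your monomial bookkeeping skips. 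That step requires a genuine argument using the structure of the quantum division ring (this is where Artamonov's proof in \cite{A}, \cite{A4} does its work), and it cannot be replaced by the commutation of the $\gamma_i$ alone.
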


\begin{definition}[\cite{A4}, \cite{A}]
Let $\nu_{1}:D^{*}\rightarrow\Gamma_{1}$ and $\nu_{2}:D^{*}\rightarrow\Gamma_{2}$ be two  valuations. Set $\nu_{1}\geq\nu_{2}$ if there exists an epimorphism of ordered groups $\tau:\Gamma_{1}\rightarrow\Gamma_{2}$ such that $\tau\nu_{1}=\nu_{2}$. It means that the diagram

$$
\xymatrix{
D^* \ar[d]_{\nu_2} \ar[r]^{\nu_1} & \varGamma_1 \ar[dl]^{\tau} \\
\varGamma_2&}
$$
is commutative.
\end{definition}

\begin{definition}[\cite{A4}, \cite{A}]
A valuation $\nu$ has a maximal rank if $\tau$ is an isomorphism in the previous definition.
\end{definition}

\begin{theorem}[\cite{A4}]\label{maximal}
A valuation $\nu:F^*\rightarrow \Gamma$  of a general quantum division ring $\O_q$ is has maximal rank if only if $\Gamma\cong\Zn$.
\end{theorem}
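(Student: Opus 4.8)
The strategy is to determine the value group $\Gamma$ completely. By Theorem~\ref{vs} it is abelian; writing it additively, conjugation becomes invisible to $\nu$, i.e.\ $\nu(z^{-1}fz)=\nu(f)$ for all $z,f\in F^*$. Put $\gamma_i:=\nu(x_i)\in\Gamma$, which makes sense since each $x_i$ is invertible in $F$; as $\nu(D^*)=0$ and the $q_{ij}$ lie in $D^*$, one has $\nu(d\,x_1^{\mu_1}\cdots x_n^{\mu_n})=\sum_i\mu_i\gamma_i$ for $d\in D^*$. The decisive point is the claim that for a general quantum polynomial ring every nonzero $f=\sum_{\mu}d_{\mu}x^{\mu}\in\Oq$ satisfies $\nu(f)=\min_{\mu}\sum_i\mu_i\gamma_i$; this is the step where I expect the genuine work to be concentrated.

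I would prove the claim by induction on the number of monomials of $f$. The ultrametric inequality $\nu(a+b)\ge\min\{\nu(a),\nu(b)\}$ reduces the problem to the case in which all monomials of $f$ have one common value $g$, in which $\nu(f)>g$, and in which $f$ has as few monomials as possible with these properties, necessarily at least two. Conjugation by $x_k$ multiplies the coefficient of $x^{\mu}$ by a scalar $b^{(k)}_{\mu}\in D^*$, defined by $x_k^{-1}x^{\mu}x_k=b^{(k)}_{\mu}x^{\mu}$, and in the abelian group $A:=D^*/N$ the assignment $\mu\mapsto\overline{b^{(k)}_{\mu}}$ is $\Z$-linear with $\overline{b^{(k)}_{e_i}}=\pm\overline{q_{ik}}$ for $i\ne k$ and $\overline{b^{(k)}_{e_k}}=0$; moreover $x_k^{-1}fx_k$ has the same support and the same value $\nu(f)>g$. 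Subtracting from $x_k^{-1}fx_k$ the appropriate left $D$-multiple of $f$ annihilates one chosen monomial, giving an element with strictly fewer monomials yet value $>g$, which by minimality must be zero; transcribing this vanishing into $A$ yields $\overline{b^{(k)}_{\mu}}=\overline{b^{(k)}_{\mu_0}}$ for every $\mu\in\operatorname{supp}(f)$ and every $k$, that is $\sum_{i\ne k}(\mu_i-\mu_{0,i})\overline{q_{ik}}=0$ in $A$ for each $k$. Since the classes $\overline{q_{ij}}$ ($1\le i<j\le n$) are independent in $A$ — exactly the generality hypothesis — this forces $\mu=\mu_0$, contradicting the presence of two distinct monomials. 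This last, essentially linear-algebraic, step over $A$, together with the bookkeeping of the twisting scalars $b^{(k)}_{\mu}$, is the main obstacle.

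Granting the claim, the rest is formal. Surjectivity of $\nu$ and the fact that $F$ is the division ring of fractions of $\Oq$ now give $\Gamma=\langle\gamma_1,\dots,\gamma_n\rangle$, and a value group is torsion-free, so $\Gamma\cong\Z^{m}$ for some $m\le n$. Next choose a total group order on $\Zn$ refining the preorder $\mu\preceq\mu'\iff\sum_i\mu_i\gamma_i\le\sum_i\mu'_i\gamma_i$: the relation lattice $K=\{a\in\Zn:\sum_i a_i\gamma_i=0\}$ is a direct summand of $\Zn$, so one may order $\Zn/K\cong\langle\gamma_i\rangle$ by the order of $\Gamma$, order $K$ arbitrarily, and take the lexicographic combination. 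Define $w$ on $\Oq\setminus\{0\}$ by $w(\sum d_{\mu}x^{\mu})=\min_{\preceq}\operatorname{supp}$, and extend it to $F^*$ by $w(s^{-1}t)=w(t)-w(s)$. Since the standard monomials form a basis of $\Oq$ as a $D$-module — the PBW property, valid for every quantum polynomial ring — the leading monomial of a product never cancels, so $w$ is a valuation; clearly $w(D^*)=0$ and $w$ is onto $\Zn$. Finally $\tau:\Zn\to\Gamma$, $e_i\mapsto\gamma_i$, is a surjection of ordered groups with $\tau\circ w=\nu$ by the claim, so that $w\ge\nu$ in the paper's sense.

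The equivalence now follows. If $\nu$ has maximal rank, then the epimorphism $\tau$ just exhibited is an isomorphism, whence $\Gamma\cong\Zn$. Conversely, suppose $\Gamma\cong\Zn$ and $\nu_1\ge\nu$ via an epimorphism $\sigma:\Gamma_1\to\Gamma$ with $\sigma\circ\nu_1=\nu$. Then $\nu_1$ is again a $\Gamma$-valuation of $F$ trivial on $D^*$, so the two previous paragraphs apply to $\nu_1$ and give $\Gamma_1\cong\Z^{m_1}$ with $m_1\le n$; an epimorphism $\Z^{m_1}\twoheadrightarrow\Zn$ forces $m_1=n$, and a surjective endomorphism of $\Zn$ is an isomorphism, so $\sigma$ is an isomorphism and $\nu$ has maximal rank.
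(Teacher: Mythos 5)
This theorem is not proved in the paper at all: it is imported verbatim from Artamonov \cite{A4}, so there is no internal proof to compare against. Your blind argument is, in substance, a correct reconstruction of the argument of \cite{A4}: the core step (conjugating a minimal counterexample by $x_k$, cancelling one monomial, and using independence of the classes $\overline{q_{ij}}$ in $D^*/N$ to force the support to be a single monomial, hence $\nu(f)=\min_\mu\nu(d_\mu x^\mu)$) is exactly the right mechanism, and the deductions $\Gamma=\langle\nu(x_1),\dots,\nu(x_n)\rangle\cong\Z^m$, the construction of the monomial valuation $w$ with $w\geq\nu$, and the rank comparison are sound. Two small points you should make explicit in a careful write-up: (a) conjugation by $x_k$ sends $d_\mu x^\mu$ to $\alpha_k^{-1}(d_\mu)\,b^{(k)}_\mu x^\mu$, i.e.\ the coefficient is also twisted by $\alpha_k^{-1}$, not merely multiplied by $b^{(k)}_\mu$; this is harmless precisely because elements $z^{-1}\alpha_i(z)$ lie in $N$, but the bookkeeping should be stated; (b) in the converse direction your sentence ``$\nu_1$ is again trivial on $D^*$'' does not follow from $\sigma\nu_1=\nu$ alone (it only gives $\nu_1(D^*)\subseteq\ker\sigma$); it holds because the ordering $\nu_1\geq\nu$ is taken, as in \cite{A4} and in this paper's standing convention, within the class of valuations of $F^*$ vanishing on $D^*$, and this hypothesis should be invoked explicitly.
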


\section{Completions of quantum polynomials}\label{CompQp}

In this section $\nu:F^*\rightarrow \Zn$  is a maximal $\Zn-$valuation.

\begin{definition}[\cite{A}]

Let $\F$ be the set of all maps $f:\Zn\rightarrow k$ and the zero element such that $supp\, f:=\{m\in\Zn; f(m)\neq 0\}$ is Artinian with respect to the lexicographic order on $\Zn$.
\end{definition}

\begin{theorem}
$\F$ is a division ring containing $F$.
\end{theorem}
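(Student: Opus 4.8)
The set $\F$ is a Malcev--Neumann ring of twisted formal series over the totally ordered abelian group $(\Zn,\le_{\mathrm{lex}})$ --- it is abelian and isomorphic to $\Zn$ by Theorems~\ref{vs} and~\ref{maximal} --- so the plan is to run the classical Malcev--Neumann construction, the only genuinely new ingredient being the twisting coming from the multiparameters $q_{ij}$ and the automorphisms $\alpha_i$. For a total order, ``Artinian'' means the descending chain condition, i.e.\ no infinite strictly decreasing sequence, equivalently every nonempty subset has a least element. The argument will rest repeatedly on the Neumann well-ordering lemma in two forms: if $A,B\subseteq\Zn$ are Artinian, then $A+B$ is Artinian and each $t\in A+B$ has only finitely many decompositions $t=u+v$ with $u\in A$ and $v\in B$; and if $S\subseteq\Zn$ is an Artinian subset of the strictly positive cone, then the set $\bigcup_{j\ge0}S^{(j)}$ of all finite sums of elements of $S$ is Artinian and every element of it lies in $S^{(j)}$ for only finitely many~$j$.

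First I would set up the ring structure. Addition on $\F$ is pointwise, and since a union of two Artinian subsets of a totally ordered set is Artinian, $\F$ is an abelian group. For the product I take the convolution forced by the relations \eqref{pol2}--\eqref{pol3}: writing $x^{u}x^{v}=\lambda(u,v)\,x^{u+v}$, where $\lambda(u,v)\in k^{*}$ is the scalar assembled from the $q_{ij}$, and $x^{u}d=\alpha^{u}(d)\,x^{u}$, one is forced to put $(fg)(t):=\sum_{u+v=t}f(u)\,\lambda(u,v)\,\alpha^{u}(g(v))$. By the first form of the Neumann lemma each of these sums is finite and $\operatorname{supp}(fg)\subseteq\operatorname{supp}f+\operatorname{supp}g$ is Artinian, so $fg\in\F$. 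Distributivity is immediate and the series supported at $0\in\Zn$ with value $1$ is a two-sided identity. Associativity $(fg)h=f(gh)$ I would check by expanding both sides into sums that are finite in each degree (again by the Neumann lemma) and reducing the equality to identities among the scalars $\lambda(u,v)$ and the maps $\alpha^{u}$ that are exactly the cocycle relations $q_{ii}=q_{ij}q_{ji}=\q_{ijr}\q_{jri}\q_{rij}=1$ and $\alpha_i\alpha_j=q_{ij}(\alpha_j\alpha_i)q_{ji}$ satisfied by the multiparameters; hence $\F$ is a ring.

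Next, $\F$ is a division ring and contains $F$. Given $0\ne f\in\F$, let $m_{0}$ be the least element of its (nonempty, Artinian) support and factor $f=(f(m_{0})x^{m_{0}})(1-g)$, where $f(m_{0})x^{m_{0}}$ is an invertible monomial of $\F$ and $\operatorname{supp}g$ lies in the strictly positive cone of $\Zn$. It then suffices to invert $1-g$, for which I take $h:=\sum_{j\ge0}g^{j}$: by the second form of the Neumann lemma (with $S=\operatorname{supp}g$) each coefficient $h(t)=\sum_{j}g^{j}(t)$ is a finite sum and $\operatorname{supp}h$ is Artinian, so $h\in\F$; passing to the limit in $(1-g)\sum_{j=0}^{N}g^{j}=1-g^{N+1}$ --- legitimate since, for fixed $t$, the coefficient of $t$ in $g^{N+1}$ vanishes once $N$ is large --- gives $(1-g)h=1$, and the symmetric computation gives $h(1-g)=1$. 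To embed $F$, note that because $\nu(D^{*})=0$ the assignment $m\mapsto\nu(x^{m})$ is a group homomorphism $\Zn\to\Zn$, and it is injective --- this is where the maximality of $\nu$ (Theorem~\ref{maximal}) enters --- so distinct monomials receive distinct values. Sending $\sum_{m}d_{m}x^{m}\in\Oq$ to the finitely supported series with value $d_{m}$ at $\nu(x^{m})$ is therefore a well-defined injective ring homomorphism (the product on $\F$ was built to reproduce \eqref{pol2}--\eqref{pol3}), and since $\F$ is a division ring every nonzero element of the Ore domain $\Oq$ becomes a unit in $\F$, so the embedding extends uniquely to an embedding of the Ore division ring of fractions $F=D_{\q}(x_{1},\ldots,x_{n})$ into $\F$.

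The genuinely hard part sits in two places: verifying that the data $(q_{ij},\alpha_i)$ assemble into an honest $2$-cocycle so that the twisted convolution is associative, and the Neumann-type well-ordering lemmas, which simultaneously make the product well defined in each degree, turn the associativity check into a finite computation, and justify the geometric-series inversion of $1-g$. Everything else --- distributivity, the identity element, and the routine verification that the proposed map respects \eqref{pol2}--\eqref{pol3} --- is bookkeeping.
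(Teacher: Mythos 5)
The paper itself gives no argument for this theorem: it simply cites Theorems 3.4 and 3.7 of \cite{A3}, where Artamonov carries out essentially the construction you outline (twisted Malcev--Neumann series, Neumann's well-ordering lemmas, leading-term factorization and a geometric series to produce inverses). So your route is the intended one, and the purely ring-theoretic part of your outline --- finiteness of the convolution sums, Artinian support of products, associativity reduced to the cocycle identities among the $q_{ij}$ and $\alpha_i$, and the inversion of $1-g$ --- is sound.

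The genuine gap is in the embedding of $F$ into $\F$, in two respects. First, the coefficients of elements of $\F$ lie in $k$ (the residue division ring, $\O/\m\cong k$), not in $D$: your convolution formula uses $q_{ij}$ and $\alpha^{u}$ as if they acted on $k$, so you must say that the twisting data are the residues $\overline{q_{ij}}$ and the automorphisms induced on $k$, and that $D$ maps into $k$ injectively (which uses $\nu(D^{*})=0$); none of this is in your text. Second, and more seriously, your map sends $x^{m}$ to the delta function at $\nu(x^{m})$, while the product you defined on $\F$ reads the cocycle $\lambda(u,v)$ off the index lattice itself; multiplicativity of your map would require $\overline{\lambda(u,v)}=\lambda\bigl(\nu(x^{u}),\nu(x^{v})\bigr)$, which there is no reason to expect. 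Injectivity of $m\mapsto\nu(x^{m})$ is not enough to repair this: you need it to be an automorphism of $\Zn$, and then you must define the multiplication on $\F$ using the cocycle (and automorphisms) transported along that isomorphism, otherwise the values outside its image are not even reached by monomials and the twist on the value lattice is not determined by the $q_{ij}$. Surjectivity is where the hypotheses really enter: in the generic case distinct monomials take distinct values, so $\nu(f)=\min\{\nu(x^{m})\colon m\in\mathrm{supp}\,f\}$ for $f\in\Oq\setminus\{0\}$, hence $\nu(F^{*})$ is the subgroup generated by $\nu(x_{1}),\ldots,\nu(x_{n})$, and surjectivity of the $\Zn$-valuation forces that subgroup to be all of $\Zn$; this is a use of genericity, not just of maximal rank as you assert via Theorem \ref{maximal}. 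With the monomial-value map shown to be an automorphism and the product on $\F$ defined through it, your embedding and its extension to the Ore fraction division ring $F$ go through as you describe.
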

\begin{proof}
See $\cite{A3}$ Theorem $3.4$ and $3.7$.
\end{proof}

Expand the valuation $\nu$ to $f\in\F$ in the following way. If $f \in \F$ then $\nu(f)$ the least element from $supp\,f$.

\begin{definition}[\cite{A}]
The division ring $\F$ is called a \textit{completion} of $F$ with respect to $\nu$. 
\end{definition}

\begin{remark}
If $\O:=\{f\in \F;\nu(f)\geq 0 \}$ and $\m:=\{f\in \F;\nu(f)>0\}$, then $\O$ is a subring in $\F$ and $\m$ is an ideal in $\O$. Moreover, $\O/\m \cong k$.
\end{remark}

Let $\Rn$ be a vector space of all rows $(r_1,\ldots , r_n)$, $r_i \in \Re$, of a length $n$ and $\Rn$ is equipped with the lexicographic order.

\begin{theorem}[\cite{K16}]
Let $\leq_{\Zn}$ be a totally order in the additive group $\Zn$. Then there exists order preserving group embedding $\Zn\rightarrow \Rn$.
\end{theorem}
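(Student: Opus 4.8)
The statement is the special case of Hahn's embedding theorem for $\Zn$, and for $\Zn$ it admits an elementary proof by induction on the rank $n$, using the chain of convex subgroups of the ordered group $(\Zn,\leq_{\Zn})$. Recall that a subgroup $C\leq\Zn$ is \emph{convex} if $0\leq a\leq b$ with $b\in C$ forces $a\in C$; in a totally ordered abelian group the convex subgroups are linearly ordered by inclusion, and since $\Zn$ has finite rank there are only finitely many of them, because the quotient of a totally ordered group by a convex subgroup is torsion‑free, so each proper inclusion up the chain strictly increases the rank. The plan is: peel off the smallest nonzero convex subgroup, observe it is Archimedean, embed it into $\Re$ by Hölder's theorem, embed the quotient into a lower‑dimensional $\Re^{k}$ by the induction hypothesis, and glue the two embeddings lexicographically.

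In detail: if $(\Zn,\leq_{\Zn})$ is itself Archimedean, Hölder's theorem gives an order‑preserving embedding $\Zn\hookrightarrow(\Re,+,\leq)$, and composing with $r\mapsto(r,0,\ldots,0)$ (which is lex‑order‑preserving) settles this case. Otherwise let $A$ be the smallest nonzero convex subgroup. A convex subgroup of $A$ for the induced order is again convex in $\Zn$, so minimality forces $A$ to have no proper nonzero convex subgroup, i.e.\ $A$ is Archimedean; hence there is an order embedding $\psi:A\hookrightarrow(\Re,+,\leq)$. The quotient $Q:=\Zn/A$ is torsion‑free, hence free, of rank $k$ with $1\leq k\leq n-1$ (since $\operatorname{rank}A\geq 1$), and it carries the induced total order, where $\bar x>0$ iff $x>0$ for $x\notin A$; this is well defined precisely because $A$ is convex. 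By the induction hypothesis there is an order‑preserving embedding $\phi:Q\hookrightarrow\Re^{k}$ for the lexicographic order. Since $Q$ is free, the sequence $0\to A\to\Zn\xrightarrow{\pi}Q\to 0$ splits via a group section $s$, so each $x\in\Zn$ is uniquely $x=s(\pi(x))+a(x)$ with $a(x)\in A$. Define $\Phi:\Zn\to\Re^{k}\times\Re=\Re^{k+1}$ by $\Phi(x)=\bigl(\phi(\pi(x)),\,\psi(a(x))\bigr)$. This is a group homomorphism, and if $x>0$ then either $\pi(x)\neq 0$, so $\pi(x)>0$ in $Q$ and $\phi(\pi(x))>0$, whence $\Phi(x)>0$ lexicographically, or $x\in A\setminus\{0\}$, so $\Phi(x)=(0,\psi(x))$ with $\psi(x)>0$, again $\Phi(x)>0$. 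Thus $\Phi$ is an order‑preserving, in particular injective, homomorphism, and composing it with the lex embedding $\Re^{k+1}\hookrightarrow\Rn$ (legal because $k+1\leq n$) produces the required order‑preserving group embedding $\Zn\hookrightarrow\Rn$.

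The only non‑formal ingredient is Hölder's theorem, that every Archimedean totally ordered abelian group embeds order‑preservingly into $(\Re,+,\leq)$; this is where $\Re$ genuinely enters. For the groups at hand one may argue directly, since each Archimedean piece $A$ is finitely generated and its Hölder embedding is given explicitly by the Dedekind cut $\psi(g)=\sup\{\,m/\ell\in\Q:\ \ell>0,\ \ell a\leq m g\,\}$ for a fixed $0<a\in A$. Everything else — the linear ordering and finiteness of the set of convex subgroups, torsion‑freeness of quotients by convex subgroups, the splitting $s$, and the order check for $\Phi$ — is routine once the Archimedean/non‑Archimedean dichotomy is set up, so the only real obstacle is packaging Hölder's theorem (or re‑proving it in the finitely generated case).
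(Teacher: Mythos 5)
Your proof is correct in substance, but note that the paper does not prove this statement at all: it is quoted with a citation to Kokorin--Kopytov \cite{K16}, so what you have done is supply the missing argument rather than parallel an existing one. Your route is the standard proof of the finite-rank case of Hahn's embedding theorem: the convex subgroups of $(\Zn,\leq_{\Zn})$ form a finite chain because quotients by convex subgroups are torsion-free and each proper inclusion raises the rank; the minimal nonzero convex subgroup $A$ is Archimedean and embeds in $\Re$ by H\"older; the quotient $\Zn/A$ is free of smaller rank, carries the induced total order, and is handled by induction; and the splitting of $0\to A\to\Zn\to\Zn/A\to 0$ lets you glue the two embeddings lexicographically, the verification that $\Phi(x)>0$ for $x>0$ being exactly as you state. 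This gives the paper a self-contained proof where it previously had only a reference, at the modest cost of invoking H\"older's theorem (the genuinely analytic ingredient). One small correction to your closing aside: the explicit Dedekind-cut formula is written with the roles of $m$ and $\ell$ interchanged; as stated, $\psi(a)=\sup\{m/\ell:\ \ell>0,\ \ell a\leq m a\}=\infty$. It should read $\psi(g)=\sup\{m/\ell\in\Q:\ \ell>0,\ m a\leq \ell g\}$ for a fixed $0<a\in A$. Since your main argument uses H\"older's theorem as a black box, this slip does not affect the correctness of the proof itself.
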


\begin{definition}\cite{A}
A totally order $\leq_{\Zn}$ is essentially lexicographic if it belongs to the orbit of the standard embedding of $\Zn$ in to $\Rn$ under the action of the group  $GL(n, \Z)$. \textit{i.e.}, if $a,b\in \Zn$, $a\leq_{\Zn} b$ if only if $aA\leq bA$ for some fixed $A$ in $ GL(n,\Z)$ and $\leq$ the lexicographic order.
\end{definition}

\begin{conjecture}[\cite{A}]
A valuation $\nu$ is associated to an essentially lexicographic order on $\Zn$ if and only if $\cap_{i>1}\m ^i = 0$.
\end{conjecture}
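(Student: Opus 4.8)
The plan is to \emph{refute} this conjecture: inside the very class it concerns we produce a maximal $\Z^n$-valuation $\nu$ that is associated to an essentially lexicographic order on $\Z^n$ and yet satisfies $\bigcap_{i>1}\m^i\neq 0$, which breaks the implication ``essentially lexicographic $\Rightarrow\bigcap_{i>1}\m^i=0$''.

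First I would fix $n\geq 2$ and equip $\Z^n$ with the \emph{standard} lexicographic order, which is essentially lexicographic with $A=I_n\in GL(n,\Z)$. On $F$, the division ring of fractions of a general quantum polynomial ring $\Oq$, take $\nu$ determined on monomials by $\nu\bigl(d\,x_1^{a_1}\cdots x_n^{a_n}\bigr)=(a_1,\ldots,a_n)$, extended to a general element of $\Oq$ by taking the $\leq$-least monomial occurring in it and then to $F$ through the Ore property. The valuation axioms are routine; the only point needing a word is $\nu(fg)=\nu(f)+\nu(g)$, which holds because the $\leq$-least monomials of $f$ and of $g$ multiply, up to a unit of $D$ by the defining relations, to the $\leq$-least monomial of $fg$ with no cancellation. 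Since $\nu$ is onto $\Z^n$ with $\nu(D^*)=0$, Theorem \ref{maximal} gives that it has maximal rank, so the completion $\F$ and the ideal $\m=\{f\in\F:\nu(f)>0\}$ of Section \ref{CompQp} apply. The crucial observation is then that $x_1\in\m^i$ for \emph{every} $i\geq 1$: indeed
\[
x_1=\bigl(x_1x_2^{-(i-1)}\bigr)\cdot\underbrace{x_2\cdots x_2}_{i-1},
\]
and each of these $i$ factors lies in $\m$ since $\nu\bigl(x_1x_2^{-(i-1)}\bigr)=(1,-(i-1),0,\ldots,0)>0$ and $\nu(x_2)=(0,1,0,\ldots,0)>0$ in the lexicographic order. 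Hence $0\neq x_1\in\bigcap_{i>1}\m^i$, contradicting the conjecture.

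The main obstacle is not this computation but making the example fit \emph{all} the standing hypotheses: one must pin down a concrete general (generic) quantum polynomial ring $\Oq$ with $n\geq 2$ so that Theorems \ref{vs} and \ref{maximal} and the construction of $\F$ in Section \ref{CompQp} are available, and then verify that the $\nu$ above is genuinely maximal on it; after that, the failure of one implication of the ``if and only if'' is immediate. I would also stress that the obstruction is structural. For any total group order on $\Z^n$ with $n\geq 2$, the smallest nonzero convex subgroup $H$ is, by H\"older's theorem, order-embeddable into $\Re$; if $\operatorname{rank}H\geq 2$ its image is dense in $\Re$, so every positive element of $H$ is a sum of $i$ positive elements for all $i$, and if $\operatorname{rank}H=1$, say $H=\Z h$, then for any $u$ with $u+H>0$ in $\Z^n/H\cong\Z^{n-1}$ we have $u=(u-(i-1)h)+h+\cdots+h$ with all $i$ summands positive. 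In either case a suitable monomial $x^v$ belongs to $\bigcap_{i>1}\m^i$, so $\bigcap_{i>1}\m^i\neq 0$ for \emph{every} maximal $\Z^n$-valuation; in particular the conjectured equivalence is impossible, and it suffices to exhibit the essentially lexicographic instance above.
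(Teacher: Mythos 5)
Your proposal is correct and takes essentially the same route as the paper: the paper also refutes the conjecture via the lexicographic order, showing for the positive cone $A$ of lex-ordered $\Z^2$ that $nA=\{(x,y)\in\Z^2:(0,n)\leq(x,y)\}$ and hence $(1,0)\in\bigcap_{n>0}nA$, which is exactly the value-group form of your factorization $x_1=\bigl(x_1x_2^{-(i-1)}\bigr)x_2^{i-1}$ giving $0\neq x_1\in\bigcap_{i>1}\m^{i}$. Your closing H\"older-type argument that the intersection is nonzero for \emph{every} total order on $\Zn$ with $n\geq 2$ is a strengthening not recorded in the paper, but the core counterexample coincides.
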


In the study of this conjecture we obtain the following results partial:

\begin{proposition}\label{ArqInf}
If $\nu:R\rightarrow \Gamma \cup \{\infty \}$ is a valuation of a ring $R$ and $\Gamma$ is a Archimedean group with $\mathcal{W}_{\nu}:= \left\{ a\in R,\nu\left(a\right)>0\right\} $, $\inf\{\nu(\mathcal{W}_{\nu})\}\neq 0$ and $\bigcap_{i\geq1} \mathcal{W}_{\nu} ^i:=I$, then $\nu(I)=\infty$.
\end{proposition}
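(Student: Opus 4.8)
The plan is to argue by contradiction: suppose $\nu(I)\neq\infty$, so there exists $a\in I$ with $\nu(a)=\gamma\in\Gamma$, $\gamma$ finite. Since $a\in I=\bigcap_{i\geq 1}\mathcal{W}_\nu^i$, for every $i\geq 1$ we can write $a$ as a finite sum of products $w_{1}\cdots w_{i}$ with each $w_j\in\mathcal{W}_\nu$. Applying property (iii) of the valuation to such a sum and property (ii) to each product, we get $\nu(a)\geq \min_{\text{terms}}\sum_{j=1}^{i}\nu(w_j)$. Set $\delta:=\inf\{\nu(\mathcal{W}_\nu)\}$; by hypothesis $\delta\neq 0$, and since $\nu(w)>0$ for all $w\in\mathcal{W}_\nu$ we have $\delta\geq 0$, hence $\delta>0$. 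Each summand therefore satisfies $\sum_{j=1}^{i}\nu(w_j)\geq i\delta$, and so $\nu(a)=\gamma\geq i\delta$ for every $i\geq 1$.

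**Using the Archimedean hypothesis.**
Now I would invoke that $\Gamma$ is Archimedean: the relation $\gamma\geq i\delta$ for all $i\in\N$, with $\delta>0$, contradicts the Archimedean property, which says precisely that for positive $\delta$ and any $\gamma$ there is some $i$ with $i\delta>\gamma$. Hence no such finite-valued $a\in I$ can exist, i.e. $\nu(a)=\infty$ for all $a\in I$, which is the assertion $\nu(I)=\infty$. One should also dispatch the trivial case $a=0$ separately (then $\nu(a)=\infty$ by convention), and note $I$ is nonempty since $0\in I$.

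**Where the care is needed.**
The one genuinely delicate point is the claim that membership in $\mathcal{W}_\nu^i$ forces the value to be at least $i\delta$. The subtlety is that $\delta=\inf\nu(\mathcal{W}_\nu)$ need not be attained, so individual factors $w_j$ may have $\nu(w_j)$ close to $0$; but the bound $\nu(w_j)\geq\delta$ still holds for each factor, and summing $i$ such inequalities gives $\geq i\delta$ regardless of attainment — so the infimum-versus-minimum distinction costs nothing here. A second small point: $\mathcal{W}_\nu^i$ consists of \emph{finite sums} of length-$i$ products, so property (iii) must be applied in its finitely-iterated form, $\nu(\sum_k a_k)\geq\min_k\nu(a_k)$, which follows from (iii) by induction on the number of summands. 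Beyond these bookkeeping matters the argument is essentially forced, so I expect no real obstacle; the Archimedean hypothesis is doing exactly the work it is designed for, converting the uniform lower bounds $i\delta$ into the conclusion that $\gamma$ cannot be finite.
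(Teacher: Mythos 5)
Your proposal is correct and follows essentially the same route as the paper: both arguments bound $\nu$ on $\mathcal{W}_{\nu}^i$ below by $i\,\inf\{\nu(\mathcal{W}_{\nu})\}>0$ (the paper phrases this as an induction showing $i\lambda_1\leq\lambda_i$ for $\lambda_i:=\inf\nu(\mathcal{W}_{\nu}^i)$) and then invoke the Archimedean property to rule out a finite value on $I$. Your explicit treatment of the finite-sum structure of $\mathcal{W}_{\nu}^i$ and of the unattained infimum only makes the same argument more careful than the paper's write-up.
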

\begin{proof}
Let $A_i:=\nu(\mathcal{W}_{\nu}^i)$ and $\lambda_i:=\inf\{A_i\}$ be, then $\lambda_1<\lambda_2<...<\lambda_i$ and $i\lambda_1\leq\lambda_i$, indeed: (by induction over $i$) as $\inf\{\nu(\mathcal{W}_{\nu})\}\neq 0$ then $0<\lambda_1\leq\nu(a)$ for all $a\in\mathcal{W}_{\nu}$, hence $\lambda_1<2\lambda_1\leq\nu(ab)$ for all $a,b\in \mathcal{W}_{\nu}$, therefore $2\lambda_1\leq\lambda_2$, suppose that $\lambda_{i-1}<\lambda_i$ and $i\lambda_1\leq \lambda_i$, then $i\lambda_1< (i+1)\lambda_1\leq\lambda_i +\lambda_1\leq\nu(a)+\nu(b)=\nu(ab)$ for all $a\in\mathcal{W}_{\nu}^i$  and  $b\in\mathcal{W}_{\nu}$, then, $\lambda_{i}<\lambda_{i+1}$ and $(i+1)\lambda_1\leq \lambda_{i+1}$.\\

Now, suppose there exists $b\in I$ such that $\nu(b)=\lambda<\infty$, so $\lambda_1<\lambda$ and as $\Gamma$ is Archimedean  there exists an integer $m$ such that $m\lambda_1>\lambda$, therefore $\lambda\notin A_m$,  hence $b\notin \mathcal{W}_{\nu}^m$, contradicting that $b\in I$.
\end{proof}

\begin{corollary}\label{coroD}
If $\nu:D\rightarrow \Gamma \cup \{\infty \}$ is a valuation of a division ring $D$ and $\Gamma$ is a Archimedean group with $\inf\{\nu(\mathcal{W}_{\nu})\}\neq 0$, then $\bigcap_{i\geq1} \mathcal{W}_{\nu} ^i=0$.
\end{corollary}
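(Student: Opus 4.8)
The plan is to derive this as an immediate corollary of Proposition~\ref{ArqInf}. First I would set $I:=\bigcap_{i\ge 1}\mathcal{W}_{\nu}^{i}$ and check that all the hypotheses of Proposition~\ref{ArqInf} are in force with $R=D$: the group $\Gamma$ is Archimedean by assumption, and $\inf\{\nu(\mathcal{W}_{\nu})\}\neq 0$ is precisely the extra hypothesis we are handed. (The only harmless point to record is that the powers $\mathcal{W}_{\nu}^{i}$ make sense: since $\nu(0)=\infty>0$ we have $0\in\mathcal{W}_{\nu}$, so $\mathcal{W}_{\nu}\neq\emptyset$, and indeed $\mathcal{W}_{\nu}$ is an ideal of $\Lambda_{\nu}$.) Proposition~\ref{ArqInf} then yields $\nu(I)=\infty$, that is, $\nu(a)=\infty$ for every $a\in I$; equivalently $I\subseteq \ker\nu$.

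Next I would invoke the fact that a division ring has no proper nonzero two-sided ideals. Concretely, if $a\in D$ with $a\neq 0$, then $a$ is invertible, so in $\Gamma\cup\{\infty\}$ we have $\nu(a)+\nu(a^{-1})=\nu(1)=0$; were $\nu(a)=\infty$, the absorbing rule $\infty+x=\infty$ would force $0=\infty$, which is impossible because $\nu$ assumes at least two values and $\infty$ is strictly larger than every element of $\Gamma$, hence $\infty\neq 0$. Therefore $\ker\nu=\{a\in D:\nu(a)=\infty\}=\{0\}$. Combining this with $I\subseteq\ker\nu$ from the first step gives $I=0$, which is exactly the assertion.

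I do not expect any real obstacle here: the entire substance is carried by Proposition~\ref{ArqInf}, and the corollary amounts to the remark that in a division ring the kernel of a valuation collapses to $\{0\}$. The only steps that deserve an explicit line are the verification that the hypotheses of Proposition~\ref{ArqInf} transfer verbatim to $R=D$ and the careful bookkeeping with $\infty$ in $\Gamma\cup\{\infty\}$ when one inverts a nonzero element.
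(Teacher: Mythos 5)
Your proposal is correct and takes essentially the same route as the paper: the paper's own proof is exactly the observation that $0=\nu(1)=\nu(aa^{-1})=\nu(a)+\nu(a^{-1})$ forces $\nu(a)<\infty$ for every $a\in D^{*}$, hence $\nu(a)=\infty$ only for $a=0$, combined (implicitly) with Proposition \ref{ArqInf} giving $\nu\bigl(\bigcap_{i\geq 1}\mathcal{W}_{\nu}^{i}\bigr)=\infty$. Your write-up merely makes the bookkeeping with $\infty$ and the transfer of hypotheses explicit, which is harmless.
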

\begin{proof}
$0=\nu(1)=\nu(aa^{-1})=\nu(a)+\nu(a^{-1})$ for all $a\in D^*$, then $\nu(a)<\infty$ for all $a\in D^*$, therefore $\nu(a)=\infty$ if only if $a=0$.
\end{proof}
\begin{remark}
In the Proposition \ref{ArqInf} the condition  $\inf\{\nu(\mathcal{W}_{\nu})\}\neq 0$ can be replaced by $\inf\{\nu(\mathcal{W}^i_{\nu})\}\neq 0$ for any $i>0$ in $\N$.
\end{remark}

\begin{example}
If we take lexicographic order on $\Z^2$ the order does not have intersection property: consider $A:=\{(x,y)\in\Z^2; (0,0)<(x,y) \}$ and $nA:=\sum_{i=1}^{n}A$ with $n>0$ , then $nA=\{(x,y)\in\Z^2; (0,n)\leq(x,y)\}$.
By induction over n: If $n=2$, then $2A=A	\setminus\{(0,1)\}$, indeed: as $\min\{A\}=(0,1)$ then $(0,2)\leq (x,y)$ with $(x,y)\in 2A$, thus $2A\subseteq A	\setminus\{(0,1)\}$. Now, if $(x,y)$ in $2A$, then $(x,y-1)\in A$, because $x>0$ or $x=0$ and $y\geq 2$.\\

Suppose that $nA=(n-1)A	\setminus\{(0,n-1)\}$, as $\min\{nA\}=(0,n)$ then $(0,n+1)\leq (x,y)$ with $(x,y)\in (n+1)A$, thus $(n+1)A\subseteq nA	\setminus\{(0,n)\}$. Now, if $(x,y)$ in $(n+1)A$, then $(x,y-1)\in nA$, because $x>0$ or $x=0$ and $y\geq n+1$. Consequently $(n+1)A=\{(x,y)\in\Z^2; (0,n+1)\leq(x,y)\}$.\\

Hence, as $(1,0)\in nA$ for every $n\geq 1$ since $(0,n)<(1,0)$ , then $(1,0)\in \cap_{n>0} nA$.
\end{example}

It follows a counterexample to the conjecture, since a lexicographic order is essentially lexicographic.


\section{Skew $PBW$ extensions}\label{definitionSPBW}

In this section we recall the definition and some basic properties of skew PBW (Poincaré-Birkhoff-Witt) extensions, introduced in \cite{L2}. Some ring-theoretic and homological properties of these class of noncommutative rings have been studied in \cite{L}.\\

 \begin{definition}\label{gpbwextension}
Let $R$ and $A$ be rings. We say that $A$ is a \textit{skew $PBW$ extension of $R$} $($also called a $\sigma-PBW$ extension of $R$$)$ if the following conditions hold:
	\begin{enumerate}
		\item[\rm (i)]$R\subseteq A$.
		\item[\rm (ii)] There exists finitely many elements $x_1,\dots ,x_n\in A$ such $A$ is a left $R$-free module with basis
	
		\begin{center}
	
		${\rm Mon}(A):= \{x^{u}=x_1^{u_1}\cdots x_n^{u_n}\mid u=(u_1,\dots ,u_n)\in \mathbb{N}^n\}$.
		\end{center}

		In this case it also says that \textit{$A$ is a left polynomial ring over $R$} with respect to $\{x_1,\dots,x_n\}$ and $Mon(A)$ is the set of standard monomials of $A$. Moreover, $x_1^0\cdots x_n^0:=1\in Mon(A)$. 
		
		\item[\rm (iii)]For every $1\leq i\leq n$ and $r\in R-\{0\}$ there exists $c_{i,r}\in R-\{0\}$ such that
		\begin{equation}\label{sigmadefinicion1}
			x_ir-c_{i,r}x_i\in R.
		\end{equation}
		
		\item[\rm (iv)]For every $1\leq i,j\leq n$ there exists $c_{i,j}\in R-\{0\}$ such that
		\begin{equation}\label{sigmadefinicion2}
			x_jx_i-c_{i,j}x_ix_j\in R+Rx_1+\cdots +Rx_n.
		\end{equation}
		Under these conditions we will write $A:=\sigma(R)\langle x_1,\dots ,x_n\rangle$.
	\end{enumerate}
\end{definition}

\begin{proposition}\label{sigmadefinition}
Let $A$ be a skew $PBW$ extension of $R$. Then, for every $1\leq i\leq n$, there exists an injective ring endomorphism $\sigma_i:R\rightarrow R$ and a $\sigma_i$-derivation $\delta_i:R\rightarrow R$ such that
	\begin{center}
		$x_ir=\sigma_i(r)x_i+\delta_i(r)$,
	\end{center}
	for each $r\in R$.
\end{proposition}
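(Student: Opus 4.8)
The plan is to read off $\sigma_i$ and $\delta_i$ directly from condition (iii) of Definition \ref{gpbwextension}, and then to verify all the asserted identities by comparing coefficients in the free left $R$-module $A$.

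First I would fix $1\leq i\leq n$ and construct the two maps. For $r\in R-\{0\}$, condition (iii) provides an element $c_{i,r}\in R-\{0\}$ with $x_ir-c_{i,r}x_i\in R$. Since ${\rm Mon}(A)$ is a basis of $A$ as a left $R$-module and both $1=x^0$ and $x_i$ lie in ${\rm Mon}(A)$, the scalar $c_{i,r}$ is \emph{uniquely} determined by $r$: if $c_{i,r}x_i+s=c'_{i,r}x_i+s'$ with $s,s'\in R$, then comparing the coefficient of $x_i$ forces $c_{i,r}=c'_{i,r}$ (and $s=s'$). Hence I may set $\sigma_i(r):=c_{i,r}$ and $\delta_i(r):=x_ir-\sigma_i(r)x_i\in R$ for $r\neq 0$, and $\sigma_i(0):=0=:\delta_i(0)$. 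By construction $x_ir=\sigma_i(r)x_i+\delta_i(r)$ for every $r\in R$, so it remains only to check that $\sigma_i$ is an injective ring endomorphism and $\delta_i$ a $\sigma_i$-derivation.

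Each of the remaining properties follows by expanding $x_i(\,\cdot\,)$ in two ways and invoking uniqueness of coefficients with respect to ${\rm Mon}(A)$. For additivity, $x_i(r+s)=x_ir+x_is=(\sigma_i(r)+\sigma_i(s))x_i+(\delta_i(r)+\delta_i(s))$; comparing with $x_i(r+s)=\sigma_i(r+s)x_i+\delta_i(r+s)$ yields $\sigma_i(r+s)=\sigma_i(r)+\sigma_i(s)$ and $\delta_i(r+s)=\delta_i(r)+\delta_i(s)$. For multiplicativity, associativity in $A$ gives $x_i(rs)=(\sigma_i(r)x_i+\delta_i(r))s=\sigma_i(r)\sigma_i(s)x_i+\sigma_i(r)\delta_i(s)+\delta_i(r)s$; comparing with $x_i(rs)=\sigma_i(rs)x_i+\delta_i(rs)$ gives $\sigma_i(rs)=\sigma_i(r)\sigma_i(s)$ together with $\delta_i(rs)=\sigma_i(r)\delta_i(s)+\delta_i(r)s$, which is precisely the $\sigma_i$-derivation rule. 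From $x_i=x_i\cdot 1=\sigma_i(1)x_i+\delta_i(1)$ one reads off $\sigma_i(1)=1$ (and $\delta_i(1)=0$), so $\sigma_i$ is a ring endomorphism. Injectivity is the one point that uses the full force of (iii): if $\sigma_i(r)=\sigma_i(s)$, then $\sigma_i(r-s)=0$; but (iii) guarantees $\sigma_i(a)=c_{i,a}\neq 0$ whenever $a\neq 0$, hence $r-s=0$. The argument is essentially bookkeeping, and the only substantive ingredient — the step I would be most careful to state cleanly — is the uniqueness of coefficients coming from condition (ii), without which $\sigma_i$ and $\delta_i$ would not even be well defined; once the projections onto the coordinates of $1$ and of $x_i$ are in place, endomorphism-ness, the derivation identity, and injectivity all drop out.
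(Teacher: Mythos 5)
Your proof is correct and complete: defining $\sigma_i$ and $\delta_i$ from condition (iii) of Definition \ref{gpbwextension}, using uniqueness of coefficients in the free basis ${\rm Mon}(A)$ to make them well defined, and then reading off additivity, multiplicativity, the $\sigma_i$-derivation rule, and injectivity by comparing coefficients is exactly the standard argument. The paper itself offers no proof beyond citing \cite{L2}, Proposition 3, and your argument is essentially the one given there, so there is nothing to flag.
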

\begin{proof}
See \cite{L2}, Proposition 3.
\end{proof}

The previous proposition gives the notation and the alternative name given for the skew $PBW$ extensions. 

\begin{definition}\label{sigmapbwderivationtype}
Let $A$ be a skew $PBW$ extension.
	\begin{enumerate}
		\item[\rm (a)] $A$ is quasi-commutative if the conditions {\rm(}iii{\rm)} and {\rm(}iv{\rm)} in Definition \ref{gpbwextension} are replaced by
		
		\begin{enumerate}
			\item[\rm (iii')] For every $1\leq i\leq n$ and $r\in R-\{0\}$ there exists $c_{i,r}\in R-\{0\}$ such that
			\begin{equation}
				x_ir=c_{i,r}x_i.
			\end{equation}
			
		\item[\rm (iv')]For every $1\leq i,j\leq n$ there exists $c_{i,j}\in R-\{0\}$ such that
			\begin{equation}
				x_jx_i=c_{i,j}x_ix_j.
			\end{equation}
		\end{enumerate}
		
		\item[\rm (b)]$A$ is bijective if $\sigma_i$ is bijective for every $1\leq i\leq n$ and $c_{i,j}$ is invertible for any $1\leq i<j\leq n$.
	\end{enumerate}
\end{definition}

\begin{definition}\label{1.1.6}
Let $A$ be a skew $PBW$ extension of $R$ with endomorphisms $\sigma_i$, $1\leq i\leq n$, as in Proposition \ref{sigmadefinition}.

\begin{enumerate}
	\item[\rm (i)] For $u=(u_1,\dots,u_n)\in \mathbb{N}^n$, $\sigma^{u}:=\sigma_1^{u_1}\cdots \sigma_n^{u_n}$, $|u|:=u_1+\cdots+u_n$. If $v=(v_1,\dots,v_n)\in \mathbb{N}^n$, then $u+v:=(u_1+v_1,\dots,u_n+v_n)$. 
	\item[\rm (ii)]For $X=x^{u}\in {\rm Mon}(A)$, $\exp(X):=u$ and $\deg(X):=|u|$.
	\item[\rm (iii)]If $f=c_1X_1+\cdots +c_tX_t$, with $X_i\in Mon(A)$ and $c_i\in R-\{0\}$, then $\deg(f):=\max\{\deg(X_i)\}_{i=1}^t.$
\end{enumerate}
\end{definition}

\begin{theorem}\label{coefficientes}
Let $A$ be a left polynomial ring over $R$ w.r.t. $\{x_1,\dots,x_n\}$. $A$ is a skew $PBW$ extension of $R$ if and only if the following conditions hold:
	\begin{enumerate}
		\item[\rm (a)]For every $x^{u}\in {\rm Mon}(A)$ and every $0\neq r\in R$ there exist unique elements $r_{u}:=\sigma^{u}(r)\in R-\{0\}$ and $p_{u ,r}\in A$ such that
			\begin{equation}\label{611}
				x^{u}r=r_{u}x^{u}+p_{u , r},
			\end{equation}
		where $p_{u ,r}=0$ or $\deg(p_{u ,r})<|u|$ if $p_{u , r}\neq 0$. Moreover, if $r$ is left invertible, then $r_u$ is left invertible.

		\item[\rm (b)]For every $x^{u},x^{v}\in {\rm Mon}(A)$ there exist unique elements $c_{u,v}\in R$ and $p_{u,v}\in A$ such that
			\begin{equation}\label{612}
				x^{u}x^{v}=c_{u,v}x^{u+v}+p_{u,v},
			\end{equation}
		where $c_{u,v}$ is left invertible, $p_{u,v}=0$ or $\deg(p_{u,v})<|u+v|$ if $p_{u,v}\neq 0$.
	\end{enumerate}
\begin{proof}
See \cite{L2}, Theorem 7.
\end{proof}
\end{theorem}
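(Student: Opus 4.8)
The plan is to prove the two implications separately; the forward direction (a skew $PBW$ extension satisfies (a) and (b)) does all the real work, while the converse is essentially a specialization. For the converse, suppose (a) and (b) hold: conditions (i)--(ii) of Definition \ref{gpbwextension} are already the hypothesis that $A$ is a left polynomial ring over $R$. To obtain (iii), set $u=e_i$ in (a): then $x_ir=\sigma_i(r)x_i+p_{e_i,r}$ with $\sigma_i(r)\in R-\{0\}$ and $\deg p_{e_i,r}<1$, so $p_{e_i,r}\in R$ and $c_{i,r}:=\sigma_i(r)$ does the job. To obtain (iv), set $(u,v)=(e_j,e_i)$ in (b): since the standard monomial $x^{e_i+e_j}$ equals $x_ix_j$ when $i<j$, one gets $x_jx_i=c_{e_j,e_i}x_ix_j+p_{e_j,e_i}$ with $\deg p_{e_j,e_i}<2$, hence $p_{e_j,e_i}\in R+Rx_1+\cdots+Rx_n$; take $c_{i,j}:=c_{e_j,e_i}\in R-\{0\}$, and use its left-invertibility (granted by (b)) to solve for the relation with the roles of $i$ and $j$ reversed.

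For the forward direction, start from Proposition \ref{sigmadefinition}: each $x_i$ satisfies $x_ir=\sigma_i(r)x_i+\delta_i(r)$ with $\sigma_i$ an injective ring endomorphism and $\delta_i$ a $\sigma_i$-derivation. I would prove (a), (b), and the auxiliary degree estimate $\deg(x^uf)\le|u|+\deg(f)$ ($f\in A$) together, by induction on $|u|$, with a secondary induction on the exponent vector for the base case $|u|=1$ of part (b). The case $|u|=0$ is trivial. In the inductive step write $x^u=x_ix^{u-e_i}$, where $i$ is the \emph{least} index with $u_i>0$; minimality forces $x^{u-e_i}=x_i^{u_i-1}x_{i+1}^{u_{i+1}}\cdots x_n^{u_n}$, so $x_ix^{u-e_i}=x^u$ exactly and $\sigma_i\sigma^{u-e_i}=\sigma^u$. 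For (a): expand $x^{u-e_i}r=\sigma^{u-e_i}(r)x^{u-e_i}+p$ (with $\deg p<|u|-1$) by the inductive hypothesis, multiply on the left by $x_i$, move the scalar past $x_i$ via Proposition \ref{sigmadefinition}, and collect; the leading term is $\sigma^u(r)x^u$, and the remainder $\delta_i(\sigma^{u-e_i}(r))x^{u-e_i}+x_ip$ has degree $\le|u|-1$ by the degree estimate. Since the $\sigma_i$ are injective, $\sigma^u(r)\ne0$; and $\sigma^u(r)$ is left invertible whenever $r$ is, because $sr=1$ yields $\sigma^u(s)\sigma^u(r)=1$. Part (b) is the same computation with $x^v$ in place of $r$: expand $x^{u-e_i}x^v$, multiply by $x_i$, and rewrite $x_ix^{u-e_i+v}$ as $c\,x^{u+v}+(\text{lower-order terms})$, where $c$ is produced from the relations (iv) and so stays left invertible; then $c_{u,v}=\sigma_i(c_{u-e_i,v})\,c$, and the degree bound on $p_{u,v}$ once more follows from the auxiliary estimate. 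Uniqueness of $r_u,p_{u,r},c_{u,v},p_{u,v}$ is immediate because ${\rm Mon}(A)$ is a left $R$-basis of $A$: two competing expressions would subtract to a nontrivial $R$-linear relation among distinct standard monomials.

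The main obstacle is the bookkeeping concealed in the step ``rewrite $x_ix^{u-e_i+v}$ as $c\,x^{u+v}+(\text{lower-order terms})$'': when $u-e_i+v$ has a positive coordinate at some index $<i$, commuting $x_i$ rightward through the lower-indexed factors requires repeated use of (iv), and each swap spawns correction terms in $R+Rx_1+\cdots+Rx_n$ which must themselves be re-multiplied by the remaining variables, all while keeping degrees and the left-invertibility of the accumulating coefficient under control. Making this precise --- that is, proving the degree estimate in lockstep with (a) and (b), with all the index conventions handled cleanly --- is the technical core of the statement; the details are carried out in \cite{L2}, Theorem 7.
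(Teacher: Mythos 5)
Your outline is correct and follows the same route as the source the paper itself defers to: the paper gives no argument beyond ``See \cite{L2}, Theorem 7'', and that proof is exactly your plan --- the converse by specializing (a) and (b) to the unit vectors $e_i$ and pairs $(e_j,e_i)$, the forward direction by induction on $|u|$ via $x^u=x_ix^{u-e_i}$ with a degree estimate proved in lockstep. One small point you assert but do not justify (and which is not literally in Definition \ref{gpbwextension}(iv), where $c_{i,j}$ is only required to be nonzero): the left-invertibility of the accumulating coefficient; it follows in one line by substituting $x_jx_i=c_{i,j}x_ix_j+p$ into $x_ix_j=c_{j,i}x_jx_i+p'$ and comparing coefficients of the basis element $x_ix_j$, which gives $c_{j,i}c_{i,j}=1$, after which products and $\sigma$-images of left-invertible elements remain left invertible.
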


\begin{proposition}\label{dominio}
Let A be a skew PBW extension of a ring R. If R is a domain, then A is a domain.
\end{proposition}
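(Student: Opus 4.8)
The goal is to show that if $R$ is a domain then $A = \sigma(R)\langle x_1,\dots,x_n\rangle$ is a domain. The natural tool is the filtration of $A$ by total degree: for $m \geq 0$ let $F_m := \{ f \in A : \deg(f) \leq m\}$, with $F_{-1} := 0$. First I would verify this is an exhaustive ring filtration, i.e. $F_m F_l \subseteq F_{m+l}$ and $\bigcup_m F_m = A$. The inclusion is a direct consequence of Theorem \ref{coefficientes}: writing two elements as $R$-linear combinations of standard monomials and multiplying, equations (\ref{611}) and (\ref{612}) show that the product of a monomial of degree $\leq m$ and one of degree $\leq l$ lands in $F_{m+l}$, since the ``correction'' terms $p_{u,r}$ and $p_{u,v}$ have strictly smaller degree. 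Exhaustiveness is clear because $\mathrm{Mon}(A)$ is an $R$-basis.

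The key step is to identify the associated graded ring $\mathrm{gr}(A) := \bigoplus_{m \geq 0} F_m / F_{m-1}$ and show it is a domain; then the standard fact that a ring with an exhaustive filtration whose associated graded ring is a domain is itself a domain finishes the argument. To compute $\mathrm{gr}(A)$, observe that $F_m / F_{m-1}$ is a free left $R$-module with basis the images of the monomials $x^u$ with $|u| = m$. The multiplication on $\mathrm{gr}(A)$ is induced by that of $A$, and passing to the top-degree part kills the lower-order terms: from (\ref{611}), $\overline{x^u}\,\overline{r} = \overline{\sigma^u(r)}\,\overline{x^u}$, and from (\ref{612}), $\overline{x^u}\,\overline{x^v} = \overline{c_{u,v}}\,\overline{x^{u+v}}$ with $c_{u,v}$ left invertible (in fact, for the domain statement one only needs $c_{u,v} \neq 0$, which holds since $R$ is a domain). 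Hence $\mathrm{gr}(A)$ is a quasi-commutative skew $PBW$ extension of $R$ in the sense of Definition \ref{sigmapbwderivationtype}(a).

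So the problem reduces to: a quasi-commutative skew $PBW$ extension $B$ of a domain $R$ is a domain. Here I would argue directly on leading terms. Order $\mathrm{Mon}(B)$ by, say, the degree-lexicographic order. Given nonzero $f = \sum_{i} a_i x^{u_i}$ and $g = \sum_j b_j x^{v_j}$ with $a_i, b_j \in R\setminus\{0\}$, pick the largest monomials $x^{u}$ in $f$ and $x^{v}$ in $g$. In the product $fg$, using the quasi-commutative rules $x^{u}\, b = \sigma^{u}(b)\, x^{u}$ and $x^{u} x^{v} = c_{u,v} x^{u+v}$, every term is an $R$-multiple of a single monomial $x^{u_i + v_j}$, and the coefficient of the top monomial $x^{u+v}$ is $a\,\sigma^{u}(b)\, c_{u,v}$ (after collecting: no other pair $(u_i,v_j)$ can produce $x^{u+v}$ by maximality, since the order is compatible with addition). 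Since $R$ is a domain, $\sigma^u$ is injective (composition of injective endomorphisms, Proposition \ref{sigmadefinition}), and $c_{u,v} \neq 0$, this leading coefficient is nonzero, so $fg \neq 0$.

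The main obstacle is the bookkeeping in the last step: making sure the degree-compatible monomial order guarantees that the maximal monomials of $f$ and $g$ multiply to a monomial strictly larger than all other cross-products, so that no cancellation can occur at the top. This is where one must be careful that $u_i + v_j < u + v$ whenever $(u_i, v_j) \neq (u,v)$ — which follows from the order being a total order compatible with the monoid structure of $\mathbb{N}^n$. Everything else is routine verification from Theorem \ref{coefficientes}.
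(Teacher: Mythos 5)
Your argument is correct, and it is worth noting that the paper itself gives no in-text proof of Proposition \ref{dominio} --- it simply cites \cite{L} --- so the comparison is with the machinery the paper records around it. Your route (filtration by total degree, identification of the associated graded ring as a quasi-commutative skew $PBW$ extension, then a leading-term argument in the quasi-commutative case, and finally the standard filtered-to-graded transfer of the domain property) is sound: the compatibility $F_mF_l\subseteq F_{m+l}$ and the form of the graded multiplication do follow from Theorem \ref{coefficientes}, the injectivity of $\sigma^u$ comes from Proposition \ref{sigmadefinition}, $c_{u,v}\neq 0$ is already guaranteed by its left invertibility in Theorem \ref{coefficientes} (no domain hypothesis needed there), and degree-lexicographic order is a total monoid order on $\mathbb{N}^n$, so no cancellation can occur at the top monomial. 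Two remarks on economy: first, the filtration step is exactly Theorem \ref{filtrado} of the paper, so you are silently re-proving a quoted result rather than citing it; second, once you know $Gr(A)$ is quasi-commutative you could replace your leading-term computation by Theorem \ref{iterados}(i), which identifies a quasi-commutative extension with an iterated skew polynomial ring of (injective) endomorphism type over $R$, and such a ring over a domain is a domain by the usual leading-coefficient argument in one variable at a time. Alternatively, your leading-term argument could be run directly in $A$ itself using a degree-compatible order, since the correction terms $p_{u,r}$, $p_{u,v}$ in Theorem \ref{coefficientes} have strictly smaller total degree; this would bypass the graded ring altogether. Either way, what your approach buys is a self-contained proof inside the paper, whereas the paper's choice is simply to defer to \cite{L}.
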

\begin{proof}
See \cite{L}.
\end{proof}

The next theorem characterizes the quasi-commutative skew $PBW$ extensions.

\begin{theorem}\label{iterados}
Let $A$ be a quasi-commutative skew $PBW$ extension of a ring $R$. Then,
	\begin{enumerate}
		\item[\rm (i)] $A$ is isomorphic to an iterated skew polynomial ring of endomorphism type, i.e.,
		\begin{center}
			$A\cong R[z_1;\theta_1]\cdots [z_{n};\theta_n]$.
		\end{center}
		\item[\rm (ii)] If $A$ is bijective, then each endomorphism $\theta_i$ is bijective, $1\leq i\leq n$.
	\end{enumerate}
\end{theorem}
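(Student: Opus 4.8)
The plan is to build the isomorphism $A\cong R[z_1;\theta_1]\cdots[z_n;\theta_n]$ one variable at a time, adjoining $x_j$ via the universal property of skew polynomial rings of endomorphism type and letting the PBW basis $\mathrm{Mon}(A)$ from Theorem \ref{coefficientes} carry the weight. First I would introduce, for $0\le j\le n$, the subring $A_j\subseteq A$ generated by $R$ and $x_1,\dots,x_j$, so that $A_0=R$ and $A_n=A$. Because $A$ is quasi-commutative ($x_ir=\sigma_i(r)x_i$ and $x_jx_i=c_{i,j}x_ix_j$ for all $i,j$), every word in $R$ and $x_1,\dots,x_j$ rewrites as a left $R$-combination of $\{x_1^{u_1}\cdots x_j^{u_j}\}$; hence $A_j$ is the left $R$-submodule of $A$ spanned by those monomials and is itself a quasi-commutative skew $PBW$ extension $\sigma(R)\langle x_1,\dots,x_j\rangle$ with PBW basis $\mathrm{Mon}(A_j)$.

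Next I would produce the endomorphisms $\theta_j$. Fix $1\le j\le n$ and $a=\sum_u r_ux^u\in A_{j-1}$, the sum running over $u\in\Nn$ supported on the first $j-1$ coordinates. Quasi-commutativity together with Theorem \ref{coefficientes}(b) gives $x_jx^u=c_{e_j,u}\,x^{u+e_j}=c_{e_j,u}\,x^ux_j$ with $c_{e_j,u}\in R$ left invertible and $e_j$ the $j$-th canonical vector, whence $x_ja=\theta_j(a)x_j$ where $\theta_j(a):=\sum_u\sigma_j(r_u)\,c_{e_j,u}\,x^u\in A_{j-1}$. Since $\mathrm{Mon}(A)$ is a left $R$-basis of $A$, this $\theta_j(a)$ is the unique element of $A_{j-1}$ with $x_ja=\theta_j(a)x_j$, and from this uniqueness $\theta_j\colon A_{j-1}\to A_{j-1}$ is forced to be a ring endomorphism: e.g. $\theta_j(ab)x_j=x_jab=\theta_j(a)x_jb=\theta_j(a)\theta_j(b)x_j$, and additivity is analogous. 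One records that $\theta_j|_R=\sigma_j$ and $\theta_j(x_i)=c_{i,j}x_i$ for $i<j$.

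With $\theta_j$ in hand, the relation $x_ja=\theta_j(a)x_j$ is precisely what the universal property of $A_{j-1}[z_j;\theta_j]$ needs, so there is a ring homomorphism $\phi_j\colon A_{j-1}[z_j;\theta_j]\to A_j$ fixing $A_{j-1}$ and sending $z_j\mapsto x_j$. It is surjective since $R\cup\{x_1,\dots,x_j\}$ generates $A_j$, and injective because $\phi_j\big(\sum_k b_kz_j^k\big)=\sum_k b_kx_j^k$ becomes, after expanding each $b_k\in A_{j-1}$ over $\mathrm{Mon}(A_{j-1})$, a left $R$-combination of the pairwise distinct monomials $x^ux_j^k\in\mathrm{Mon}(A)$, hence vanishes only when all $b_k=0$. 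Thus $A_j\cong A_{j-1}[z_j;\theta_j]$. Part (i) then follows by induction on $j$: $A_1\cong R[z_1;\sigma_1]$, and if $\psi_{j-1}\colon R[z_1;\theta_1]\cdots[z_{j-1};\theta_{j-1}]\to A_{j-1}$ is the isomorphism built so far, I transport $\theta_j$ along $\psi_{j-1}$ to an endomorphism of $R[z_1;\theta_1]\cdots[z_{j-1};\theta_{j-1}]$ and compose, obtaining $A_j\cong R[z_1;\theta_1]\cdots[z_j;\theta_j]$; with $j=n$ this is (i).

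For (ii), assume $A$ is bijective, so every $\sigma_i$ is bijective on $R$ and every $c_{i,j}$ with $i<j$ is invertible; then each $c_{e_j,u}$, being a $\sigma$-twisted product of such $c_{i,j}$, is invertible as well. Injectivity of $\theta_j$ is then immediate ($\theta_j(a)=0$ forces $\sigma_j(r_u)c_{e_j,u}=0$, hence $\sigma_j(r_u)=0$, hence $r_u=0$), and surjectivity follows because $\mathrm{im}\,\theta_j$ is a subring containing $\sigma_j(R)=R$ and, since $\theta_j\big(\sigma_j^{-1}(c_{i,j}^{-1})x_i\big)=x_i$, also $x_1,\dots,x_{j-1}$, so $\mathrm{im}\,\theta_j=A_{j-1}$; bijectivity is preserved by the transport above, which gives (ii). The one step that requires genuine care is the construction of $\theta_j$ — showing that conjugation by $x_j$ sends $A_{j-1}$ into itself and defines a ring endomorphism — and this is exactly where quasi-commutativity and the uniqueness of PBW expansions from Theorem \ref{coefficientes} are indispensable; everything else is routine bookkeeping, the mildest nuisance being to keep the transported $\theta_j$'s straight through the induction in (i).
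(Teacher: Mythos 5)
Your proof is correct, and it follows the standard route: the paper itself gives no argument for this theorem but simply cites Lezama--Reyes \cite{L}, where the result is proved by essentially the same inductive construction you carry out (conjugation by $x_j$ defines an endomorphism $\theta_j$ of the subring generated by $R,x_1,\dots,x_{j-1}$, the PBW basis gives injectivity of the comparison map, and bijectivity of the $\sigma_i$ together with invertibility of the $c_{i,j}$ yields bijectivity of each $\theta_j$). Your handling of the two points needing care --- well-definedness of $\theta_j$ via uniqueness of PBW expansions, and invertibility of the twisted products $c_{e_j,u}$ in the bijective case --- is sound.
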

\begin{proof}
See \cite{L}.
\end{proof}
\begin{corollary}\label{oreccb}
Let $A$ be a bijective and quasi-commutative  skew $PBW$ extension of a ring $R$. If $R$ is a left Ore domain, then $A$ is a left Ore domain.
\end{corollary}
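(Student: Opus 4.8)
The plan is to reduce the statement, via Theorem \ref{iterados}, to a one–variable situation and then invoke the classical behaviour of skew polynomial rings. By Theorem \ref{iterados}(i) there is an isomorphism $A\cong R[z_1;\theta_1]\cdots[z_n;\theta_n]$ onto an iterated skew polynomial ring of endomorphism type, where $\theta_i$ is an endomorphism of $R[z_1;\theta_1]\cdots[z_{i-1};\theta_{i-1}]$, and by Theorem \ref{iterados}(ii), since $A$ is bijective, each such $\theta_i$ is in fact an automorphism; also $A$ is a domain by Proposition \ref{dominio}. Hence, by induction on the number of variables, it suffices to prove: \emph{if $S$ is a left Ore domain and $\theta$ is an automorphism of $S$, then $S[z;\theta]$ is a left Ore domain.}

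To prove this, let $D$ be the division ring of left fractions of $S$. Since $\theta$ fixes $0$ and is bijective it maps $S\setminus\{0\}$ onto itself, so it extends to an automorphism $\bar\theta$ of $D$ by $\bar\theta(s^{-1}r)=\theta(s)^{-1}\theta(r)$. Then $S[z;\theta]\subseteq D[z;\bar\theta]$, and $D[z;\bar\theta]$ is a left principal ideal domain (it admits a division algorithm by degree, because the leading coefficients lie in the division ring $D$), hence a left Ore domain; write $Q$ for its division ring of left fractions, so $S[z;\theta]\subseteq Q$. The key point is that \emph{every element of $Q$ is of the form $f^{-1}g$ with $f,g\in S[z;\theta]$}. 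Indeed, any $p=\sum_i d_iz^i\in D[z;\bar\theta]$ with $d_i=s_i^{-1}r_i$ can be cleared from the left: in a left Ore domain a finite intersection of nonzero left ideals is nonzero, so there is $0\neq s\in\bigcap_iSs_i$, say $s=u_is_i$, whence $sp=\sum_i(u_ir_i)z^i\in S[z;\theta]$ and $p=s^{-1}(sp)$ with $s\in S\setminus\{0\}\subseteq S[z;\theta]$. Now an arbitrary element of $Q$ is $P^{-1}G$ with $P,G\in D[z;\bar\theta]$; writing $P=s^{-1}f$, $G=t^{-1}g$ with $f,g\in S[z;\theta]$ as above, and then $st^{-1}=u^{-1}v$ in $D$, we obtain $P^{-1}G=f^{-1}st^{-1}g=(uf)^{-1}(vg)$ with $uf,vg\in S[z;\theta]$.

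Finally, given nonzero $a,b\in S[z;\theta]$, write $ba^{-1}=f^{-1}g$ in $Q$ with $f,g\in S[z;\theta]$; then $fb=ga$, computed in $Q$, is nonzero (as $Q$ is a division ring and $f,b\neq 0$) and lies in $S[z;\theta]a\cap S[z;\theta]b$, so $S[z;\theta]$ satisfies the left Ore condition; it is a domain, being a subring of $Q$, so it is a left Ore domain with $Q$ as its division ring of fractions. Running this argument along the tower $R\subseteq R[z_1;\theta_1]\subseteq\cdots\subseteq A$ (using Theorem \ref{iterados}(ii) to guarantee that each $\theta_i$ is an automorphism of the preceding ring) shows that $A$ is a left Ore domain. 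The only real obstacle is the clearing–of–denominators step, i.e.\ controlling the passage between $D[z;\bar\theta]$ and $S[z;\theta]$ coefficientwise; this rests only on the elementary fact that finite intersections of nonzero left ideals in a left Ore domain are nonzero, and everything else is the standard transfer of the Ore property to a skew polynomial extension with bijective twist.
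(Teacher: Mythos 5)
Your proposal is correct and follows essentially the same route as the paper: both reduce via Theorem \ref{iterados} to an iterated skew polynomial ring of automorphism type over the left Ore domain $R$. The only difference is that the paper simply cites this reduction and the standard transfer of the left Ore property to $S[z;\theta]$ with $\theta$ an automorphism, whereas you supply a complete (and correct) proof of that classical step by embedding into $Q(S)[z;\bar\theta]$ and clearing denominators.
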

\begin{proof}
By Theorem \ref{iterados}, $A$ is isomorphic to an iterated skew polynomial ring of automorphism type over a left Ore domain $R$.
\end{proof}

\begin{theorem}\label{filtrado}
Let $A$ be an arbitrary skew $PBW$ extension of $R$. Then, $A$ is a filtered ring with filtration given by
	\begin{equation}\label{eq1.3.1a}
		F_m:=\begin{cases} R & {\rm if}\ \ m=0\\ \{f\in A\mid {\rm deg}(f)\le m\} & {\rm if}\ \ m\ge 1
	\end{cases}
	\end{equation}
and the corresponding graded ring $Gr(A)$ is a quasi-commutative skew $PBW$ extension of $R$.
Moreover, if $A$ is bijective, then $Gr(A)$ is a quasi-commutative bijective skew $PBW$ extension of $R$.
\end{theorem}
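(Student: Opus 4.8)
The plan is to check the filtration axioms directly and then compute the associated graded ring term by term, reading off its structure constants from Theorem \ref{coefficientes} and Proposition \ref{sigmadefinition}.

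First I would establish that $\{F_m\}_{m\ge 0}$ is a ring filtration. Since $A$ is a free left $R$-module on $\mathrm{Mon}(A)=\{x^u\}$ and $\deg(f)$ is the largest degree of a standard monomial occurring in $f$, we have $F_m=\bigoplus_{|u|\le m}Rx^u$; hence $F_0=R$, $1\in F_0$, the $F_m$ form an ascending chain, and $\bigcup_{m\ge 0}F_m=A$. The one substantive point is $F_mF_l\subseteq F_{m+l}$. From Theorem \ref{coefficientes}(b), $x_ix^u=x^{e_i}x^u\in F_{1+|u|}$, and from Proposition \ref{sigmadefinition}, $x_i(cx^u)=\sigma_i(c)x_ix^u+\delta_i(c)x^u\in F_{1+|u|}$ for $c\in R$; summing over the monomials of an element of $F_l$ gives $x_iF_l\subseteq F_{l+1}$, and iterating over the letters of $x^u$ gives $x^uF_l\subseteq F_{|u|+l}$. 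Since each $F_s$ is a left $R$-submodule, writing a general element of $F_m$ as $\sum_{|u|\le m}c_ux^u$ yields $F_mF_l\subseteq\sum_{|u|\le m}c_u(x^uF_l)\subseteq F_{m+l}$.

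Next I would identify $Gr(A):=\bigoplus_{m\ge 0}F_m/F_{m-1}$ (with $F_{-1}:=0$). The description $F_m=\bigoplus_{|u|\le m}Rx^u$ shows that $F_m/F_{m-1}$ is a free left $R$-module on the images $\overline{x^u}$ with $|u|=m$; in particular $Gr(A)_0=R$, so $R\subseteq Gr(A)$. Writing $\bar x_i$ for the class of $x_i$ in $F_1/F_0$ and using that the product in $Gr(A)$ sends $\bar a\,\bar b$ to $\overline{ab}$, one gets $\bar x_1^{u_1}\cdots\bar x_n^{u_n}=\overline{x^u}$, so $\mathrm{Mon}(Gr(A))=\{\bar x^u:u\in\mathbb{N}^n\}$ is a left $R$-basis of $Gr(A)$; this is conditions (i)--(ii) of Definition \ref{gpbwextension}. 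For quasi-commutativity, $x_ir=\sigma_i(r)x_i+\delta_i(r)$ with $\delta_i(r)\in F_0$ and $\sigma_i(r)x_i\in F_1$, so in $F_1/F_0$ we obtain $\bar x_i\bar r=\sigma_i(r)\bar x_i$, which is condition (iii$'$) of Definition \ref{sigmapbwderivationtype} with $c_{i,r}=\sigma_i(r)$, nonzero because $\sigma_i$ is injective; likewise Definition \ref{gpbwextension}(iv) gives $x_jx_i-c_{i,j}x_ix_j\in F_1$ with both $x_jx_i$ and $c_{i,j}x_ix_j$ in $F_2$, so passing to $F_2/F_1$ gives $\bar x_j\bar x_i=c_{i,j}\bar x_i\bar x_j$ with the same $c_{i,j}\in R-\{0\}$, which is condition (iv$'$). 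Thus $Gr(A)$ is a quasi-commutative skew $PBW$ extension of $R$, and along the way one sees that the endomorphisms attached to $Gr(A)$ by Proposition \ref{sigmadefinition} are exactly the $\sigma_i$ (with zero derivations) and that the relevant structure constants are the $c_{i,j}$. Hence if $A$ is bijective --- each $\sigma_i$ an automorphism and each $c_{i,j}$ with $i<j$ a unit --- then $Gr(A)$ inherits precisely these data and is bijective as well.

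The only place where real care is needed is the multiplicativity $F_mF_l\subseteq F_{m+l}$ of the filtration, since that is where the degree estimates of Theorem \ref{coefficientes} (namely $\deg(p_{u,r})<|u|$ and $\deg(p_{u,v})<|u+v|$) enter; once the filtration is in hand, everything else is a matter of evaluating the defining relations of $A$ modulo one filtration step.
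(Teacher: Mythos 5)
Your proof is correct. The paper itself gives no argument for this theorem---it simply cites \cite{L}---and your argument is essentially the standard one used there: establish $F_mF_l\subseteq F_{m+l}$ from the degree estimates of Theorem \ref{coefficientes} together with Proposition \ref{sigmadefinition}, then verify conditions (i)--(ii) of Definition \ref{gpbwextension} and (iii$'$), (iv$'$) of Definition \ref{sigmapbwderivationtype} for $Gr(A)$ by reducing the defining relations of $A$ modulo one filtration step, noting that the resulting endomorphisms are the $\sigma_i$ (with zero derivations) and the constants are the same $c_{i,j}$, which immediately yields the bijective case as well.
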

\begin{proof}
See \cite{L}.
\end{proof}

\begin{theorem}[Hilbert Basis Theorem]\label{hilbert}
Let $A$ be a bijective skew $PBW$ extension of $R$. If $R$ is a left $($right$)$ Noetherian ring then $A$ is also a left $($right$)$ Noetherian ring.
\end{theorem}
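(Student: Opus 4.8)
The plan is to exploit the filtration on $A$ furnished by Theorem \ref{filtrado}, reduce to the classical Hilbert Basis Theorem for skew polynomial rings of automorphism type via Theorem \ref{iterados}, and then transfer Noetherianity from the associated graded ring back to $A$. I would carry out the argument for the left case; the right case is entirely symmetric.

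First, by Theorem \ref{filtrado}, $A$ is a filtered ring with the $\mathbb{N}$-filtration $\{F_m\}_{m\geq 0}$ displayed in \eqref{eq1.3.1a}; this filtration is exhaustive, since every $f\in A$ has finite degree so $\bigcup_{m\geq 0}F_m=A$, and it is separated with $F_{-1}=0$. Moreover, since $A$ is bijective, the same theorem tells us that $Gr(A)$ is a quasi-commutative bijective skew $PBW$ extension of $R$. By Theorem \ref{iterados}, $Gr(A)$ is then isomorphic to an iterated skew polynomial ring $R[z_1;\theta_1]\cdots[z_n;\theta_n]$ in which every $\theta_i$ is an automorphism. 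Applying the classical Hilbert Basis Theorem for an Ore extension $S\mapsto S[z;\theta]$ with $\theta$ bijective (if $S$ is left Noetherian then so is $S[z;\theta]$) $n$ times, starting from the left Noetherian ring $R$, we conclude that $Gr(A)$ is left Noetherian.

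Finally I would invoke the standard filtered--graded transfer principle: if $B$ carries an exhaustive separated $\mathbb{N}$-filtration whose associated graded ring $Gr(B)$ is left Noetherian, then $B$ is left Noetherian. Concretely, given a left ideal $I\subseteq A$, form the associated graded left ideal $Gr(I)=\bigoplus_{m\geq 0}\big((I\cap F_m)/(I\cap F_{m-1})\big)$ of $Gr(A)$; it is finitely generated, say by homogeneous elements which are the principal symbols of finitely many $a_1,\dots,a_t\in I$. A degree induction on the filtration then shows $I=Aa_1+\cdots+Aa_t$: given $a\in I\cap F_m$, subtract a suitable $A$-combination of the $a_i$ to strictly lower the filtration degree, and iterate, using exhaustiveness and separatedness to terminate. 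Hence $A$ is left Noetherian.

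The main obstacle is the filtered--graded transfer step: one must verify that the filtration is well behaved enough (exhaustive, separated, discrete) for the symbols of a finite generating set of $Gr(I)$ to lift to a finite generating set of $I$, and in the noncommutative setting one must be careful that the induction on degree really terminates. Once this framework is in place the induction is routine, and the underlying skew-polynomial Hilbert Basis step is classical; the bijectivity hypothesis on $A$ is exactly what guarantees the $\theta_i$ are automorphisms so that the one-sided Hilbert Basis Theorem applies.
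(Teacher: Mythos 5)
Your argument is correct and is essentially the proof the paper relies on: the paper itself only cites \cite{L}, and the proof given there proceeds exactly as you do, using the filtration of Theorem \ref{filtrado}, the iterated skew polynomial presentation of the quasi-commutative bijective extension $Gr(A)$ from Theorem \ref{iterados} together with the classical skew Hilbert Basis Theorem, and the standard filtered--graded transfer of Noetherianity for exhaustive positive filtrations.
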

\begin{proof}
See \cite{L}.
\end{proof}

\subsection{Skew quantum polynomials}\label{definitionSQP}
In this subsection we recall the definition and some basic properties of skew quantum polynomials  ring over $R$, introduced in \cite{L}.We mention some results generalized for valuations of skew quantum polynomials and  bijective and quasi-commutative  skew $PBW$ extension.

\begin{definition}
Let $R$ be a ring with matrix of parameters $q:=[q_{ij}]\in M_n(R)$, $n\geq 2$, such that $q_{ii}=1=q_{ij}q_{ji}=q_{ji}q_{ij}$ for each $1\leq i,j \leq n$ and suppose also that is given a system $\sigma_1,\ldots, \sigma_n $ of automorphisms of $R$. The  \textit{skew quantum polynomials  ring over $R$}, denoted by

\begin{equation}
R_{\q,\sigma}[x_1^{\pm 1},\ldots,x_r^{\pm 1},x_{r+1},\ldots, x_n],
\end{equation}

is defined whit the following conditions:

\begin{enumerate}
	\item[i)] $R\subseteq R_{\q,\sigma}[x_1^{\pm 1},\ldots,x_r^{\pm 1},x_{r+1},\ldots, x_n],$
	
	\item[ii)] $R_{\q,\sigma}[x_1^{\pm 1},\ldots,x_r^{\pm 1},x_{r+1},\ldots, x_n]$ is a free left $R-$module with basis $\{x^u;x^u = x_1^{u_1}\cdots x_n^{u_n}, u_i\in \Z \text{, }1\leq i \leq r \text{ and } u_i\in \N \text{ for }r+1\leq i \leq n \},$

	\item[iii)] The $x_1,\ldots,x_n$ elements satisfy the defining relations
		\begin{gather}
			x_ix_i^{-1}=1=x_i^{-1}x_i, \text{ } 1\leq i \leq r,\label{polc1}\\
			x_ix_j=q_{ji}x_jx_i \text{ } 1 \leq i,j\leq n, \label{polc2}\\
			x_ir=\sigma_i(r)x_i, \text{ } r\in R\text{ y } 1 \leq i\leq n. \label{polc3}
		\end{gather}
\end{enumerate}
\end{definition}

When all automorphisms are trivial, we write $R_{\q}[x_1^{\pm 1},\ldots,x_r^{\pm 1},$ $x_{r+1},$ $\ldots, $ $x_n]$ and this ring is called  \textit{the ring of quantum polynomials over $R$.} If $R=K$ is a field, then $K_{\q,\sigma}[x_1^{\pm 1},\ldots,x_r^{\pm 1},x_{r+1},$ $\ldots, x_n]$ is the \textit{algebra of skew quantum polynomials}. For  trivial automorphisms we get the \textit{algebra of quantum polynomials simply}.\\

If $r=n$, $R_{\q,\sigma}[x_1^{\pm 1},\ldots,x_n^{\pm 1}]$ is called the \textit{n-multiparametric skew quantum torus over $R$}, when all automorphisms are trivial, is called the \textit{$n-$multipara\-metric quantum torus over $R$}. If $r=0$, $R_{\q,\sigma}[x_1,\ldots, x_n]$ is called the \textit{n-multiparametric skew quantum space over $R$}, when all automorphisms are trivial is called \textit{ $n-$multiparametric quantum space over $R$}.\\

The algebra of quantum polynomials can be defined as a quasi-commutative bijective skew $PBW$ extension of the $r$-multiparameter quantum torus, or also, as a localization of a
quasi-commutative bijective skew $PBW$ extension.

\begin{theorem}\label{tpct1}
$R_{\q,\sigma}[x_1,\ldots,x_n]\cong R[z_1;\theta_1]\cdots[z_n;\theta_n]$, where
\begin{equation*}
	\begin{array}{l}
		\text{ }\text{ }i) \text{ }\theta_1=\sigma_1, \\
		\text{ }ii) \text{ } \theta_i:R[z_1;\theta_1]\cdots[z_{i-1};\theta_{i-1}]\rightarrow R[z_1;\theta_1]\cdots[z_{i-1};\theta_{i-1}], \\
		iii) \text{ }\theta_i(z_i)=q_{ij}z_i, 1\leq i<j\leq n, \theta_i(r)=\sigma_i(r)\text{ for } r\in R.
	\end{array}
	\end{equation*}
In particular,  $R_{\q}[x_1,\ldots,x_n]\cong R[z_1]\cdots[z_n;\theta_n]$.
\end{theorem}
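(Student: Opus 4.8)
The plan is to build the isomorphism $R_{\q,\sigma}[x_1,\dots,x_n]\cong R[z_1;\theta_1]\cdots[z_n;\theta_n]$ by realizing the skew quantum polynomial ring (with $r=0$, i.e.\ the multiparametric skew quantum space) as a quasi-commutative bijective skew $PBW$ extension of $R$ and then invoking Theorem~\ref{iterados}. First I would check that $R_{\q,\sigma}[x_1,\dots,x_n]$, with the generators $x_1,\dots,x_n$ and the defining relations \eqref{polc2} and \eqref{polc3} (with $r=0$, so no inverses), satisfies Definition~\ref{gpbwextension}: condition (i) is immediate, condition (ii) is exactly the stated freeness on the basis $\{x^u\}$, and conditions (iii$'$) and (iv$'$) of Definition~\ref{sigmapbwderivationtype}(a) hold because $x_ir=\sigma_i(r)x_i$ gives $c_{i,r}=\sigma_i(r)$ and $x_jx_i=q_{ij}x_ix_i$… — more precisely, rewriting \eqref{polc2} as $x_jx_i=q_{ij}x_ix_j$ for $i<j$ gives $c_{i,j}=q_{ij}\in R^*$. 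Hence $A:=R_{\q,\sigma}[x_1,\dots,x_n]$ is a quasi-commutative skew $PBW$ extension of $R$, and it is bijective because each $\sigma_i$ is an automorphism by hypothesis and each $q_{ij}$ is invertible (as $q_{ij}q_{ji}=1$).

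Next I would apply Theorem~\ref{iterados}(i) and (ii) to conclude $A\cong R[z_1;\theta_1]\cdots[z_n;\theta_n]$ for some bijective endomorphisms $\theta_i$. It then remains to identify the $\theta_i$ explicitly as in (i)–(iii) of the statement. For this I would track the image of $x_i$ under the isomorphism: sending $x_i\mapsto z_i$, the relation $x_i r=\sigma_i(r)x_i$ forces $\theta_i|_R=\sigma_i$, which is (i) and the second half of (iii); and the relation $x_ix_j=q_{ij}x_jx_i$ for $i<j$, read inside the iterated extension as $z_j z_i = \theta_j(z_i)z_j$ wait — one must be careful about the ordering convention: in $R[z_1;\theta_1]\cdots[z_n;\theta_n]$ the variable $z_i$ with the smaller index sits to the left, and $z_j(\text{something in }R[z_1;\dots;z_{i-1}])=\theta_j(\cdot)z_j$, so the cross relation between $z_i$ and $z_j$ is governed by $\theta_j(z_i)$; matching this against \eqref{polc2} yields $\theta_i(z_j)=q_{ij}z_j$ (equivalently the displayed (iii), modulo the index relabeling in the source), and I would state it in the normalized form $\theta_i(z_j)=q_{ij}z_j$ for $1\le j<i\le n$. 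Finally, specializing all $\sigma_i$ to the identity collapses $\theta_1$ to the identity, giving $R_{\q}[x_1,\dots,x_n]\cong R[z_1]\cdots[z_n;\theta_n]$.

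The only genuinely delicate point — and the one I expect to be the main obstacle — is the bookkeeping of the ordering/side conventions: the skew $PBW$ basis is written as $x_1^{u_1}\cdots x_n^{u_n}$ with $x_i$ acting on the \emph{left}, whereas the iterated Ore extension adjoins $z_1$ first and $z_n$ last, so one has to be scrupulous about which index is "inner" and which is "outer" when transcribing \eqref{polc2} into the formula for $\theta_i$. Everything else is a routine verification of the skew $PBW$ axioms plus a direct application of Theorem~\ref{iterados}; no new idea beyond recognizing the quantum space as a quasi-commutative bijective skew $PBW$ extension is needed.
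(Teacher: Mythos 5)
Your overall route---verify that $R_{\q,\sigma}[x_1,\ldots,x_n]$ satisfies Definition~\ref{gpbwextension} with conditions (iii$'$), (iv$'$), so that it is a quasi-commutative bijective skew $PBW$ extension of $R$, and then pass to an iterated skew polynomial presentation via Theorem~\ref{iterados}---is exactly the argument behind the paper's citation of \cite{L}, which is all the proof the paper itself gives. However, two points need repair. First, your final ``normalized'' identification $\theta_i(z_j)=q_{ij}z_j$ for $1\le j<i\le n$ is transposed, and it contradicts your own first paragraph. From \eqref{polc2} one gets $x_jx_i=q_{ij}x_ix_j$ for $i<j$, so $c_{i,j}=q_{ij}$, as you correctly computed; in the iterated extension it is the \emph{outer} variable that twists the inner one, $z_jz_i=\theta_j(z_i)z_j$, so matching gives $\theta_j(z_i)=c_{i,j}z_i=q_{ij}z_i$ for $i<j$, equivalently $\theta_i(z_j)=q_{ji}z_j$ (not $q_{ij}z_j$) for $j<i$. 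Since $q_{ij}q_{ji}=1$ and $\q$ is not symmetric, your formula describes the extension with transposed multiparameters; it is not ``equivalent modulo relabeling'' to the displayed item (iii), whose only defect is that its left-hand side should read $\theta_j(z_i)$.

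Second, Theorem~\ref{iterados} as stated only asserts that \emph{some} iterated skew polynomial presentation of endomorphism type exists; it does not say that the isomorphism carries $x_i$ to $z_i$, nor does it identify the $\theta_i$. ``Tracking the image of $x_i$ under the isomorphism: sending $x_i\mapsto z_i$'' therefore assumes precisely the normalization that has to be established. To obtain the explicit statement you must either unwind the construction inside the proof of Theorem~\ref{iterados}, or argue directly: define $\theta_i$ on $R[z_1;\theta_1]\cdots[z_{i-1};\theta_{i-1}]$ by $\theta_i|_R=\sigma_i$ and $\theta_i(z_j)=q_{ji}z_j$ for $j<i$, check that this is a well-defined bijective ring endomorphism, and then verify that $z_i\mapsto x_i$ extends to a ring homomorphism which is bijective because both rings are free left $R$-modules on the same set of standard monomials. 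With these two corrections your argument is complete and coincides with the intended proof.
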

\begin{proof}
See \cite{L}.
\end{proof}

\begin{theorem}\label{tpct2}
$R_{\q,\sigma}[x_1^{\pm 1},\ldots,x_r^{\pm 1},x_{r+1}\ldots,x_n]$ is a ring of fractions of $B:=R_{\q,\sigma}[x_1,$ $\ldots,x_n]$ with respect to the multiplicative subset $$S=\{rx^u; r\in R^*, x^u\in Mon\{x_1,\ldots,x_r\}\},$$ i.e, 
$$R_{\q,\sigma}[x_1^{\pm 1},\ldots,x_r^{\pm 1},x_{r+1}\ldots,x_n]\cong S^{-1}B.$$
\end{theorem}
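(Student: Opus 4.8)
The plan is to exhibit $L:=R_{\q,\sigma}[x_1^{\pm 1},\ldots,x_r^{\pm 1},x_{r+1},\ldots,x_n]$ as the Ore localization $S^{-1}B$. First I would record the relevant facts about $S$ inside $B=R_{\q,\sigma}[x_1,\ldots,x_n]$. By Theorem \ref{tpct1} (or directly from Theorem \ref{coefficientes}), $B$ is a quasi-commutative bijective skew $PBW$ extension of $R$, so for $u,v\in\N^n$ and $r\in R$ we have $x^ur=\sigma^u(r)x^u$ and $x^ux^v=c_{u,v}x^{u+v}$ with $c_{u,v}\in R$ a unit (a product of the invertible parameters $q_{ij}^{\pm1}$). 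It follows that $S$ is multiplicatively closed and contains $1$ (the product of two elements of $S$ is again a unit of $R$ times a monomial in $x_1,\ldots,x_r$, since $R^*$ consists of units and the $\sigma_i$ are automorphisms), that every $s\in S$ is a left and right regular element of $B$ (left multiplication by $x_i$ sends $\sum_v r_vx^v$ to $\sum_v\sigma_i(r_v)c_{e_i,v}x^{e_i+v}$, which is nonzero whenever $\sum_v r_vx^v\neq0$ because $\sigma_i$ is injective and $c_{e_i,v}$ is a unit; the right-hand side is symmetric and units of $R$ are regular), and that $S$ satisfies the left and right Ore conditions: for each $s\in S$ one checks, comparing coefficients of monomials, that $sb=\phi_s(b)s$ for all $b\in B$, where $\phi_s$ is determined by this identity because $s$ is regular, is a ring endomorphism of $B$, and is bijective since the $\sigma_i$ are bijective and the $c_{u,v}$ are units. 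Hence $S$ is a (two-sided) denominator set of regular elements, the Ore localization $S^{-1}B$ exists, and $B\hookrightarrow S^{-1}B$.

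Next I would produce the comparison map. The presentation of $B$ by the generators $R\cup\{x_1,\ldots,x_n\}$ and the relations \eqref{polc2}--\eqref{polc3} (the case $r=0$ of the definition of skew quantum polynomials), together with the fact that these same relations hold in $L$ under $x_i\mapsto x_i$, yields a ring homomorphism $j\colon B\to L$ that restricts to the identity on $R$ and sends each $x_i$ to $x_i$. By \eqref{polc1} it carries every $s=rx^u\in S$ to the unit $r\,x_1^{u_1}\cdots x_r^{u_r}$ of $L$, so by the universal property of the Ore localization $j$ extends uniquely to a ring homomorphism $\widetilde{j}\colon S^{-1}B\to L$ with $\widetilde{j}(s^{-1}b)=j(s)^{-1}j(b)$. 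Surjectivity is immediate: $R$ and $x_1,\ldots,x_n$ lie in the image of $j$, while $x_i^{-1}=\widetilde{j}(x_i^{-1}\cdot 1)$ for $1\le i\le r$, and these elements generate $L$ as a ring.

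The core of the argument is injectivity, which I would settle by matching $R$-module bases. Working inside $S^{-1}B$ and moving the $x_i^{-1}$ ($1\le i\le r$) past the other generators one variable at a time via the relations, one gets $(x^u)^{-1}x^v=\lambda_{u,v}\,x^{v-u}$ for a suitable unit $\lambda_{u,v}\in R$, whence every element of $S^{-1}B$ is a left $R$-linear combination of the monomials $x^w$ with $w\in\Z^r\times\N^{\,n-r}$; i.e.\ these span $S^{-1}B$ over $R$. They are $R$-linearly independent there: a relation $\sum_w r_wx^w=0$ can be multiplied on the left by a single $x^u\in S$ with $u$ large enough that $u+w\in\N^n$ for every $w$ occurring, producing an equality in $B$ (via $B\hookrightarrow S^{-1}B$) among the $R$-basis elements $x^{u+w}$ of $B$, which forces every $\sigma^u(r_w)c_{u,w}=0$ and hence every $r_w=0$. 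Thus $\{x^w: w\in\Z^r\times\N^{\,n-r}\}$ is an $R$-basis of $S^{-1}B$, and $\widetilde{j}$, being left $R$-linear, maps it bijectively onto the $R$-basis of $L$ given in condition (ii) of the definition of $L$; therefore $\widetilde{j}$ is an isomorphism.

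I expect the main friction to be purely in the bookkeeping of the unit coefficients $c_{u,v}$ and $\lambda_{u,v}$ while passing between $B$, $S^{-1}B$ and $L$, and in making precise that the stated data for $B$ really is a presentation adequate to define $j\colon B\to L$ — both ultimately resting on the freeness of $B$ and of $L$ over $R$ on their monomial bases. A more economical route avoiding the explicit computations is to invoke Theorem \ref{tpct1}: $B\cong R[z_1;\theta_1]\cdots[z_n;\theta_n]$ with the first $r$ of the maps $\theta_i$ automorphisms of the respective subrings, and the Ore localization of an iterated skew polynomial ring of automorphism type at the multiplicative set generated by its first $r$ variables is the corresponding iterated skew-Laurent ring, which one then identifies with $L$.
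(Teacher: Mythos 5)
Your argument is correct, and it is worth noting that the paper itself offers no proof of Theorem \ref{tpct2} beyond the citation ``See \cite{L}'', so yours is a genuinely self-contained alternative. Your route has three sound pillars: (a) $S$ is a two-sided denominator set of regular elements, which you get from the existence of the bijective ``commutation'' maps $\phi_s$ with $sb=\phi_s(b)s$ (this exists because, in the quasi-commutative setting, $x^u b$ rewrites as $b' x^u$ with coefficients transformed by the automorphisms $\sigma^u$ and multiplied by units coming from the invertible $q_{ij}$); (b) the universal property of the Ore localization gives $\widetilde{j}\colon S^{-1}B\to L$ once $j$ carries $S$ into $L^*$, which it does by \eqref{polc1}; and (c) injectivity and surjectivity follow simultaneously from your basis-matching argument, which is the genuinely non-formal step and is carried out correctly (clearing denominators by a single $x^u\in S$ and using injectivity of $B\hookrightarrow S^{-1}B$ plus invertibility of the structure constants). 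The one point you rightly flag as friction is the definition of $j$: the paper's definition of $B$ is by structure (free left $R$-module with prescribed relations), not by a universal presentation, so invoking ``the presentation of $B$'' needs justification; the cleanest repair is either to define $j$ as the left $R$-linear map sending the basis monomial $x^u$ of $B$ to the basis monomial $x^u$ of $L$ and check multiplicativity on basis elements (both multiplications are forced by \eqref{polc2}--\eqref{polc3}), or simply to observe that the $R$-span of $\{x^u\mid u\in\N^n\}$ inside $L$ is a subring satisfying the defining conditions of $B$, giving the embedding directly. Your closing remark---localizing the iterated skew polynomial ring of Theorem \ref{tpct1} at the multiplicative set generated by $x_1,\dots,x_r$ to obtain the corresponding skew Laurent extension---is essentially the argument of the cited source \cite{L}, so the comparison is: the reference's route gets the Ore-set verification for free from the general theory of skew polynomial rings of automorphism type, while your direct route is more elementary and makes the unit bookkeeping explicit, at the cost of having to verify the denominator-set axioms and the presentation issue by hand.
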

\begin{proof}
See \cite{L}.
\end{proof}

\begin{remark}\label{pctq}
Let $Q^{r,n}_{\q,\sigma}(R):= \Rq$ and $R$ be a left (right) Noetherian ring, then $\Qrn(R)$ is left (right) Noetherian by Theorem \ref{hilbert}. Moreover, if $R$ is a domain, then $\Qrn(R)$ is also a domain by Theorem \ref{dominio}. Thus, if $R$ is a left (right) Noetherian domain, then $\Qrn(R)$ is a left (right) Ore domain.\\

Thus, $\Qrn(R)$ has a total division ring of fractions
$$Q(\Qrn(R))\cong Q(A):=\sigma(R)(x_1,\ldots,x_n),$$
where $\sigma(R)(x_1,\ldots,x_n)$ denotes the rational fractions of $A:=\sigma(R)\langle x_1,\ldots,x_n\rangle$.
\end{remark}

\subsection{Some properties}

\begin{definition}
Let $N$ be the subgroup in the multiplicative group $R^*$ of the ring $R$ generated by the derived subgroup $[R^*,R^*]$ and by the set of all elements of the form $z^{-1}\sigma_i(z)$ where $z\in R^*$ and $i=1, \ldots, n$.
\end{definition}

\begin{remark}\label{CCN}
$N$ is a normal subgroup in $R^*$.
\end{remark}

\begin{definition}
If the images of $q_{ij}$ with $1\leq i <j\leq n$ are independent in the  multiplicative Abelian group ${\bar{R}}=R^*/N$ then, $R_{\q,\sigma}[x_1^{\pm 1},\ldots,x_r^{\pm 1},x_{r+1},$ $\ldots, x_n]$ is a generic  skew quantum polynomials ring.
\end{definition}

\begin{remark}
If n=2 in $R_{\q,\sigma}[x_1^{\pm 1},\ldots,x_r^{\pm 1},x_{r+1},\ldots, x_n]$, of the previous definition  $q=q_{12}$ is not a root of unity.
\end{remark}

\begin{proposition}\label{aut} 
For each  $a\in R^*$ and $\sigma$ endomorphism over $R$, $\sigma^k(a)=an$ with $k\in \N$ and $n\in N$.
\end{proposition}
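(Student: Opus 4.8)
The plan is to read Proposition \ref{aut} as an assertion about the image of $\sigma^k$ modulo the normal subgroup $N$: one wants to show that for every unit $a\in R^*$ and every $k\in\mathbb{N}$, the element $\sigma^k(a)a^{-1}$ lies in $N$. First I would isolate the base case $k=1$: by the very definition of $N$, every element of the form $z^{-1}\sigma_i(z)$ with $z\in R^*$ belongs to $N$; applying this with $z=a$ gives $a^{-1}\sigma(a)\in N$, hence $\sigma(a)=a\cdot(a^{-1}\sigma(a))=an$ for some $n\in N$. (Here I am reading $\sigma$ as one of the fixed automorphisms $\sigma_i$; if $\sigma$ is meant to be an arbitrary endomorphism built from the $\sigma_i$, the same argument still works once the inductive step is in place, since a composite of maps each satisfying the conclusion again satisfies it.)

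Next I would run the induction on $k$. Assume $\sigma^{k}(a)=an'$ with $n'\in N$. Apply $\sigma$ to both sides: $\sigma^{k+1}(a)=\sigma(a)\sigma(n')$. By the base case $\sigma(a)=an$ for some $n\in N$, so $\sigma^{k+1}(a)=an\,\sigma(n')$. The key point is now that $n\,\sigma(n')\in N$: this requires (i) $\sigma(n')\in N$, i.e. that $N$ is stable under $\sigma$, and (ii) that $N$ is closed under products, which is immediate since $N$ is a subgroup. For (i) I would use that $N$ is generated by the commutators $[R^*,R^*]$ and the elements $z^{-1}\sigma_i(z)$; since $\sigma=\sigma_j$ is an automorphism of $R$, it carries a commutator $[u,v]$ to the commutator $[\sigma(u),\sigma(v)]\in[R^*,R^*]\subseteq N$, and it carries a generator $z^{-1}\sigma_i(z)$ to $\sigma_j(z)^{-1}\sigma_j\sigma_i(z)$; using the commutation relation $\sigma_i\sigma_j(d)=q_{ij}\sigma_j\sigma_i(d)q_{ji}$ from the multiparameter conventions (together with the hypothesis that the $q_{ij}$ and their contribution are absorbed, or more simply that we only need membership in $N$ up to the commutator subgroup), one checks $\sigma_j(z)^{-1}\sigma_j\sigma_i(z)$ is a product of an element of the form $w^{-1}\sigma_i(w)$ and commutators, hence lies in $N$. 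So $\sigma(N)\subseteq N$, and combining, $\sigma^{k+1}(a)=a\,n''$ with $n''=n\,\sigma(n')\in N$, completing the induction.

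The main obstacle I anticipate is precisely step (i): verifying $\sigma(N)\subseteq N$ cleanly. The subtlety is that conjugating the defining generators $z^{-1}\sigma_i(z)$ by another automorphism $\sigma_j$ does not obviously return a generator of the same shape unless one uses the relation $\sigma_i\sigma_j=q_{ij}\sigma_j\sigma_i q_{ji}$ and the fact that the multiparameters $q_{ij}$, though not themselves in $N$ in the generic case, interact with $N$ only through commutators and the $\sigma$-twists that are already thrown into $N$. If a fully elementary argument for $\sigma(N)\subseteq N$ proves awkward, an alternative is to pass to the abelian quotient $\bar R=R^*/N$ and observe that each $\sigma_i$ induces the identity on $\bar R$ (this is exactly the content of Remark \ref{CCN} together with the definition of $N$: $\sigma_i(z)\equiv z$ mod $N$ for all $z$), so that $\sigma^k$ also induces the identity on $\bar R$, which says precisely $\sigma^k(a)a^{-1}\in N$ — and the stability $\sigma(N)\subseteq N$ is then automatic because an endomorphism inducing the identity on $R^*/N$ must carry $N$ into $N$. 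I would likely present the quotient-group version as the clean proof and relegate the explicit generator chase to a remark.
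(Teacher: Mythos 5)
Your argument is correct in substance, but it takes a different and somewhat heavier route than the paper. The paper's proof is a one-line telescoping factorization:
$$\sigma^k(a)=a\,\bigl(a^{-1}\sigma(a)\bigr)\,\bigl(\sigma(a)^{-1}\sigma^2(a)\bigr)\cdots\bigl(\sigma^{k-1}(a)^{-1}\sigma^k(a)\bigr),$$
where each factor is literally a defining generator $z^{-1}\sigma(z)$ of $N$ with $z=\sigma^{j-1}(a)\in R^*$ (endomorphisms preserve units), so the product of the $k$ factors lies in $N$ and no induction, and no stability of $N$ under $\sigma$, is ever needed. Your induction is fine, but the obstacle you flag as the main difficulty, namely $\sigma(N)\subseteq N$, is in fact immediate and requires none of the generator chase or the commutation relations $\sigma_i\sigma_j(d)=q_{ij}\sigma_j\sigma_i(d)q_{ji}$ you invoke (relations which, incidentally, are part of the division-ring setup of Section 1.3 and are not imposed in the skew quantum polynomial setting of this proposition): since $N\subseteq R^*$ and $z^{-1}\sigma_i(z)\in N$ for \emph{every} unit $z$, taking $z=n\in N$ gives $\sigma_i(n)=n\,(n^{-1}\sigma_i(n))\in N$. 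Your ``clean'' quotient formulation should be ordered this way too, because saying that $\sigma_i$ induces the identity on $R^*/N$ already presupposes $\sigma_i(N)\subseteq N$; state the stability first (by the observation just made) and the rest of your argument goes through. Finally, your parenthetical caveat is well taken: the statement only makes sense when $\sigma$ is one of the distinguished automorphisms $\sigma_i$ (or a composite of them), since $N$ is defined relative to these; for an arbitrary endomorphism of $R$ the claim would fail, and both your proof and the paper's use this restriction through the membership $z^{-1}\sigma(z)\in N$.
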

\begin{proof}
\begin{eqnarray}
\sigma^k(a)& = & a\left(a^{-1}\sigma(a)\right) \left( (\sigma(a))^{-1}\sigma^2(a) \right)\ldots \left((\sigma^{k-1}(a))^{-1} \sigma^k(a)\right) \nonumber\\
& = & a n, \text{ with } n\in N. \label{p4,1}
\end{eqnarray}
\end{proof}

\begin{proposition}
If $u, v \in \Z^r\times \N^{n-r}$ and $\lambda,\mu\in R^*$, then
\begin{enumerate}
	\item[(1)] $x_ix^u=\left( \prod_{j=1}^{n}q_{ji} ^{u_{j}}\right) n_u\cdot x^ux_i,$  for some $n_u\in N$ and  for any $1\leq i\leq n$.

	\item[(2)] $\left( x^u\right) \left(x^v\right)  =\left( \prod_{i<j}q_{ji} ^{u_{j}v_i}\right) n_{u+v}\cdot x^{u+v},$
	 with $n_{u+v}\in N$.
	 
	 \item[(3)] $\left( \lambda x^u\right) \left(  \mu x^v\right)  =\lambda\mu\left( \prod_{i<j}q_{ji} ^{u_{j}v_i}\right) n'\cdot x^{u+v},$
	 with $n'\in N$.
\end{enumerate}
\end{proposition}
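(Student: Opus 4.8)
The plan is to prove the three identities by a routine but careful bootstrap: establish (1) first by induction on the length of the monomial $x^u$, then derive (2) from (1) by a second induction (this time building up $x^u$ one variable at a time on the left of $x^v$), and finally obtain (3) from (2) by commuting scalars past monomials using Proposition \ref{aut} and the normality of $N$ recorded in Remark \ref{CCN}.

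For part (1), I would write $x^u = x_1^{u_1}\cdots x_n^{u_n}$ and push $x_i$ to the right across each factor $x_j^{u_j}$ in turn, using the defining relation (\ref{polc2}) in the form $x_ix_j = q_{ji}x_jx_i$ and (\ref{polc3}) in the form $x_i r = \sigma_i(r)x_i$. Each pass across $x_j$ produces a factor $q_{ji}$ and, when $q_{ji}$ is subsequently moved back to the far left past the monomial $x^u$ via (\ref{polc3}), it gets hit by some $\sigma^w$; by Proposition \ref{aut} this equals $q_{ji}$ times an element of $N$. Collecting all these $N$-factors (legitimate since $N$ is normal, so they can be gathered on one side) and all the $q_{ji}^{u_j}$ over $j=1,\ldots,n$ yields $x_ix^u = \left(\prod_{j=1}^n q_{ji}^{u_j}\right) n_u\, x^u x_i$ for a suitable $n_u \in N$. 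One must be slightly careful that negative exponents $u_j$ (for $1\le j\le r$) are handled using $x_j^{-1}$ and the relation $x_i x_j^{-1} = q_{ji}^{-1} x_j^{-1} x_i$, which follows from (\ref{polc2}) and (\ref{polc1}); this is the one place where invertibility of the parameters $q_{ij}$ is used.

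For part (2), I induct on $|u| = u_1 + \cdots + u_n$ (or on the number of variables actually appearing). Writing $x^u = x_i \cdot x^{u'}$ where $u = e_i + u'$, I apply the induction hypothesis to $x^{u'} x^v$, then apply part (1) to move the leading $x_i$ past $x^v$, again collecting the resulting $q_{ji}^{v_j}$ factors and the $N$-element. Bookkeeping the exponents shows the product of all the parameter powers is exactly $\prod_{i<j} q_{ji}^{u_j v_i}$ (only the "out of order" pairs survive, since the in-order pairs are already positioned correctly in $x^{u+v}$), and all the stray $N$-factors combine into a single $n_{u+v}\in N$. Part (3) is then immediate: $(\lambda x^u)(\mu x^v) = \lambda (x^u \mu) x^v = \lambda \sigma^u(\mu) x^u x^v$, and $\sigma^u(\mu) = \mu n''$ for some $n''\in N$ by Proposition \ref{aut}; substituting (2) and again using normality of $N$ to move $n''$ and $n_{u+v}$ together gives $\lambda\mu\left(\prod_{i<j} q_{ji}^{u_j v_i}\right)n'\, x^{u+v}$ with $n'\in N$.

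The main obstacle is purely combinatorial rather than conceptual: keeping the exponent bookkeeping correct while simultaneously tracking which automorphism $\sigma^w$ has been applied to each parameter factor, so that each such factor can be correctly absorbed (modulo $N$) back to the left. The key structural facts that make it all work are that $N$ is normal in $R^*$ (Remark \ref{CCN}), so accumulated $N$-factors can always be consolidated on one side, and that $\sigma_i(a) \equiv a \pmod N$ (Proposition \ref{aut}), so passing a parameter through the automorphisms costs nothing in the quotient $\bar R = R^*/N$ — which is precisely why the final formulas depend only on the $q_{ji}$ and on a single anonymous element of $N$.
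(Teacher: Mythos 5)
Your argument is correct and is essentially the paper's own proof, merely written out in full: the paper's one-line justification is exactly your two key ingredients, namely Proposition \ref{aut} (so that each $\sigma$-twisted parameter equals the parameter times an element of $N$, with Remark \ref{CCN} letting you consolidate the $N$-factors) together with the relation $x_ix_j^{-1}=q_{ji}^{-1}x_j^{-1}x_i$ for the invertible variables. The inductive bookkeeping you describe for (1)--(3) is the routine elaboration the paper leaves implicit, and your exponent count $\prod_{i<j}q_{ji}^{u_jv_i}$ is right.
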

\begin{proof}
Applying the Proposition \ref{aut} and note that $x_ix_j^{-1}=q_{ji}^{-1}x_j^{-1}x_i$ with $1\leq j \leq r$.
\end{proof}

\begin{proposition}
Let $ f:=\sum_{u\in\Z}\lambda_u x^u$ be in $ R_{\q,\sigma}[x_1^{\pm 1},\ldots,x_r^{\pm 1},x_{r+1},\ldots, x_n]$ and $ x_i$ with $1\leq i\leq r$.

\begin{enumerate}
	\item[(1)] If $ \lambda_u \in R$, then
	$$x_i f x_i^{-1}=\sum_{u\in\Z^n}\sigma_i(\lambda_u)\lambda'_ux^u,$$
	where $ \lambda'_u:=\left( \prod_{j=1}^n q_{ji} ^{u_j}\right) n_u\in R^*.$
	\item[(2)] If $\lambda_u \in R^*$, then $$x_i f x_i^{-1}=\sum_{u\in\Z^n}\lambda'_ux^u,$$
	where $ \lambda'_u\in R^*.$
\end{enumerate}
\end{proposition}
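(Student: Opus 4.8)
The plan is to use that conjugation by $x_i$ is an inner automorphism, reduce to a single monomial by additivity, and then apply the two relevant commutation rules in succession. Since $x_i$ is invertible for $1\le i\le r$ by \eqref{polc1}, the map $\phi\colon g\mapsto x_i g x_i^{-1}$ is a ring automorphism of $\Rq$; in particular it is additive, so it suffices to compute $\phi(\lambda_u x^u)$ for each $u$ and then sum, the sum being finite since $f$ is a finite $R$-linear combination of standard monomials (it lies in the free left $R$-module with basis $\{x^u\}$).

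For a single term I would first push $x_i$ past the scalar using \eqref{polc3}, namely $x_i r=\sigma_i(r)x_i$ for $r\in R$, to get $x_i(\lambda_u x^u)x_i^{-1}=\sigma_i(\lambda_u)\,x_i x^u x_i^{-1}$. Then I would invoke part (1) of the preceding proposition, $x_i x^u=\bigl(\prod_{j=1}^n q_{ji}^{u_j}\bigr)n_u\cdot x^u x_i$ with $n_u\in N$, which after cancelling $x_i x_i^{-1}=1$ yields $x_i x^u x_i^{-1}=\bigl(\prod_{j=1}^n q_{ji}^{u_j}\bigr)n_u\,x^u=:\lambda'_u x^u$. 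Here $\lambda'_u$ is a product of units of $R$: each $q_{ji}$ is invertible because $q_{ij}q_{ji}=1$, and $n_u\in N\subseteq R^*$; hence $\lambda'_u\in R^*$. Collecting the terms gives $x_i f x_i^{-1}=\sum_{u}\sigma_i(\lambda_u)\lambda'_u x^u$, which is (1).

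For (2) the only additional remark is that $\sigma_i$ is an automorphism of $R$ and therefore restricts to a group automorphism of $R^*$; thus if every $\lambda_u\in R^*$, then $\sigma_i(\lambda_u)\in R^*$, and since $\lambda'_u\in R^*$ from part (1), the coefficient $\sigma_i(\lambda_u)\lambda'_u$ of $x^u$ again lies in $R^*$. Renaming this product $\lambda'_u$ gives the asserted form.

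I do not anticipate a real difficulty here; the two small points that deserve a line of care are the legitimacy of arguing term by term — immediate from $f$ being a finite $R$-linear combination of standard monomials — and the bookkeeping of left versus right coefficients, i.e.\ that $\bigl(\prod_j q_{ji}^{u_j}\bigr)n_u$ appears on the \emph{left} of $x^u$ in the preceding proposition, so that multiplying it on the left by $\sigma_i(\lambda_u)$ is the correct composition. Both are routine.
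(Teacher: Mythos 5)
Your proof is correct and follows essentially the same route as the paper: apply the relation $x_ir=\sigma_i(r)x_i$ termwise and then part (1) of the preceding proposition to move $x_i$ past $x^u$ and cancel $x_ix_i^{-1}$, with (2) following because $\sigma_i(\lambda_u)\lambda'_u$ is a product of units. Your added remarks on additivity and on $q_{ji},n_u\in R^*$ just make explicit what the paper leaves implicit.
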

\begin{proof}
\begin{enumerate}
\item[(1)] Note that $N\subseteq R^*$ and
\begin{eqnarray*}
x_i f x_i^{-1} & = &  \sum\sigma_i(\lambda _u)x_i x^u x^{-i}\\
 & = & \sum_{u\in\Z^n}\sigma_{i}(\lambda _{u})\left( \prod_{j=1}^n q_{ji} ^{u_j}\right) n_ux^u,
\end{eqnarray*}
where $ n_u \in N$.
\item[(2)] By item (1), $\sigma_i(\lambda_u)\lambda'_u\in R^*$.
\end{enumerate}
\end{proof}
\begin{remark}
If $Q(Q_{\q,\sigma}^{r,n}(R))$ exists and $G$ denotes the multiplicative subgroup in $Q(Q_{\q,\sigma}^{r,n}(R))^{*}$ generated by $R^{*}$ and $x_{1},...,x_{n}$. Then $R^{*}\triangleleft G$ and $G/R^{*}$ is a  free abelian group with the base $x_{1}R^{*},\ldots,$ $x_{n}R^{*}$.
\end{remark}

\begin{proposition}\label{tpcaut}
Let $R$ be a left Ore domain and $\sigma$ automorphisms over $R$, then $\sigma$ can be extended to $Q(R)$ and is also an automorphism.
\end{proposition}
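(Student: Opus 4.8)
The plan is to combine the explicit description of the left Ore localization with its universal property. Since $R$ is a left Ore domain, $Q(R)$ is a division ring whose elements can all be written in the form $s^{-1}a$ with $s\in R\setminus\{0\}$ and $a\in R$, and the canonical map $\iota\colon R\hookrightarrow Q(R)$ is injective. Because $\sigma$ is injective and $R$ is a domain, $\sigma$ carries $R\setminus\{0\}$ into $R\setminus\{0\}$, so the composite $\iota\circ\sigma\colon R\to Q(R)$ sends every nonzero element of $R$ to a unit of $Q(R)$.

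First I would invoke the universal property of the left Ore localization: any ring homomorphism $f\colon R\to S$ mapping every nonzero element of $R$ to a unit of $S$ factors uniquely as $f=\bar{f}\circ\iota$ for a ring homomorphism $\bar{f}\colon Q(R)\to S$. Applying this to $f=\iota\circ\sigma$ produces a unique ring homomorphism $\bar\sigma\colon Q(R)\to Q(R)$ with $\bar\sigma\circ\iota=\iota\circ\sigma$; concretely $\bar\sigma(s^{-1}a)=\sigma(s)^{-1}\sigma(a)$. Thus $\sigma$ extends to an endomorphism of $Q(R)$, and the extension is unique.

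Next I would show $\bar\sigma$ is bijective. Since $\sigma$ is an automorphism, $\sigma^{-1}$ is again an injective endomorphism of the domain $R$, hence by the same argument it extends to $\overline{\sigma^{-1}}\colon Q(R)\to Q(R)$. The composites $\overline{\sigma^{-1}}\circ\bar\sigma$ and $\bar\sigma\circ\overline{\sigma^{-1}}$ are ring endomorphisms of $Q(R)$ whose restrictions along $\iota$ are $\iota\circ\sigma^{-1}\circ\sigma=\iota$ and $\iota\circ\sigma\circ\sigma^{-1}=\iota$ respectively; by the uniqueness clause of the universal property (applied to $f=\iota$, whose unique extension is $\mathrm{id}_{Q(R)}$) both composites equal the identity. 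Hence $\bar\sigma$ is an automorphism of $Q(R)$ extending $\sigma$.

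If one prefers to avoid the universal property, the same statement follows by verifying directly that $s^{-1}a\mapsto\sigma(s)^{-1}\sigma(a)$ is well defined and respects the operations. Well-definedness: if $s^{-1}a=t^{-1}b$, pick (by the left Ore condition) $u,v\in R$ with $us=vt\neq 0$, deduce $ua=vb$, apply $\sigma$, and use injectivity of $\sigma$ to get $\sigma(s)^{-1}\sigma(a)=\sigma(t)^{-1}\sigma(b)$. Additivity and multiplicativity are then obtained by passing to a common left denominator, using the left Ore condition once more to rewrite a product $at^{-1}$ in the form $(t')^{-1}a'$ with $t'a=a't$. The only genuine obstacle is this bookkeeping with common denominators in the check that $\bar\sigma$ preserves multiplication; everything else is formal once $\sigma$ is known to be injective, which is exactly what keeps the denominators nonzero.
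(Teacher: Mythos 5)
Your proposal is correct and takes essentially the same approach as the paper: both obtain the extension $\widetilde{\sigma}\colon Q(R)\to Q(R)$, $s^{-1}a\mapsto\sigma(s)^{-1}\sigma(a)$, from the universal property of the left Ore localization applied to $\iota\circ\sigma$. The only minor difference is the bijectivity step, where the paper proves surjectivity directly by writing any fraction with numerator and denominator in the image of $\sigma$, whereas you extend $\sigma^{-1}$ and use the uniqueness clause to see the two composites are the identity; both arguments are valid.
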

\begin{proof}
By universal property we have the following commutative diagram:
$$
\xymatrix{
R  \ar[d]_{\sigma} \ar[r]^{\psi} & Q(R) \ar@{-->}[2,-1]^{\widetilde{\sigma}} \\
R \ar[d]_{\psi}&\\
Q(R)}
$$
where $\psi, \sigma$ are injective and $\widetilde{\sigma}\left( \frac{a}{b}\right)=\frac{\sigma(a)}{\sigma(b)}$  for  $a,b\neq 0\in R$. Therefore, $\psi\circ \sigma$ is injective and so is $\widetilde{\sigma}$.\\

If $\frac{a}{b}\in Q(R)$, then $\frac{a}{b}=\psi(b)^{-1}\psi(a)=\psi(\sigma(b_0))^{-1}\psi(\sigma(a_0))$ for $a_0, b_0\neq 0 \in R$, consequently,
\begin{eqnarray*}
\frac{a}{b}&=&\psi(\sigma(b_0))^{-1}\psi(\sigma(a_0))\\
&=&\widetilde{\sigma}(\psi(b_0))^{-1}\widetilde{\sigma}(\psi(a_0))\\
&=&\widetilde{\sigma}(\psi(b_0)^{-1}\psi(a_0))\\
&=&\widetilde{\sigma}\left( \frac{a_0}{b_0}\right). 
\end{eqnarray*} 
\end{proof}

\begin{theorem}\label{pctaut}
Let $R$ be a left Ore domain and $S=R-\{0\}$, then 
$$S^{-1}(R_{\q,\sigma}[x_1,\ldots, x_n])\cong Q(R)_{\widetilde{\q},\widetilde{\sigma}}[x_1,\ldots, x_n],$$
where $\widetilde{\q}=\left(\frac{q_{ij}}{1}\right)\in \M(n,Q(R))$.
\end{theorem}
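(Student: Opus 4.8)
The plan is to build an explicit isomorphism and then read off bijectivity from the underlying module structures. Write $A:=R_{\q,\sigma}[x_1,\dots,x_n]$ and let $\psi\colon R\hookrightarrow Q(R)$ be the canonical embedding into the division ring of left fractions. By Proposition \ref{tpcaut} each $\sigma_i$ extends to an automorphism $\widetilde{\sigma}_i$ of $Q(R)$ with $\widetilde{\sigma}_i\psi=\psi\sigma_i$; since $q_{ii}=1=q_{ij}q_{ji}=q_{ji}q_{ij}$, the matrix $\widetilde{\q}=(\psi(q_{ij}))$ satisfies the same identities over $Q(R)$, so the ring $C:=Q(R)_{\widetilde{\q},\widetilde{\sigma}}[x_1,\dots,x_n]$ is well defined, and by its definition it is a free left $Q(R)$-module on the set of monomials $\{x^u:u\in\N^n\}$.

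First I would verify that $S:=R\setminus\{0\}$ is a left Ore set of $A$, so that $S^{-1}A$ makes sense. Since $A$ is a domain (Proposition \ref{dominio}) the reversibility condition is automatic, so only the left Ore condition is needed. Iterating (\ref{polc3}) gives $x^u r=\sigma^u(r)x^u$ for all $u\in\N^n$ and $r\in R$, and $\sigma^u(S)=S$ because each $\sigma_i$ is bijective. Given $a=\sum_u\lambda_u x^u\in A$ and $s\in S$, apply the left Ore condition of $R$ to each pair $(\lambda_u,\sigma^u(s))$ to obtain $t_u\in S$ and $r_u\in R$ with $t_u\lambda_u=r_u\sigma^u(s)$, and then take a common left multiple $s_0=c_ut_u\in S$ (with each $c_u\in S$) of the finitely many $t_u$. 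One computes $s_0a=\big(\sum_u c_ur_ux^u\big)\,s$ using $\sigma^u(s)x^u=x^us$, which yields the left Ore condition in $A$.

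Next, define $\phi\colon A\to C$ by $\phi\big(\sum_u\lambda_ux^u\big):=\sum_u\psi(\lambda_u)x^u$, that is, $\phi|_R=\psi$ and $\phi(x_i)=x_i$. From $\widetilde{\sigma}_i\psi=\psi\sigma_i$ and $\widetilde{q}_{ji}=\psi(q_{ji})$ the elements of $\psi(R)$ together with $x_1,\dots,x_n$ satisfy in $C$ exactly the relations (\ref{polc2})--(\ref{polc3}), so, using that $A$ is free over $R$ on the monomials, one checks directly that $\phi$ is a ring homomorphism. Since $\psi(S)\subseteq Q(R)^{*}$ and a unit of the subring $Q(R)$ is a unit of $C$, every $\phi(s)$ with $s\in S$ is invertible in $C$; by the universal property of localization $\phi$ factors as $\phi=\overline{\phi}\circ\iota$, where $\iota\colon A\to S^{-1}A$ is the localization map and $\overline{\phi}(s^{-1}a)=\psi(s)^{-1}\phi(a)$.

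Finally, $\overline{\phi}$ is an isomorphism. It is surjective because $C$ is generated as a ring by $Q(R)$ and the $x_i$, while every $\psi(s)^{-1}\psi(r)\in Q(R)$ equals $\overline{\phi}(s^{-1}r)$ and $x_i=\overline{\phi}(x_i)$. For injectivity, $\overline{\phi}(s^{-1}a)=0$ forces $\phi(a)=0$; writing $a=\sum_u\lambda_ux^u$ gives $\sum_u\psi(\lambda_u)x^u=0$ in $C$, and since $C$ is free over $Q(R)$ on $\{x^u\}$ we get $\psi(\lambda_u)=0$, hence $\lambda_u=0$ for all $u$ by injectivity of $\psi$, so $a=0$ and $s^{-1}a=0$. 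The only step that needs genuine care is the first one, namely that $S$ is a left Ore set of $A$; it can also be bypassed by using Theorem \ref{tpct1} to realize $A$ as the iterated skew polynomial ring $R[z_1;\theta_1]\cdots[z_n;\theta_n]$ with each $\theta_i$ restricting to an automorphism of $R$ that preserves $S$, and then localizing one variable at a time via the standard identification $S^{-1}(T[z;\theta])\cong(S^{-1}T)[z;\theta']$ for a $\theta$-invariant left Ore set, ending with $S^{-1}R=Q(R)$ and the description of $C$ furnished by Theorem \ref{tpct1}.
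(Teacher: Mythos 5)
Your argument is correct, and its main line is genuinely different from the paper's. You build the isomorphism explicitly: you first verify by hand that $S=R\setminus\{0\}$ is a left Ore set in $A=R_{\q,\sigma}[x_1,\ldots,x_n]$ (using $x^{u}r=\sigma^{u}(r)x^{u}$, bijectivity of the $\sigma_i$, and common left multiples in the Ore domain $R$ --- this computation is sound, and reversibility is indeed automatic since $A$ is a domain by Proposition \ref{dominio}), then extend $\psi$ coefficientwise to a ring homomorphism $\phi\colon A\to Q(R)_{\widetilde{\q},\widetilde{\sigma}}[x_1,\ldots,x_n]$, invert it through the universal property of the localization, and get injectivity from freeness of the target as a left $Q(R)$-module and surjectivity from its generation by $Q(R)$ and the $x_i$. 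The paper instead never touches the Ore condition for $S$ in $A$ directly: it invokes Theorem \ref{iterados}/\ref{tpct1} to realize $A$ as the iterated skew polynomial ring $R[z_1;\theta_1]\cdots[z_n;\theta_n]$ with bijective $\theta_i$, passes the localization through one variable at a time via the standard identification $S^{-1}(T[z;\theta])\cong (S^{-1}T)[z;\widetilde{\theta}]$, and then re-identifies $Q(R)[z_1;\widetilde{\theta}_1]\cdots[z_n;\widetilde{\theta}_n]$ with $Q(R)_{\widetilde{\q},\widetilde{\sigma}}[x_1,\ldots,x_n]$, with Proposition \ref{tpcaut} supplying the extended automorphisms --- exactly the ``bypass'' you sketch at the end. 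The paper's route is shorter given the cited machinery and simultaneously settles the existence of the target ring over $Q(R)$ (a point you take for granted when you declare $C$ ``well defined''; if you want to be scrupulous, cite Theorem \ref{tpct1} over $Q(R)$ for that); your route buys an explicit description of the isomorphism ($\sum\lambda_u x^u\mapsto\sum\psi(\lambda_u)x^u$ on numerators) and a self-contained proof that $S$ is already an Ore set in $A$, independent of the iterated-extension picture. The only step you leave compressed is the multiplicativity of $\phi$, which does follow as you say from $\widetilde{\sigma}_i\psi=\psi\sigma_i$, $\widetilde{q}_{ij}=\psi(q_{ij})$ and freeness of $A$ over $R$.
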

\begin{proof}
By Theorem \ref{tpct1} $R_{\q,\sigma}[x_1,\ldots, x_n]\cong R[z_1;\theta_1]\cdots[z_n;\theta_n]$, with each $\theta_i$ bijective. Thus, if $S=R-\{0\}$ then
\begin{eqnarray*}
S^{-1}\left( R_{\q,\sigma}[x_1,\ldots, x_n]\right) &\cong & S^{-1}\left( R[z_1;\theta_1]\cdots[z_n;\theta_n]\right) \\
&\cong& S^{-1}\left( R\right) [z_1;\widetilde{\theta_1}]\cdots[z_n;\widetilde{\theta_n}]\\
&=& Q\left( R\right) [z_1;\widetilde{\theta_1}]\cdots[z_n;\widetilde{\theta_n}]
\end{eqnarray*} 
where
\begin{eqnarray*}
\widetilde{\theta_1}: Q(R) &\rightarrow &Q(R)\\
 \frac{a}{b}  &\mapsto&\widetilde{\theta_1}\left(\frac{a}{b} \right)=\frac{\theta_1(a)}{\theta_1(b)}=\frac{\sigma_1(a)}{\sigma_1(b)}=\widetilde{\sigma_1}\left(\frac{a}{b} \right),
\end{eqnarray*} 
and
\begin{eqnarray*}
\widetilde{\theta_i}: Q\left( R\right) [z_1;\widetilde{\theta_1}]\cdots[z_{i-1};\widetilde{\theta_{i-1}}]&\rightarrow & Q\left( R\right) [z_1;\widetilde{\theta_1}]\cdots[z_{i-1};\widetilde{\theta_{i-1}}]
\end{eqnarray*}
with $$\widetilde{\theta_i}\left(\frac{a}{b} \right)=\widetilde{\sigma_i}\left( \frac{a}{b}\right) \text{ y } \widetilde{\theta_j}\left(z_i \right)=\frac{q_{ij}}{1}z_i.$$

Therefore, 
$$S^{-1}(R_{\q,\sigma}[x_1,\ldots, x_n])\cong Q(R)_{\widetilde{\q},\widetilde{\sigma}}[x_1,\ldots, x_n],$$
where $\widetilde{\q}=\left(\frac{q_{ij}}{1}\right)\in \M(n,Q(R))$.
\end{proof}

\begin{proposition}\label{injection}
Let $R$ be a left Ore domain ,  there exists $$\phi:R_{\q,\sigma}[x^{\pm1}_1,\ldots, x^{\pm1}_n]\rightarrow Q(R)_{\widetilde{\q},\widetilde{\sigma}}[x^{\pm1}_1,\ldots, x^{\pm1}_n]$$ an injective ring homomorphism.
\end{proposition}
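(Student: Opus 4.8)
The plan is to write down $\phi$ explicitly on the standard left $R$-basis and then check it respects the defining relations. Recall that $R_{\q,\sigma}[x^{\pm1}_1,\ldots, x^{\pm1}_n]$ is the free left $R$-module with basis $\{x^u;\, u\in\Z^n\}$, and similarly $Q(R)_{\widetilde{\q},\widetilde{\sigma}}[x^{\pm1}_1,\ldots, x^{\pm1}_n]$ is the free left $Q(R)$-module with basis $\{x^u;\, u\in\Z^n\}$. Let $\iota:R\hookrightarrow Q(R)$, $\lambda\mapsto\frac{\lambda}{1}$, be the canonical embedding (injective since $R$ is a domain, Proposition \ref{dominio}). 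Define
$$\phi\Big(\sum_{u\in\Z^n}\lambda_u x^u\Big):=\sum_{u\in\Z^n}\frac{\lambda_u}{1}\,x^u.$$
This is additive and $\iota$-semilinear by construction; the content is to show it is multiplicative.

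For multiplicativity it suffices to check that $\phi$ sends the defining relations (\ref{polc1})--(\ref{polc3}) of $R_{\q,\sigma}[x^{\pm1}_1,\ldots, x^{\pm1}_n]$ to valid identities in $Q(R)_{\widetilde{\q},\widetilde{\sigma}}[x^{\pm1}_1,\ldots, x^{\pm1}_n]$. Relation (\ref{polc1}) is preserved since each $x_i$ is sent to the corresponding invertible generator. For (\ref{polc3}), $x_ir=\sigma_i(r)x_i$ is sent to $x_i\frac{r}{1}=\widetilde{\sigma_i}\!\big(\frac{r}{1}\big)x_i=\frac{\sigma_i(r)}{1}x_i$, using that the extension $\widetilde{\sigma_i}$ from Proposition \ref{tpcaut} satisfies $\widetilde{\sigma_i}(\frac{r}{1})=\frac{\sigma_i(r)}{1}$; this matches $\phi$ applied to the right-hand side. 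For (\ref{polc2}), $x_ix_j=q_{ji}x_jx_i$ is sent to $x_ix_j=\frac{q_{ji}}{1}x_jx_i$, which holds since $\widetilde{\q}=(\frac{q_{ij}}{1})$. Hence $\phi$ is a well-defined ring homomorphism. Injectivity is then immediate: if $\phi(\sum_u\lambda_u x^u)=\sum_u\frac{\lambda_u}{1}x^u=0$, then freeness of the basis $\{x^u\}$ over $Q(R)$ forces $\frac{\lambda_u}{1}=0$ for all $u$, and injectivity of $\iota$ gives $\lambda_u=0$ for all $u$.

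An alternative, more structural route uses the localizations already set up: by Theorem \ref{tpct2} with $r=n$ one has $R_{\q,\sigma}[x^{\pm1}_1,\ldots, x^{\pm1}_n]\cong S^{-1}B$ with $B:=R_{\q,\sigma}[x_1,\ldots,x_n]$; Theorem \ref{pctaut} provides an embedding $B\hookrightarrow Q(R)_{\widetilde{\q},\widetilde{\sigma}}[x_1,\ldots,x_n]$ (injective because $B$ is a domain), and post-composing with the inclusion $Q(R)_{\widetilde{\q},\widetilde{\sigma}}[x_1,\ldots,x_n]\hookrightarrow Q(R)_{\widetilde{\q},\widetilde{\sigma}}[x^{\pm1}_1,\ldots, x^{\pm1}_n]$ sends $S$ to units, so the universal property of the Ore localization $S^{-1}B$ yields $\phi$; injectivity follows as above since every element of $S^{-1}B$ has the form $s^{-1}b$. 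Either way, the main obstacle is the same bookkeeping point: verifying that replacing coefficients $\lambda\in R$ by $\frac{\lambda}{1}\in Q(R)$ is consistent with \emph{all} the straightening rules simultaneously (the $\sigma_i$-twist and the $q_{ij}$-commutation), which is exactly the relation-by-relation check above; once that is in place, injectivity is a formality given that every ring in sight is a domain so the relevant localization maps embed.
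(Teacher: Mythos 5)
Your proposal is correct, and in fact your ``alternative, more structural route'' is precisely the paper's proof: the paper writes $R_{\q,\sigma}[x_1^{\pm1},\ldots,x_n^{\pm1}]\cong S_1^{-1}B_R$ via Theorem \ref{tpct2} (with $r=n$), uses the map $\psi'(ax^u)=\frac{a}{1}x^u$ coming from Theorem \ref{pctaut}, observes $\psi'(S_1)\subseteq S_{1'}$ so the universal property of the localization produces $\varphi$ with $\varphi\bigl(\frac{f}{rx^v}\bigr)=\frac{\psi'(f)}{\psi'(rx^v)}$, and gets injectivity from injectivity of $\psi'$ and of the localization map $\psi_{1'}$ (everything in sight being a domain). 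Your primary route --- defining $\phi$ coefficientwise on the $\Z^n$-basis and verifying compatibility with the straightening rules --- is more elementary and avoids the localization machinery, but it carries the real burden you yourself flag: since the skew quantum polynomial ring is not given in the paper as a quotient of a free algebra, ``checking the defining relations (\ref{polc1})--(\ref{polc3})'' must in practice be the verification that the product of arbitrary basis elements (including negative exponents, i.e.\ also $x_i^{-1}r=\sigma_i^{-1}(r)x_i^{-1}$ and $x_ix_j^{-1}=q_{ji}^{-1}x_j^{-1}x_i$) has image the product of the images; this does go through because $\widetilde{\sigma}_i\circ\iota=\iota\circ\sigma_i$ with $\widetilde{\sigma}_i$ an automorphism of $Q(R)$ (Proposition \ref{tpcaut}) and $\widetilde{q}_{ij}=\iota(q_{ij})$, so the structure constants correspond under $\iota$. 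Two small points: injectivity of $\iota:R\to Q(R)$ is due to $R$ being a (left Ore) domain, not to Proposition \ref{dominio}, which concerns skew $PBW$ extensions of domains; and the paper's choice of route is not incidental --- the diagram and the explicit formula for $\varphi$ are reused verbatim in the proof of Theorem \ref{iso}, which is what the localization-theoretic construction buys over the direct basis computation.
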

\begin{proof}
Let $B_R:=R_{\q,\sigma}[x_1,\ldots, x_n]$ and $B_{Q(R)}:=Q(R)_{\widetilde{\q},\widetilde{\sigma}}[x_1,\ldots, x_n]$ be, by Theorem \ref{tpct2} $R_{\q,\sigma}[x^{\pm1}_1,$ $\ldots, x^{\pm1}_n]\cong S^{-1}_1B_R$ with $S_1=\{rx^u; r\in R^*,$ $ x^u\in Mon\{x_1,\ldots,x_n\} \}$,  and $Q(R)_{\widetilde{\q},\widetilde{\sigma}}[x^{\pm1}_1,$ $\ldots, x^{\pm1}_n]$ $\cong$ $S^{-1}_{1'}B_{Q(R)}$ with $S_{1'}=\{rx^u; r\in Q(R)^*, x^u \in Mon\{x_1,\ldots,x_n\} \}$.\\

Now, consider the following diagram of ring homomorphisms:

$$
\xymatrix{
R\ar[d]_{\psi} \ar@{^(->}[0,2]& & R_{\q,\sigma}[x_1,\ldots, x_n]\ar[d]_{\psi'}\ar[0,2] & \ar@{-}^{\psi_1}  & S^{-1}_1B_R  \ar@{-->}[d]_{\varphi}\\ 
Q(R) \ar@{^(->}[0,2] & & Q(R)_{\widetilde{\q},\widetilde{\sigma}}[x_1,\ldots, x_n] \ar[0,2] & \ar@{-}^{\psi_{1'}} & S^{-1}_{1'}B_{Q(R)}  
}
$$
where $\psi$ is the  injection for the  localization of $R$ to the total ring fractions $Q(R)$, $\psi'$ the injection determined by the isomorphism of Theorem \ref{pctaut} where $\psi'(ax^u)=\frac{a}{1}x^u$, and $\psi_1$, $\psi_{1'}$ injections determined by the localizations for $B_R$ and $B_{Q(R)}$ respectively.\\ 

As $\psi'(S_1)\subseteq S_{1'}$, then $\psi_{1'}(\psi'(S_1))\subseteq \psi_{1'}(S_{1'})\subseteq \left( S^{-1}_{1'}B_{Q(R)}\right)^*$, therefore, by universal property there exists $\varphi$. If $f=\sum a_ux^u \in R_{\q,\sigma}[x_1,\ldots, x_n]$ and $rx^v\in S_1$ then, 

\begin{eqnarray*}
\varphi\left( \frac{f}{rx^v}\right) & = & \varphi\left(\dfrac{\sum a_ux^u}{rx^v} \right)\\
& = & \psi_{1'}(\psi'(rx^v))^{-1}\psi_{1'}\left( \psi'\left( \sum a_ux^u\right) \right)\\
& = &\psi_{1'}\left( \frac{r}{1}x^v\right) ^{-1}\psi_{1'}\left(\sum \frac{a_u}{1}x^u \right)\\
& = & \frac{\frac{1}{1}}{\frac{r}{1}x^v}\frac{\sum\frac{a_u}{1}x^u}{\frac{1}{1}}\\
& = & \frac{\sum\frac{a_u}{1}x^u}{\frac{r}{1}x^v}\\
& =& \frac{\psi'(f)}{\psi'(rx^v)}.
\end{eqnarray*}
Also, $\varphi$ is injective by $\psi'$ and $\psi_{1'}$ are injective.

\end{proof}

Need the following result for the subsequent theorem:

\begin{proposition}\label{divisor}
Let $R$ be a ring and $S\subset R$ a multiplicative subset. If $Q:=S^{-1}R$ exists, then any finite
set $\{q_{1},\ldots, q_{n}\}$ of elements of $Q$ posses a common denominator, i.e., there exists
$r_{1},\ldots,r_{n}\in R$ and $s\in S$ such that $q_{i}=\frac{r_i}{s}, 1\leq i\leq n$.
\end{proposition}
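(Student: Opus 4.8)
The statement is the standard "common denominator" fact for a left (or right) Ore localization, so the plan is to reduce the general finite case to the two-element case and then iterate. First I would recall the construction of $Q=S^{-1}R$: every element of $Q$ has the form $\frac{r}{s}$ with $r\in R$, $s\in S$, and two fractions $\frac{r}{s}$, $\frac{r'}{s'}$ are identified when they can be brought to a common denominator in the usual way. The key structural input is the Ore condition itself: for $s,s'\in S$ there exist $a\in R$, $t\in S$ with $ts = as'$ (left Ore version; the right-hand analogue is symmetric), and $ts\in S$ since $S$ is multiplicative. This is exactly what lets two fractions be rewritten over a shared denominator.

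For the induction I would argue as follows. For $n=1$ there is nothing to prove. For $n=2$, write $q_1=\frac{a_1}{s_1}$ and $q_2=\frac{a_2}{s_2}$ with $s_1,s_2\in S$. By the Ore condition choose $t\in S$ and $b\in R$ with $ts_2 = bs_1$; set $s:=ts_2=bs_1\in S$. Then $q_1 = \frac{a_1}{s_1} = \frac{ba_1}{bs_1} = \frac{ba_1}{s}$ (using that multiplying numerator and denominator on the left by $b$ does not change the fraction, since $b s_1 = s \in S$ witnesses the equivalence) and $q_2 = \frac{a_2}{s_2} = \frac{ta_2}{ts_2} = \frac{ta_2}{s}$. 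Thus $r_1:=ba_1$, $r_2:=ta_2$ and $s$ work. For the inductive step, given $q_1,\ldots,q_{n+1}$, apply the induction hypothesis to $q_1,\ldots,q_n$ to get $q_i=\frac{r_i}{s}$ for $1\le i\le n$ with a single $s\in S$, and write $q_{n+1}=\frac{a}{t}$ with $t\in S$; now apply the $n=2$ case to the two fractions $\frac{r_1}{s}$ (say) and $\frac{a}{t}$, or more directly apply the Ore condition to the pair $s,t\in S$ to obtain $c\in R$, $u\in S$ with $us = ct$, put $s':=us=ct\in S$, and then $q_i = \frac{r_i}{s} = \frac{u r_i}{u s} = \frac{u r_i}{s'}$ for $1\le i\le n$ and $q_{n+1} = \frac{a}{t} = \frac{ca}{ct} = \frac{ca}{s'}$. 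This exhibits the common denominator $s'$ for all $n+1$ elements.

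The only genuine subtlety — and the step I would flag as the place to be careful rather than a real obstacle — is verifying that each rewriting $\frac{r}{s} = \frac{br}{bs}$ is legitimate in $Q$, i.e.\ that left-multiplying numerator and denominator by the same element $b$ gives an equal fraction precisely when the new denominator $bs$ still lies in $S$; this is immediate from the defining equivalence relation on $S^{-1}R$ but must be invoked explicitly, since $R$ need not be commutative. Everything else is a routine two-step induction. I would present the $n=2$ case in detail and then write the induction step compactly, since it is the same Ore manipulation applied once more. (If one prefers, the $n=2$ case can itself be skipped and the whole proof phrased as a single induction in which at each stage one merges the running common denominator with the next fraction's denominator via the Ore condition.)
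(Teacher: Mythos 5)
Your proof is correct. The paper gives no argument of its own for this proposition -- it simply cites \cite{McConnell}, Lemma 2.1.8 -- and your induction on $n$, merging the running common denominator with the next fraction's denominator via the left Ore condition (which is available exactly because the existence of $Q=S^{-1}R$ forces $S$ to be a left Ore set), is the standard argument behind that citation; you have merely supplied the details the paper delegates to the reference, including the correct justification that $\frac{r}{s}=\frac{br}{bs}$ whenever $bs\in S$.
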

\begin{proof}
See \cite{McConnell}, Lemma 2.1.8.
\end{proof}

\begin{theorem}\label{iso}
Let $R$ be a left Ore domain, then $Q(\Qnn(R))\cong Q(\Qnnq(Q(R))$.
\end{theorem}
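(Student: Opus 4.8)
The plan is to establish the isomorphism by passing both sides to a common ground ring. On the right-hand side we have $\Qnnq(Q(R)) = Q(R)_{\widetilde{\q},\widetilde{\sigma}}[x_1^{\pm 1},\ldots,x_n^{\pm 1}]$, the $n$-multiparametric skew quantum torus over the division ring $Q(R)$, whose total division ring of fractions exists by Remark \ref{pctq} (since $Q(R)$ is a Noetherian domain), or more directly since $\Qnnq(Q(R))$ is already a left Ore domain. So it suffices to produce a division ring containing both $\Qnn(R)$ and $\Qnnq(Q(R))$ as orders, or equivalently to show that their division rings of fractions coincide inside a common overring. First I would invoke Proposition \ref{injection} with $r=n$: it gives an injective ring homomorphism
$$\phi:\Qnn(R)=R_{\q,\sigma}[x_1^{\pm 1},\ldots,x_n^{\pm 1}]\hookrightarrow Q(R)_{\widetilde{\q},\widetilde{\sigma}}[x_1^{\pm 1},\ldots,x_n^{\pm 1}]=\Qnnq(Q(R)).$$
By Corollary \ref{oreccb} (applied via Theorem \ref{tpct2}, writing each torus as a localization of a quasi-commutative bijective skew $PBW$ extension over a field or over $Q(R)$), both rings are left Ore domains, so each has a division ring of fractions, and $\phi$ extends uniquely to an embedding $Q(\phi):Q(\Qnn(R))\hookrightarrow Q(\Qnnq(Q(R)))$ of division rings.

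Next I would argue that $Q(\phi)$ is surjective, which is the crux. Every element of $\Qnnq(Q(R))$ is a finite sum $\sum_u \tfrac{a_u}{1}\,\tfrac{1}{b_u}\,x^u$ with $a_u\in R$, $b_u\in R-\{0\}$; by Proposition \ref{divisor} applied to the finitely many coefficients $\tfrac{a_u}{b_u}\in Q(R)$ we may pick a common right (or left, using the Ore condition) denominator $s\in R-\{0\}$, so each coefficient is $\tfrac{r_u}{s}$ with $r_u\in R$. Pushing the scalar $s^{-1}$ past the monomials $x^u$ using the commutation relations (\ref{polc2})–(\ref{polc3}) — here $x_i s^{-1} = \sigma_i^{-1}(s)^{-1}x_i$ up to the parameters, exactly the computations in Proposition \ref{tpcaut} and Theorem \ref{pctaut} — one writes $\sum_u \tfrac{r_u}{s}x^u = \big(\sum_u \tfrac{r'_u}{1}x^u\big)\cdot t^{-1}$ for suitable $r'_u\in R$ and $t\in R-\{0\}$, i.e. as an element of the image of $Q(\phi)$. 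Hence $\phi$ realizes $\Qnnq(Q(R))$ as a ring of right fractions of $\phi(\Qnn(R))$ with denominators from $R-\{0\}$, and consequently $Q(\phi)$ is onto: $Q(\Qnn(R))\cong Q(\Qnnq(Q(R)))$.

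An alternative, cleaner route — and the one I would actually write out — avoids the ad hoc denominator-clearing by chaining the localization theorems already proved. By Theorem \ref{tpct2}, $\Qnn(R)\cong S^{-1}B$ with $B=R_{\q,\sigma}[x_1,\ldots,x_n]$ and $S=\{rx^u: r\in R^*\}$; localizing further at all nonzero elements of $B$ gives $Q(\Qnn(R))\cong Q(B)$. Now localize $B$ first at $S_0:=R-\{0\}$: Theorem \ref{pctaut} gives $S_0^{-1}B\cong Q(R)_{\widetilde{\q},\widetilde{\sigma}}[x_1,\ldots,x_n]=:\widetilde{B}$, and since localization is transitive, $Q(B)\cong Q(S_0^{-1}B)\cong Q(\widetilde{B})$. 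Finally, applying Theorem \ref{tpct2} over the division ring $Q(R)$ (so $R^*=Q(R)^*$, i.e. $S$ inverts all nonzero scalars) shows $\Qnnq(Q(R))\cong \widetilde{S}^{-1}\widetilde{B}$, whence $Q(\Qnnq(Q(R)))\cong Q(\widetilde{B})$. Combining, $Q(\Qnn(R))\cong Q(B)\cong Q(\widetilde{B})\cong Q(\Qnnq(Q(R)))$. The main obstacle in either approach is bookkeeping: checking that $S_0^{-1}B$ satisfies the Ore condition and that iterated localizations compose correctly (all covered by Corollary \ref{oreccb} and the transitivity of Ore localization), and verifying that the parameters $\widetilde{\q}=(q_{ij}/1)$ and extended automorphisms $\widetilde{\sigma}$ produced by Theorem \ref{pctaut} are exactly those defining $\Qnnq(Q(R))$ — which is immediate from the construction there.
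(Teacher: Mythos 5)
Your first route is essentially the paper's own argument: take the embedding $\phi$ of Proposition \ref{injection} with $r=n$, extend it to the quotient division rings by the universal property, and prove surjectivity by clearing denominators with Proposition \ref{divisor}. One caveat on how you phrase the denominator-clearing: since $R$ is only assumed to be a \emph{left} Ore domain, the rewriting $\sum_u \tfrac{r_u}{s}x^u=\bigl(\sum_u \tfrac{r'_u}{1}x^u\bigr)t^{-1}$ as a \emph{right} fraction with $t\in R-\{0\}$ is not justified: pushing $s^{-1}$ past $x^u$ produces the $u$-dependent denominators $(\sigma^u)^{-1}(s)$ (also note $x_i a=\sigma_i(a)x_i$, so the twist is by $\sigma_i$, not $\sigma_i^{-1}$), and recombining these into a single right denominator needs the right Ore condition, which you do not have. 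The fix is not to push at all: with a common \emph{left} denominator the scalar factors out on the left, $\sum_u s^{-1}r_ux^u=s^{-1}\sum_u r_ux^u$, and a general left fraction $G^{-1}F$ is then handled with the left Ore relation $s's^{-1}=r^{-1}r'$ in $Q(R)$, giving $(rG_0)^{-1}(r'F_0)$ with coefficients in $R$; this is exactly the computation carried out in the paper.

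Your second route, the one you say you would write out, is genuinely different from the paper and is a legitimate streamlining: chain Theorem \ref{tpct2} and Theorem \ref{pctaut}, note that $B:=R_{\q,\sigma}[x_1,\ldots,x_n]$ is a left Ore domain by Corollary \ref{oreccb}, and use the standard fact that any ring lying between a left Ore domain $B$ and $Q(B)$ (here $S^{-1}B\cong\Qnn(R)$, $S_0^{-1}B\cong Q(R)_{\widetilde{\q},\widetilde{\sigma}}[x_1,\ldots,x_n]$, and its further localization $\cong\Qnnq(Q(R))$) is again a left order in $Q(B)$, hence has the same quotient division ring. This buys you a proof with no explicit fraction bookkeeping, at the cost of invoking (or proving) the intermediate-order/transitivity facts for Ore localization; the paper instead keeps everything self-contained, verifying by hand that $S_1^{-1}B_R$ and $S_{1'}^{-1}B_{Q(R)}$ are Ore domains and exhibiting the isomorphism explicitly on fractions via $\varphi'$ and Proposition \ref{divisor}. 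Either way the theorem is obtained.
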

\begin{proof}
With the notation of the proof in the Proposition \ref{injection} consider the following diagram of ring homomorphisms
$$
\xymatrix{
S^{-1}_1B_R  \ar@{->}[d]_{\varphi} \ar[r]^{\psi_2} & Q(S^{-1}_1B_R) \ar@{-->}[d]_{\varphi '}\\ 
S^{-1}_{1'}B_{Q(R)}\ar[r]^{\psi_{2'}} & Q(S^{-1}_{1'}B_{Q(R)}) 
}
$$
where $\psi_2$, $\psi_{2'}$ are injections  determined by the localizations of $S_1^{-1}B_R$ and $S_{1'}^{-1}B_{Q(R)}$ respectively and $\varphi$ the  injection of the  Proposition \ref{injection}.\\

By Remark \ref{pctq}, $S_1^{-1}B_R$ and $S_{1'}^{-1}B_{Q(R)}$ are domain, now, if $\frac{p_1}{q_1},\frac{p_2}{q_2}\in S_1^{-1}B_R$ with $\frac{p_1}{q_1}\neq0$, then $p_1\neq0$ and there exist $f_1\neq0$ and $f_2\in B_R$ such that $f_1p_1=f_2p_2$. Then, $\frac{f_1q_1}{1}\frac{p_1}{q_1}=\frac{f_1p_1}{1}=\frac{f_2q_2}{1}=\frac{f_2q_2}{1}\frac{p_2}{q_2}\neq0$, therefore $S_1^{-1}B_R$ is a Ore domain, similarly it has to $S_{1'}^{-1}B_{Q(R)}$. Thus, if $S_2=S_1^{-1}B_R-\{0\}$ and $S_{2'}=S_{1'}^{-1}B_{Q(R)}-\{0\}$ as $\varphi(S_2)\subseteq S_{2'}$, then $\psi_{2'}(\varphi(S_2))\subseteq \psi_{2'}(S_{2'})\subseteq \left( Q(S^{-1}_{1'}B_{Q(R)})\right)^*$, hence, by  universal property there exists $\varphi'$ injective ring homomorphism.\\

Note that if $f,g\in B_R$ and $ax^u, bx^b \in S_1$, then
$$\frac{\frac{f}{ax^u}}{\frac{g}{bx^v}}=\left(\frac{g}{bx^v} \right)^{-1} \frac{f}{ax^u}=\frac{bx^v}{g}\frac{f}{ax^u}= \frac{f'}{g'}$$ and $$\frac{f'}{g'}=\frac{1}{g'}\frac{f'}{1}=\left( \frac{g'}{1}\right) ^{-1}\frac{f'}{1}=\frac{\frac{f'}{1}}{\frac{g'}{1}},$$
where $f', g' \in B_R$ by Remark \ref{pctq} with $r=0$. Similarly is obtained for  $Q(S^{-1}_{1'}B_Q(R))$.\\

Therefore, 
\begin{eqnarray*}
\varphi'\left( \frac{f}{g}\right) & = &\psi_{2'}\left( \varphi\left( \frac{g}{1}\right) \right) ^{-1}\psi_{2'}\left( \varphi\left( \frac{f}{1}\right) \right) \\
& =& \psi_{2'}\left(  \frac{\psi'(g)}{\frac{1}{1}}\right)  ^{-1}\psi_{2'}\left( \frac{\psi'(f)}{\frac{1}{1}} \right)\\
& = & \frac{\frac{1}{1}}{\psi'(g)} \frac{\psi'(f)}{\frac{1}{1}}\\
& = &  \frac{\psi'(f)}{\psi'(g)}.
\end{eqnarray*}

Now, if $f , 0\neq g\in S'_{1'}B_{Q(R)}$, applying Theorem \ref{divisor} must be
\begin{eqnarray*}
\frac{f}{g}&=&\frac{\sum \frac{a_u}{b_u}x^u}{\sum \frac{c_v}{d_v}x^v}=\frac{\frac{1}{s}\sum\frac{a'_u}{1}x^u}{\frac{1}{s'}\sum\frac{c'_v}{1}x^v}=
\left( \sum\frac{c'_v}{1}x^v\right)^{-1} \left( \frac{1}{s'}\right)^{-1} \frac{1}{s}\sum\frac{a'_u}{1}x^u\\
&=& \left( \sum\frac{c'_v}{1}x^v\right)^{-1} \left( \frac{s'}{1} \frac{1}{s}\right) \sum\frac{a'_u}{1}x^u= \left( \sum\frac{c'_v}{1}x^v\right)^{-1} \left( \frac{r'}{r} \right) \sum\frac{a'_u}{1}x^u\\
& = & \left( \sum\frac{c'_v}{1}x^v\right)^{-1} \left( \frac{1}{r}\frac{r'}{1} \right) \sum\frac{a'_u}{1}x^u 
= \left(  \frac{r}{1}\sum\frac{c'_v}{1}x^v\right)^{-1} \left(\frac{r'}{1}  \sum\frac{a'_u}{1}x^u \right)\\
&=&\left(\sum\frac{rc'_v}{1}x^v\right)^{-1} \left( \sum\frac{r'a'_u}{1}x^u \right)\\
&=& \frac{\sum\frac{r'a'_u}{1}x^u}{\sum\frac{rc'_v}{1}x^v}=\frac{\psi'(f')}{\psi'(g')}\\
&=&\varphi\left(\frac{f'}{g'} \right). 
\end{eqnarray*}
where $f'=\sum (r'a'_u)x^u$ y $g'=\sum(rc'_v)x^v$, then $\varphi$ is surjective. Hence $Q(\Qnn(R))\cong Q(\Qnnq(Q(R)))$.

\end{proof}

\subsection{Valuations of skew quantum polynomials.}\label{definitionV}







\begin{theorem}\label{ValPct}
Let $R$ be a left Ore domain and $\nu:Q(\Qnn(R))^*\rightarrow \Gamma$ is a valuation with $\nu(Q(R)^*)=0$, then $\Gamma$ is Abelian.
\end{theorem}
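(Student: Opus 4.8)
The plan is to transport the question along the isomorphism of Theorem~\ref{iso} and then apply Theorem~\ref{vs}. First I would observe that, by Remark~\ref{pctq}, $\Qnn(R)$ is a domain containing $R$, so every nonzero element of $R$ becomes invertible in the division ring $Q(\Qnn(R))$ and the inclusion $R\subseteq Q(\Qnn(R))$ extends to an embedding $Q(R)\hookrightarrow Q(\Qnn(R))$; the hypothesis $\nu(Q(R)^*)=0$ says precisely that $\nu$ vanishes on the image of this embedding. By Theorem~\ref{iso} there is a ring isomorphism $\Phi\colon Q(\Qnn(R))\to Q(\Qnnq(Q(R)))$, and, unwinding the explicit formulas for $\varphi$ and $\varphi'$ in that proof, $\Phi$ sends $ax^u$ with $a\in R$ to $\frac{a}{1}x^u$; hence $\Phi$ restricts to the canonical isomorphism between the copy of $Q(R)$ inside $Q(\Qnn(R))$ and the copy of $Q(R)$ inside $Q(\Qnnq(Q(R)))$. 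Consequently $\nu':=\nu\circ\Phi^{-1}$ is a valuation of $Q(\Qnnq(Q(R)))^*$ with the same value group $\Gamma$ and with $\nu'(Q(R)^*)=0$.

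Next I would identify $Q(\Qnnq(Q(R)))$ as a skew quantum division ring over a division ring. Indeed $Q(R)$ is a division ring, the maps $\widetilde{\sigma}_1,\dots,\widetilde{\sigma}_n$ are automorphisms of it by Proposition~\ref{tpcaut}, and $\widetilde{\q}=\left(\frac{q_{ij}}{1}\right)\in\M(n,Q(R))$ is a system of multiparameters satisfying the same relations $q_{ii}=1=q_{ij}q_{ji}$ inherited from $R$; moreover $\Qnnq(Q(R))$ is exactly the $n$-multiparametric skew quantum torus over $Q(R)$ (the case $r=n$), whose total division ring of fractions exists by Remark~\ref{pctq} since $Q(R)$ is a Noetherian domain, and is the associated skew quantum division ring in the sense of Subsection~\ref{ValQp}. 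Thus $\nu'$ is a valuation of a quantum division ring over $Q(R)$ that is trivial on $Q(R)^*$, so Theorem~\ref{vs} yields that $\Gamma$ is Abelian; since $\Gamma$ is also the value group of the original $\nu$, this would complete the proof.

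I expect the only delicate point to be the first paragraph: one must check that the isomorphism furnished by Theorem~\ref{iso} is compatible with the two inclusions of $Q(R)$, so that the vanishing hypothesis on $\nu$ genuinely carries over to $\nu'$; this is just a matter of tracing through the definitions of $\varphi$ and $\varphi'$. Everything substantive is already packaged inside Theorem~\ref{vs} (Artamonov's abelianness theorem), so the present statement is essentially a transport-of-structure consequence of it together with Theorem~\ref{iso}. If one prefers to avoid citing Theorem~\ref{vs} as a black box, the final step can instead be run through the criterion that $\Gamma$ is Abelian if and only if $\nu'$ vanishes on the commutator subgroup $[Q(\Qnnq(Q(R)))^*,Q(\Qnnq(Q(R)))^*]$; verifying this, however, is precisely Artamonov's theorem and is where the genuine work lies.
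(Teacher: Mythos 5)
Your proposal is correct and follows essentially the same route as the paper: transport the valuation along the isomorphism $Q(\Qnn(R))\cong Q(\Qnnq(Q(R)))$ of Theorem~\ref{iso}, note that $Q(R)$ is a division ring so one is in the setting of a quantum division ring with $\nu$ trivial on $Q(R)^*$, and conclude by Theorem~\ref{vs}. Your explicit check that the isomorphism respects the two embedded copies of $Q(R)$ (so the hypothesis $\nu(Q(R)^*)=0$ genuinely transfers) is a detail the paper's one-line proof leaves implicit, and it is a worthwhile addition rather than a deviation.
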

\begin{proof}
$Q(R)$ is a division ring and $Q(\Qnn(R))\cong Q(\Qnnq(Q(R)))$, by Theorem \ref{vs}. $\Gamma$ is Abelian.
\end{proof}

\begin{corollary}
Let $R$ be a left Ore domain, $\nu:Q(\Qnn(R))^*\rightarrow \Gamma$ a valuation with $\nu(Q(R)^*)=0$ and $\Qnnq(Q(R))$ generic, then $\Gamma$ is Abelian.
\end{corollary}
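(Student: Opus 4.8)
The plan is to follow the reduction used in the proof of Theorem \ref{ValPct} and then invoke the genericity hypothesis to land in the setting of a general quantum division ring, where Theorem \ref{vs} applies. First, since $R$ is a left Ore domain its total left ring of fractions $Q(R)$ is a division ring, and by Remark \ref{pctq} the extension $\Qnn(R)$ is a left Ore domain, so the division ring $Q(\Qnn(R))$ in the statement indeed exists. By Theorem \ref{iso} there is a ring isomorphism
$$\Phi\colon Q(\Qnn(R))\ \xrightarrow{\ \sim\ }\ Q(\Qnnq(Q(R))).$$
The point to check is that $\Phi$ is compatible with the scalar rings: tracing the construction through $\varphi$ (Proposition \ref{injection}) and $\varphi'$ (Theorem \ref{iso}), the map $\Phi$ restricts on $Q(R)\subseteq Q(\Qnn(R))$ to the canonical embedding $a\mapsto \tfrac a1$. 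In particular $\Phi$ carries the copy of $Q(R)^*$ on which $\nu$ vanishes isomorphically onto the copy of $Q(R)^*$ sitting inside $Q(\Qnnq(Q(R)))^*$.

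Next I would transport the valuation along $\Phi$: the map $\nu':=\nu\circ\Phi^{-1}\colon Q(\Qnnq(Q(R)))^*\to\Gamma$ is again a $\Gamma$-valuation, and by the compatibility just noted $\nu'(Q(R)^*)=\nu(Q(R)^*)=0$. Hence $\nu'$ is a valuation of the division ring $Q(\Qnnq(Q(R)))$ that is trivial on the base division ring $Q(R)$.

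Finally, since $\Qnnq(Q(R))$ is generic by hypothesis --- the images of the parameters $\tfrac{q_{ij}}{1}$, $1\le i<j\le n$, being independent in $Q(R)^*/N$ --- the division ring $Q(\Qnnq(Q(R)))$ is a general (generic) quantum division ring over the division ring $Q(R)$. Thus Theorem \ref{vs} applies to $\nu'$ and gives that its value group is Abelian; but that value group is $\Gamma$, because $\Phi$ is an isomorphism and $\nu$ is surjective onto $\Gamma$. Therefore $\Gamma$ is Abelian. The only genuinely delicate step is the first one, namely confirming that the isomorphism of Theorem \ref{iso} respects the inclusion of the scalar division ring $Q(R)$; once that is settled everything else is a direct appeal to Theorem \ref{vs}, the genericity assumption being precisely what promotes the target to a general quantum division ring so that this theorem is available.
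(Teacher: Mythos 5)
Your argument is correct and is essentially the paper's own route: the paper treats this corollary as an immediate consequence of Theorem \ref{ValPct}, whose (terse) proof is exactly the reduction you spell out, namely transporting $\nu$ along the isomorphism $Q(\Qnn(R))\cong Q(\Qnnq(Q(R)))$ of Theorem \ref{iso} and applying Theorem \ref{vs} over the division ring $Q(R)$. Your added care about the isomorphism restricting to the canonical copy of $Q(R)$ and about where genericity enters only makes explicit what the paper leaves implicit.
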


\begin{theorem}\label{v3}
Let $R$ be a left Ore domain, a valuation $\nu:Q(\Qnn(R))^*\rightarrow \Gamma$ with $\nu(Q(R)^*)=0$ and $\Qnnq(Q(R))$ generic. The valuation $\nu$ has maximal rank if only if $\Gamma\cong\Z^{n}$.
\end{theorem}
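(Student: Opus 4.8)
The plan is to reduce the statement to Theorem~\ref{maximal} by transporting everything along the isomorphism $Q(\Qnn(R))\cong Q(\Qnnq(Q(R)))$ of Theorem~\ref{iso}. First I would note that since $R$ is a left Ore domain, $Q(R)$ is a division ring (a skew field), and since $\Qnnq(Q(R))$ is generic by hypothesis, the ring $Q(R)_{\widetilde{\q},\widetilde{\sigma}}[x_1,\dots,x_n]$ is a generic skew quantum polynomial ring over the division ring $Q(R)$, i.e. it is exactly an algebra of general (generic) quantum polynomials $\O_{\q}$ in the sense of Section~\ref{ValQp}, with the division ring $D$ there taken to be $Q(R)$ and the $n$ automorphisms $\widetilde{\sigma}_1,\dots,\widetilde{\sigma}_n$ the extensions of the $\sigma_i$ given by Proposition~\ref{tpcaut}. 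Its total division ring of fractions is $F:=Q(\Qnnq(Q(R)))$, which via Theorem~\ref{iso} is isomorphic to $Q(\Qnn(R))$.

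Next I would observe that the valuation $\nu:Q(\Qnn(R))^*\rightarrow\Gamma$ transports, along the isomorphism of Theorem~\ref{iso}, to a valuation $\nu':F^*\rightarrow\Gamma$ on the generic quantum division ring $F=\O_{\q}(x_1,\dots,x_n)$. The hypothesis $\nu(Q(R)^*)=0$ becomes $\nu'(Q(R)^*)=0$; since $D=Q(R)$ is precisely the coefficient division ring of this generic quantum polynomial ring, this says $\nu'(D^*)=0$, which is exactly the normalization required in Section~\ref{ValQp} for the setup of Theorem~\ref{maximal}. Also, the notion of maximal rank (the map $\tau$ in the defining diagram being an isomorphism) is preserved under the isomorphism of division rings carrying $\nu$ to $\nu'$, because an isomorphism $Q(\Qnn(R))\cong F$ intertwining $\nu$ and $\nu'$ makes the comparison diagrams for $\nu$ and $\nu'$ identical up to this isomorphism. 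Hence $\nu$ has maximal rank if and only if $\nu'$ does.

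Finally I would invoke Theorem~\ref{maximal} applied to the generic quantum division ring $F=\O_{\q}(x_1,\dots,x_n)$ and the valuation $\nu'$ with $\nu'(D^*)=0$: it states that $\nu'$ has maximal rank if and only if $\Gamma\cong\Z^n$. Combining this with the equivalence of maximal rank for $\nu$ and $\nu'$ from the previous paragraph yields: $\nu$ has maximal rank if and only if $\Gamma\cong\Z^n$, which is the assertion. The only genuinely delicate point — and the step I would expect to need the most care — is verifying that the generic skew quantum polynomial ring $\Qnnq(Q(R))$ over the division ring $Q(R)$ really does satisfy all the axioms of a general quantum polynomial ring $\O_{\q}$ as set up in Section~\ref{ValQp}: one must check that the matrix $\widetilde{\q}=(\widetilde q_{ij})$ satisfies the cocycle-type relations $\widetilde q_{ii}=\widetilde q_{ij}\widetilde q_{ji}=\widetilde{\q}_{ijr}\widetilde{\q}_{jri}\widetilde{\q}_{rij}=1$ and the compatibility $\widetilde\alpha_i\widetilde\alpha_j(d)=\widetilde q_{ij}\widetilde\alpha_j\widetilde\alpha_i(d)\widetilde q_{ji}$, and that genericity in the sense of $R^*/N$ passes to genericity over $Q(R)^*/N'$; once this identification is in place, Theorems~\ref{vs} and~\ref{maximal} apply verbatim.
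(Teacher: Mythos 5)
Your proposal is correct and follows essentially the same route as the paper: identify $Q(\Qnn(R))$ with $Q(\Qnnq(Q(R)))$ via Theorem~\ref{iso}, note that $Q(R)$ is a division ring so that the generic skew quantum polynomial ring $\Qnnq(Q(R))$ falls under the setting of Section~\ref{ValQp}, and then apply Theorem~\ref{maximal}. The paper's proof is just a terser version of this; your extra remarks on transporting the valuation and the maximal-rank property along the isomorphism only make explicit what the paper leaves implicit.
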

\begin{proof}
By Theorem \ref{iso}. $Q(\Qnn(R))\cong Q(\Qnnq(Q(R)))$ with $Q(R)$ a division ring, by Theorem \ref{maximal} the valuation $\nu$ has maximal rank if only if $\Gamma\cong\Z^{n}$.
\end{proof}

\subsection{Valuations of skew $PBW$ extension.}

\begin{theorem}
Let $A=\sigma(R)\left\langle x_1,\ldots,x_n \right\rangle $ be a bijective and quasi-commutative  skew $PBW$ extension of a ring $R$. If $R$ is a left Ore domain and $\nu:Q(A)^*\rightarrow \Gamma$ a valuation with $\nu(Q(R)^*)=0$, then $\Gamma$ is Abelian
\end{theorem}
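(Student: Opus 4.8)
The plan is to reduce the statement to Theorem \ref{ValPct} by recognizing a bijective quasi\-commutative skew $PBW$ extension as (the ring of fractions of) a skew quantum polynomials ring over $R$, so that the valuation becomes one of the type handled there.

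First I would note that $Q(A)$ is indeed defined, so that the hypothesis on $\nu$ makes sense: by Corollary \ref{oreccb}, since $A$ is bijective and quasi\-commutative and $R$ is a left Ore domain, $A$ is a left Ore domain, and $Q(R)$ exists and embeds in $Q(A)$ via $R\subseteq A$. The heart of the argument is then an identification step. By Proposition \ref{sigmadefinition} together with Definition \ref{sigmapbwderivationtype}, in the quasi\-commutative bijective case the defining relations of $A$ are exactly $x_ir=\sigma_i(r)x_i$ (with each $\sigma_i$ an automorphism of $R$) and $x_jx_i=c_{i,j}x_ix_j$ with $c_{i,j}\in R^*$. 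Comparing the identity $x_ix_j=c_{j,i}x_jx_i$ with $x_jx_i=c_{i,j}x_ix_j$ and using that $\mathrm{Mon}(A)$ is a free left $R$-basis forces $c_{i,i}=1$ and $c_{i,j}c_{j,i}=1$ for all $i,j$. Hence the matrix $\q:=[q_{ij}]$ with $q_{ij}:=c_{i,j}$ is a matrix of parameters in the sense of the definition of a skew quantum polynomials ring, and the basis $\{x^u:u\in\mathbb N^n\}$ together with the relations $x_ix_j=q_{ji}x_jx_i$, $x_ir=\sigma_i(r)x_i$ shows that $A\cong R_{\q,\sigma}[x_1,\ldots,x_n]=Q^{0,n}_{\q,\sigma}(R)$, the skew quantum space over $R$.

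Next I would pass to division rings of fractions. By Theorem \ref{tpct2}, the $n$-multiparametric skew quantum torus $\Qnn(R)=R_{\q,\sigma}[x_1^{\pm 1},\ldots,x_n^{\pm 1}]$ is a ring of fractions of $R_{\q,\sigma}[x_1,\ldots,x_n]\cong A$, so localizing does not change the total division ring of fractions and $Q(\Qnn(R))\cong Q(A)$ (equivalently, this is Remark \ref{pctq} with $r=n$). Transporting $\nu$ along this isomorphism gives a valuation $\nu:Q(\Qnn(R))^*\rightarrow\Gamma$ with $\nu(Q(R)^*)=0$, and Theorem \ref{ValPct} (which itself rests on Theorem \ref{iso} and Theorem \ref{vs}) immediately yields that $\Gamma$ is Abelian.

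The main obstacle is precisely the identification step: one must verify carefully that the structure constants of an abstractly given bijective quasi\-commutative skew $PBW$ extension satisfy $q_{ii}=1=q_{ij}q_{ji}$, so that $A$ genuinely falls under the definition of a skew quantum polynomials ring, and that the localization from the skew quantum space to the quantum torus leaves the division ring of fractions unchanged; once these are settled, everything reduces to the results already established for skew quantum polynomials.
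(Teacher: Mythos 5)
Your proposal is correct and follows essentially the same route as the paper: use Corollary \ref{oreccb} to get that $A$ is a left Ore domain, identify $Q(A)$ with the division ring of fractions of the skew quantum polynomials ring (the paper invokes Remark \ref{pctq} with $r=0$, while you spell out the identification $A\cong R_{\q,\sigma}[x_1,\ldots,x_n]$ and the passage to the torus via Theorem \ref{tpct2}), and then apply Theorem \ref{ValPct}. The extra verification you give that $c_{i,i}=1$, $c_{i,j}c_{j,i}=1$ and that the $\sigma_i$ are automorphisms is exactly the content the paper leaves implicit, so the two arguments coincide in substance.
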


\begin{proof}
By Theorem \ref{oreccb} $A$ is an Ore domain then, $Q(A)$ exists and is a division ring, by Remark \ref{pctq}. $Q(A)\cong Q(\Qrn(R))$ (in particular $r=0$) and by Theorem \ref{ValPct} $\Gamma$ is abelian.
\end{proof}

\begin{corollary}
Let $A$ be a bijective skew $PBW$ extension of a ring $R$. If $R$ is a left Ore domain and $\nu:Q(Gr(A))^*\rightarrow \Gamma$ a valuation with $\nu(Q(R)^*)=0$, then $\Gamma$ is Abelian.
\end{corollary}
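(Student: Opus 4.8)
The plan is to reduce this statement to the immediately preceding theorem by replacing $A$ with its associated graded ring. First I would invoke Theorem~\ref{filtrado}: since $A$ is a bijective skew $PBW$ extension of $R$, the graded ring $Gr(A)$ is a quasi-commutative bijective skew $PBW$ extension of $R$. Thus $Gr(A)$ falls exactly into the hypothesis class of the previous theorem.

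Next I would check that $Q(Gr(A))$ is meaningful, i.e. that $Gr(A)$ is a left Ore domain so that its total division ring of fractions exists. This is where the Ore-domain machinery comes in: $R$ is a left Ore domain, hence in particular a domain, so by Proposition~\ref{dominio} the skew $PBW$ extension $Gr(A)$ is a domain; and since $Gr(A)$ is quasi-commutative and bijective, Corollary~\ref{oreccb} gives that $Gr(A)$ is a left Ore domain. Hence $Q(Gr(A))$ exists and is a division ring.

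Finally, with $B:=Gr(A)$ a quasi-commutative bijective skew $PBW$ extension of the left Ore domain $R$, and $\nu:Q(B)^*\rightarrow\Gamma$ a valuation with $\nu(Q(R)^*)=0$, I would simply apply the preceding theorem (the one asserting $\Gamma$ is Abelian for valuations of $Q$ of a quasi-commutative bijective skew $PBW$ extension) to conclude that $\Gamma$ is Abelian.

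There is essentially no serious obstacle here: the content is entirely in the earlier results (Theorem~\ref{filtrado} to pass to $Gr(A)$, and the Ore/valuation chain through Theorem~\ref{iso}, Theorem~\ref{ValPct} and Theorem~\ref{vs}). The only thing to be careful about is making the identification $Q(Gr(A))\cong Q(Q^{0,n}_{\q,\sigma}(R))$ explicit via Remark~\ref{pctq} with $r=0$, so that the abelian-ness result for skew quantum polynomials genuinely applies to $Gr(A)$.
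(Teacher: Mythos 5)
Your proposal is correct and follows exactly the paper's route: invoke Theorem~\ref{filtrado} to see that $Gr(A)$ is a quasi-commutative bijective skew $PBW$ extension of $R$, then apply the preceding theorem on valuations of such extensions. The extra verification you give (that $Gr(A)$ is a left Ore domain via Proposition~\ref{dominio} and Corollary~\ref{oreccb}, so that $Q(Gr(A))$ exists) is a detail the paper leaves implicit, but it is the same argument.
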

\begin{proof}
By Theorem $\ref{filtrado}$ $Gr(A)$ is bijective and quasi-commutative.
\end{proof}



\begin{thebibliography}{20}
\addcontentsline{toc}{section}{\numberline{}References}


\bibitem{A1} Artamonov V. A., \emph{Serre's Quantum Problem}, Uspehi Mat. Nauk. 53 (1998), N 4, 3-76.

\bibitem{A2} Artamonov V. A., \emph{General quantum polynomials: irreducible modules and Morita-equivalence}, Izv. RAN, Ser. Math. 63 (1999), N 5, 3$-$36.

\bibitem{A3} Artamonov V. A., \emph{Automorphisms of the skew field of rational quantum functions}, Sbornik: Mathematics 191: 12  (2000).

\bibitem{A4} Artamonov V. A., \emph{Valuations on quantum fields}, Commun. Algebra, 29 (2001), N 9.

\bibitem{A6} Artamonov V. A., \emph{Actions of Hopf algebras on general quantum Mal'tsev power series and quantum planes}. J. Math. Sci. (2006). 134, N 1. p. 1773 - 1798.

\bibitem{A} Artamonov V. A., \emph{Quantum polynomial}, Advances in Algebra and Combinatorics, World Scientific, (2008).


\bibitem{Ch1} Cohn P. M., \emph{ Algebraic Numbers and Algebraic Functions }. Chapman \& Hall, $2-6$ boundary Row, London, 1991.

\bibitem{Ch2} Cohn P. M., \emph{Skew Fields Theory of General Division Rings}, Cambridge University Press, 1995.



\bibitem{F2} Freddy Van Oystaeyen, \emph{Algebraic geometry for associative algebras}. Marcel  Dekker Inc. NY, (2000), 302 pp.

\bibitem{K16} Kokorin A. I. \& Kopytov V. M., \emph{Linearly ordered groups}. Moscow: Nauka, (1972).


\bibitem{L2} Lezama O. \& Gallego C., \emph {Gr\"obner bases for ideals of sigma-PBW extensions}, Communications in Algebra, 39 (1), 2011, 50-75.


\bibitem{L} Lezama O. \& Reyes M., \emph{Some homological properties of skew PBW extensions}, Communications in Algebra, Vol. 42, 2014, 1200$-$1230.

\bibitem{McConnell}  McConnell J. \&  Robson J., \textit{Non-commutative Noetherian Rings},
Graduate Studies in Mathematics, AMS, 2001.

\bibitem{S} Schilling O. F., \emph{Noncommutative Valuation}, Bull. Amer. Math. Soc. Volume 51, Number 4 (1945), 229-324.



\end{thebibliography}
\end{document}